\numberwithin{equation}{section}
\newcommand{\N}{\mathbb N}
\newcommand{\eps}{\varepsilon}
\newcommand{\dive}{\mathrm{div}\,}
\def\real{\mathbb R}
\def\N{\mathbb N}
\def\R{\mathbb R}
\def\Rn{{\mathbb R}^{n}}
\def\rn{\Rn}
\def\proofof#1{\begin{proof}[Proof of #1]}
\def\d{\mathrm{d}}
\def\Om{\Omega}
\def\E{\mathcal{E}}
\def\dx{\d x}
\def\dz{\d z}
\def\={^{\wedge}}
\def\eqn#1$$#2$${\begin{equation}\label#1#2\end{equation}}
\def\mir1{\mathcal L_1}
\def\Sym{\real^{n\times n}_{\textup{sym}}}
\DeclareMathOperator\spt{spt}
\newtheorem{thm}{Theorem}[section]
\newtheorem{lem}[thm]{Lemma}
\newtheorem{ex}[thm]{Example}
\newtheorem{prop}[thm]{Proposition}
\theoremstyle{definition}
\newtheorem{rem}[thm]{Remark}
\newtheorem{lemalph}{Lemma}
\newcommand{\xint}[3]{{\setbox0=\hbox{$#1{#2#3}{\int}$}
   \vcenter{\hbox{$#2#3$}}\kern-.5\wd0}}
\newcommand{\mint}{\mathchoice
   {\xint\displaystyle\textstyle-}
   {\xint\textstyle\scriptstyle-}
   {\xint\scriptstyle\scriptscriptstyle-}
   {\xint\scriptscriptstyle\scriptscriptstyle-}
   \!\int}
\newbox\tr@tto
\def\medint{\displaystyle\copy\tr@tto\kern-10.4pt\int}
\numberwithin{equation}{section}
\title{Fractional higher differentiability of solutions to strongly nonlinear Stokes systems
}
\author {Andrea Cianchi, Flavia Giannetti, Antonia Passarelli di Napoli \& Christoph Scheven}
\address{Andrea Cianchi, Dipartimento di Matematica e Informatica \lq\lq U. Dini"\\
Universit\`a di Firenze,
Viale Morgagni 67/a,
50134 Firenze,
Italy} \email{andrea.cianchi@unifi.it}
\address{Flavia Giannetti, Dipartimento di Matematica e Applicazioni "R.
Caccioppoli" \\ Universit\`{a} di Napoli ``Federico
II", via Cintia - 80126 Napoli, Italy}
\email{giannett@unina.it}
\address{Antonia Passarelli di Napoli, Dipartimento di Matematica e Applicazioni "R.
Caccioppoli" \\ Universit\`{a} di Napoli ``Federico
II", via Cintia - 80126 Napoli, Italy}
\email{antpassa@unina.it}
\address{Christoph Scheven, Faculty for Mathematics \\ University Duisburg-Essen, D-45117 Essen, Germany}
\email{christoph.scheven@uni-due.de}
\subjclass[2020]{35J47, 46E35}
\keywords{Stationary Stokes systems, symmetric gradient, elliptic systems,
    fractional differentiability, Orlicz spaces, Besov spaces}
\begin{document}

\begin{abstract}  { This work concerns   stationary Stokes type systems governed by a general class of non-necessarily power-type nonlinearities. Fractional regularity properties of the symmetric gradient of local solutions are established, depending on a balance between the nonlinearity of the differential operator and the degree of integrability of the datum on right-hand side. The non-polynomial character of the differential operators  calls for the use of Orlicz and Orlicz-Sobolev spaces as an appropriate functional framework for both the solutions and the datum.
The regularity result  amounts to the membership of
a nonlinear expression of the symmetric gradient
in Besov spaces. Fractional regularity of the pressure term is also exhibited and is formulated in terms of Orlicz-Besov spaces. Fractional Sobolev regularity of the symmetric gradient and of the pressure follow as a consequence.}
Parallel results for the symmetric gradient of local solutions to the associated plain  elliptic system are also offered. A new version of a  Poincar\'e-Sobolev inequality in Orlicz spaces, in modular form, on domains with finite measure plays a role in the proofs.
\end{abstract}
\maketitle

\section{Introduction}\label{sec:intro}
 
  Stokes type systems model the motion of  non-Newtonian fluids, namely
incompressible fluids 
affected  by a nonlinear relation between   the deformation velocity and the deviatoric tensor stress. The stationary version of these systems takes the form
\begin{equation}\label{equa}
    \left\{
  \begin{array}{l}
    -\dive a(x,\E u)+\nabla\pi= f\\[0.6ex]
    \dive u=0,
  \end{array}
  \right.
\end{equation}
  in an open  set  $\Omega \subset \R^n$, with $n \geq 2$. Here, $a:\Omega\times\Sym\to\Sym$ is a function satisfying     ellipticity and growth conditions in the variable  
in $\Sym$,  the space of symmetric $n\times n$ matrices, and $f: \Omega \to \R^n$ is  measurable. The unknowns are the functions 
$u: \Omega \to \R^n$ and $\pi : \Omega \to \R$, and $\E u$ denotes the symmetric gradient of $u$. In the physical problem, $u$ stands for the velocity of the fluid, $\E u$ represents its deformation velocity, $\pi$ denotes  its pressure,  $a$ is the deviatoric tensor stress, and $f$ is an external force.  We refer to \cites{AsMar,BAH} for a detailed description of the pertaining models. 

\par The present paper is concerned with fractional differentiability properties of $\E u$ and $\pi$ for local solutions $(u,\pi)$ to the system \eqref{equa}. 
The major novelties amount to a precise   balance assumption, in the general setting of Orlicz spaces, between the  nonlinearity of    $a(x, \E u)$ in $\E u$ and the degree of integrability of $f$, and to the lack of smoothness of $a(x, \E u)$ in $x$.

The ellipticity and growth of the function $a(x, P)$ in the symmetric gradient variable $P$ and the integrability of the datum $f$ are prescribed in terms of Young functions $\phi$ and $\psi$, respectively,   that are not necessarily powers. This allows for strongly nonlinear behaviors of the differential operator in \eqref{equa}, which need not be of polynomial type.
The Orlicz space $L^\psi_{\rm loc}(\Omega, \Rn)$ and the Orlicz-Sobolev space $W^{1,\phi}_{\rm loc}(\Omega, \Rn)$ thus provide an appropriate functional framework for the datum and the solutions.
A fractional differentiability assumption in the variable $x$, in the spirit of \cite{DevoreSharpley:1984} and \cite{Hajlasz}, is allowed on the function $a$. The regularity of $\E u$ is described via the membership in a  Besov space, of the form $B^{\sigma, 2, \infty}_{\rm loc}(\Omega, \mathbb R^{n\times n})$, of 
the  nonlinear expression  $V(\E u)= \sqrt{\frac{\phi'(|\E u|)}{|\E u|}}\,\E u$. A Besov space of the same kind is also well suited to account for the fractional regularity of $\pi$. 
 The use of the expression $V(\E u)$ is critical in discussing differentiability properties of $\E u$, because of the possible degeneracy of the differential operator in \eqref{equa}. Analogous expressions, defined with 
 $\E u$
 replaced with the full gradient $\nabla u$, are classically employed in
 the analysis of (fractional) higher order differentiability properties of solutions to elliptic and parabolic problems involving
 $\nabla u$ -- see e.g. the monograph \cite{Giusti}  and the more recent contributions   \cites{DDHSW, DieEtt08, GM,  GPdol, GPS0, MaPas, Ming, Pas}.

The results about the function $V(\E u)$ for the Stokes system \eqref{equa} have a parallel version for the  plain symmetric gradient system 
\begin{equation}\label{elliptic}
    -\dive a(x,\E u)= f.
\end{equation}
They just follow via a simplification in the proof for \eqref{equa}, due to a more straightforward choice of test functions, which do not need to obey the divergence free constraint in the weak formulation of \eqref{elliptic}.
 
 Loosely speaking, our conclusions assert that   $V(\E u)$ and, in the case of \eqref{equa}, also $\pi$, locally belong to  suitable Besov spaces, as soon as the datum $f$ lies in an Orlicz space $L^\psi _{\rm loc}(\Omega, \Rn)$ which is smaller, in a qualified sense, than that required for weak solutions to be well defined. The latter assumption is precisely formulated via an interpolation inequality involving the functions $\psi$, $\phi$, and its sharp Sobolev conjugate $\phi_n$. The smoothness parameter $\sigma$ in the relevant Besov spaces   depends on the interpolation parameter, on the upper Simonenko index of $\phi'$, and on the exponent in the fractional differentiability of $a$ with respect to $x$. Thanks to classical embedding theorems for Besov spaces, these pieces of information on $V(\E u)$ and $\pi$ imply that they also belong to fractional Sobolev spaces with suitable smoothness parameters.

{Some special instances of these results are available in the existing literature.}
 The case of the $p$-Laplacian Stokes system,  considered in  \cite{GPS2} for $p\geq 2$, is recovered with the choice $\phi(t)=t^p$. It is also complemented and augmented, in that  the range $1<p<2$ is included and a fractional differentiability dependence of the function $a$ in the $x$-variable is permitted. Systems governed by a Young function $\phi(t)$, possibly of power type $t^p$ for any $p\in (1,\infty)$, are the subject of \cite{GPS}. In that paper, however, only data $f\in L^{\widetilde \phi}_{\rm loc}(\Omega, \Rn)$ are admitted, where $\widetilde \phi$ denotes the Young conjugate of $\phi$. Furthermore, in \cite{GPS} 
the assumptions on the dependence on $x$ are more restrictive.  
 The result  of \cite{GPS} implies, in particular, that $V(\E u)\in W^{1,2}_{\rm loc}(\Omega, \mathbb R^{n\times n})$, provided that $\phi(t)=t^p$ with $1<p<2$ and $f\in L^{p'}_{\rm loc}(\Omega, \Rn)$. The same kind of regularity for local solutions to \eqref{elliptic} and for the same range of values of $p$ was established in \cite{Ser}.  In the recent paper \cite{Cavagnoli}, the local Sobolev regularity of $V(\E u)$ for local solutions to \eqref{equa}  has been shown to hold under the weaker assumption that  $f\in L^{\frac{np}{n(p-1)+2-p}}_{\rm loc}(\Omega, \Rn)$. Such an assumption is known to be optimal  even for an analogous
 conclusion in the case of the standard $p$-Laplace equation -- see \cites{ClopGentileAPdN}.
 Related results on the $W^{1,2}_{\rm loc}(\Omega, \mathbb R^{n\times n})$ regularity of $V(\E u)$ for local solutions to the Stokes system \eqref{equa}, subject to diverse growth conditions on the differential operator and on the summability of the right-hand side,  are contained in the papers \cites{Breit, BrFu, DiKa, FuSe, Na}. They do not address, however,  regularity issues about the function $\pi$.
Global $W^{1,2}(\Omega, \mathbb R^{n\times n})$ regularity of $V(\E u)$ is proved in \cite{BeDi} for every $p\in (1,\infty)$ and  for solutions to homogeneous Dirichlet problems for the system \eqref{elliptic}, under the assumption  $f\in W^{1,p'}(\Omega, \Rn)$. Partial results in this connection can also be found in \cites{BKR, BeRu1,  SerSh}.

Results concerning other quantities than $V(\E u)$ are available, although the regularity theory of systems depending of the symmetric gradient is still much less developed than its counterpart for the full gradient. For instance, $C^{1,\alpha}$--regularity of solutions to symmetric gradient $p$-Laplacian type systems and Stokes systems holds for $n=2$  \cites{DKS, KMS}, but it is still an open question in higher dimension.
For $n\geq 3$, only partial $C^{1,\alpha}$--regularity  of solutions to Stokes type systems has been established -- see the papers \cites{Breit, BrFu, Fuc, FGR}. In
\cite{AcMi}, the same kind of regularity is exhibited for solutions to more general Stokes systems driven by an $x$--dependent power type nonlinearity  in $\E u$. Maximal higher-integrability of the gradient of solutions to the Stokes system \eqref{equa} is the topic of \cite{DiKa}.

The paper is organized as follows. {The main results are stated in Section \ref{sec:main}, where
 they are illustrated by implementations to specific choices of the Young functions $\phi$ and $\psi$. This is preceded by two background sections, which can be skipped by readers who are familiar with the current theory of Orlicz and Orlicz-Sobolev spaces.
Specifically, notions and  specific properties concerning Young functions, to be used in our proofs, are the content of Section \ref{sec:young}. In the subsequent Section \ref{spaces}, the function spaces entering our analysis are introduced and various pertaining embeddings and inequalities are collected. We then continue with Section \ref{sec:SP},} which is devoted to an ad hoc modular Poincar\'e-Sobolev  inequality, in Orlicz-Sobolev spaces, tailored for our applications. In combination with  techniques that are peculiar
to the Orlicz space realm, this is a critical step in our approach.
 The proofs of the main results are accomplished in the remaining Sections \ref{sec:proof1}--\ref{sec:proof3}. A priori Besov space estimates  for $V(\E u)$, under additional regularity assumptions on the function $a$ and the datum $f$ are established in Section \ref{sec:proof1}. They are removed in Section \ref{sec:proof2} via an approximation process. This double-step argument is needed because of the  weakness of the assumptions imposed on the $x$--dependence of the function $a$. 
 The subject of Section \ref{sec:proof3} is the proof of the regularity of $\pi$. It is accomplished  by regarding the first line in  \eqref{equa} as  an equation where both $f$ and $u$ are assigned, and  $\pi$ is the sole unknown.

\section{Young functions}\label{sec:young}

A  Young
function $\phi$  is a left-continuous convex function  from  $[0, \infty )$ into  $[0, \infty ]$ that vanishes
at $0$ and is not constant in $(0, \infty)$. Any Young function $\phi$ can be represented as
\begin{equation}\label{A}
\phi(t) = \int _0^t \phi'(s)\, \d s \quad \quad \textup{for}\ t\in[0,\infty),
\end{equation}
 where $\phi '$ denotes the non-decreasing, left-continuous representative of the derivative of $\phi$. 
\\ If $\phi$ is a Young function, then the function
\begin{align}
    \label{incr}
    \frac{\phi(t)}{t} \quad \text{is non-decreasing.}
\end{align}
Moreover,
\begin{align}
    \label{{kt}}
    k\phi(t) \leq \phi(kt) \quad \text{for $k \geq 1$ and $t\geq 0$.}
\end{align}
The Young conjugate of a Young function~$\phi$ is defined as
\begin{align*}
  \widetilde \phi(t) = \sup\{st - \phi(s): s\geq 0\} 
  \quad \text{for $t \geq 0$.}
\end{align*}
One has that $\widetilde \phi$ is also a Young function and
\begin{equation}
  \label{Young-psi}
  st\le \phi(s)+\widetilde \phi(t)
  \qquad\mbox{for $s,t\ge0$.}
\end{equation}
Furthermore,
\begin{align}
    \label{inverses}
   t\leq \phi^{-1}(t)\widetilde \phi^{-1}(t) \leq 2t \quad \text{for $t \geq 0$,}
\end{align}
where the inverses are defined in the generalized right-continuous sense.
\\ The following inequalities hold:
\begin{equation}\label{complementary-N-functions}
  \widetilde\phi\Big(\frac{\phi(t)}{t}\Big)
  \le
   \phi(t)
  \le
  \widetilde \phi\Big(\frac{2\phi(t)}{t}\Big)
  \qquad
  \text{for $t>0$.}
\end{equation}
Moreover,
\begin{align}
    \label{july35}
    \widetilde\phi (\phi'(t))
  \le
   \phi(2t) \qquad
  \text{for $t>0$.}
\end{align}
  A Young function $\phi$ is said to dominate another Young function $\psi$ globally if there exists a constant $c>0$  such that
\begin{align}
    \label{dominate-young}
   \psi (t) \leq \phi(ct)
\end{align}
for $t \geq 0$. The function $\phi$ is said to dominate $\psi$ near infinity  
 if there exists a constant $t_0>0$ such that \eqref{dominate-young} holds for $t \geq t_0$.
 We say that the functions $\phi$ and $\psi$ are equivalent globally [near infinity] if they dominate each other globally [near infinity]. The equivalence of Young functions in this sense will be denoted by $\phi \approx \psi$.
\\
A  function $\phi : [0, \infty) \to [0,\infty)$ is said to satisfy the $\Delta_2$-condition -- briefly, $\phi \in \Delta_2$ -- if there exists a constant $c\geq 2$ such that
\begin{align}
    \label{delta2}
    \phi(2t)\le c\,\phi(t) \quad \text{for $t\geq 0$.}
\end{align}
The function $\phi$ is said to satisfy the $\nabla_2$-condition -- briefly, $\phi \in \nabla_2$ -- 
if there exists a constant $c>2$ such that
\begin{align}
    \label{nabla2}
    \phi(2t)\geq  c\,\phi(t) \quad \text{for $t\geq 0$.}
\end{align}
The optimal constants $c$
appearing in \eqref{delta2} and \eqref{nabla2} will be denoted by $\Delta_2(\phi)$ and $\nabla_2(\phi)$.
\\ One has that $\phi \in \Delta_2$ if and only if $\widetilde \phi \in \nabla_2$. Also, if $\phi \in \Delta_2$, then $\phi' \in \Delta_2$.
\\ Assume that the Young function $\phi$ is strictly positive in $(0, \infty)$ and $\phi \in C^2(0, \infty)$. We set 
\begin{align}
    \label{indices}
    i_\phi= \inf_{t>0} \frac{t\phi''(t)}{\phi'(t)} +1 \quad \text{and} \quad s_\phi= \sup_{t>0} \frac{t\phi''(t)}{\phi'(t)} +1.
\end{align}
Namely, $i_\phi$ and $s_\phi$ are the so called Simonenko indices of $\phi'$, shifted by $1$.
One has that $\phi \in \Delta_2$ if and only if $s_\phi <\infty$, and $\widetilde \phi \in \Delta_2$ if and only if $i_\phi>1$. In what follows, when stating any property involving the indices  $i_\phi$ and $s_\phi$ we shall tacitly assume that they satisfy the inequalities:
\begin{align}
    \label{indices3}
    1<i_\phi \leq s_\phi <\infty.
\end{align}
One can verify that
\begin{align}
    \label{indices1}
    i_\phi\leq    \frac{t\phi'(t)}{\phi(t)}  \leq  s_\phi \quad \text{for $t >0$.}
\end{align}
 Also, 
\begin{align}
    \label{indices2}
    s_\phi '\leq    \frac{t{\widetilde \phi}'(t)}{\widetilde \phi(t)}  \leq  i_\phi ' \quad \text{for $t >0$,}
\end{align}
where $i_\phi'$ and $s_\phi'$ stand for the H\"older conjugates of $i_\phi$
and $s_\phi$.
\\ From \eqref{indices1}, one has that
 \begin{align}
   \min\{ \lambda^{i_\phi}, \lambda^{s_\phi}\} \phi(t)
    &\le \phi(\lambda t)
      \le\max\{ \lambda^{i_\phi}, \lambda^{s_\phi}\}\phi(t) \quad \text{for $t, \lambda \geq 0$}
      \label{homogeneity-phi}
      \end{align}
      and, from \eqref{indices2},
      \begin{align}
      \min\{ \lambda^{i_\phi'}, \lambda^{s_\phi'}\}\widetilde \phi(t)
    &\le \widetilde \phi(\lambda t)
      \le \max\{ \lambda^{i_\phi'}, \lambda^{s_\phi'}\}\widetilde \phi(t) \quad \text{for $t, \lambda \geq 0$.}
      \label{homogeneity-phi-star}
  \end{align}
  In particular, Equation \eqref{homogeneity-phi} implies that
  \begin{align}
      \label{july57}
      \phi'(0)=0 \quad \text{and} \quad \lim_{t\to \infty}\phi'(t)=\infty.
  \end{align}
  Coupling \eqref{july35} with \eqref{homogeneity-phi} yields
  \begin{align}
    \label{july59}
    \widetilde\phi (\phi'(t))
  \le
   c(s_\phi)\phi(t) \qquad
  \text{for $t>0$.}
\end{align}
  Furthermore,
\begin{equation}\label{july36}
    \phi(s+t)\le c(s_\phi)\big(\phi(s)+\phi(t)\big) \quad \text{for $s,t \geq 0$.}
\end{equation}
Given a Young function $\phi$ and $a\geq 0$, the shifted Young function $\phi_a$ is defined, according to \cite{DieForTomWan19} (see also \cite{DieEtt08} for an earlier version), as 
\begin{align}
  \phi_a(t) = \int_0^t \frac{\phi'(\max \{ s,  {a}  \})}{\max \{ s, {a} \}} s \, \d s,
\end{align}
 for $t \in
\R$. The parameter $a$ is called
the shift.  Plainly, $\phi_0= \phi$. Moreover, if
$a \eqsim b$, then $\phi_a(t) \eqsim \phi_b(t)$, with equivalence constants depending on  $i_\phi$, $s_\phi$, and the equivalence constants in the relation $a \eqsim b$.
 Here, and in what follows, the relation $\eqsim$ between two expressions means that they are bounded by each other up to positive constants depending on appropriate quantities. This relation should not be confused with the relation $\approx$
between Young functions introduced above.
We refer to the papers \cites{DieEtt08, DieForTomWan19} for proofs of the properties of shifted Young functions recalled in this section.
\\ If $\phi \in C^2(0, \infty)$, then   $\phi_a\in C^2((0, \infty) \setminus \{a\})$, and 
\begin{equation*}
  \frac{t\phi_a''(t)}{\phi_a'(t)}
  =
  \left\{
  \begin{array}{cl}
    1&\mbox{ if $0\le t<a$}\\[0.8ex]
    \displaystyle \frac{t\phi''(t)}{\phi'(t)}&\mbox{ if $t>a$.}
  \end{array}
  \right.
\end{equation*}
Assume that $\phi$ fulfills \eqref{indices3}. Then,
\begin{align}
    \min\{\lambda^{i_\phi},\lambda^2\} \phi_a(t)
    &\le \phi_a(\lambda t)
      \le \max\{\lambda^{s_\phi},\lambda^2\}\phi_a(t) \quad \text{for $\lambda, t \geq 0$.}
      \label{homogeneity-phi-shifted}
      \end{align}
      Hence, by \eqref{indices2},
      \begin{align} 
   \min\{\lambda^{s_\phi'},\lambda^2\} \widetilde{\phi_a}(t)
    &\le \widetilde{\phi_a}(\lambda t)
      \le\max\{\lambda^{i_\phi'},\lambda^2\}\widetilde{\phi_a}(t) \quad \text{for $\lambda, t \geq 0$.}
      \label{homogeneity-phi-star-shifted}
  \end{align}
\\ Equations \eqref{Young-psi},
   \eqref{homogeneity-phi-shifted},
and~\eqref{homogeneity-phi-star-shifted} ensure  that, for
every $\delta>0$ there exists~$c_\delta=c_\delta(\delta,i_\phi,s_\phi)\geq 1$ such
that 
\begin{align}
  \label{eq:young}
  \begin{aligned}
    s\,t &\leq \delta\,\phi_{a}(t) + c_\delta\, \widetilde{\phi_a}(s) \qquad \text{for $s,t,a\geq 0$.}
  \end{aligned}
\end{align}
One has that
\begin{equation}\label{bound-phi-star}
    \widetilde {\phi_a}(\phi_a'(t))
    \le
    c(s_\phi)\phi_a(t) \quad \text{for $t \geq 0$.}
\end{equation}
Moreover,
\begin{equation}\label{sub-additivity-phi}
    \phi_a(s+t)\le c(s_\phi)\big(\phi_a(s)+\phi_a(t)\big) \quad \text{for $s,t \geq 0$.}
\end{equation}
There exists a constant $c=c(i_\phi, s_\phi)$ such that
  \begin{align}
  \label{eq:removal-shift2}
    \phi_{a}(t) &\leq \delta\, \phi(a) + c\, \delta\,
    \phi (t/\delta),
  \end{align}
for every $a\geq 0$, $t \geq 0$, and $\delta \in (0,1]$.
\\
The following relation holds for every 
 $P,Q\in\R^{n\times
    n}$ such that $|P|+|Q|>0$:
      \begin{equation}\label{july21}
      \phi_{|Q|}(|P-Q|)
      \eqsim \frac{\phi'(\max\{|P|,|Q|\})}{\max\{|P|,|Q|\}}|P-Q|^2,
  \end{equation}
  with equivalence constants depending only on $i_\phi$ and $s_\phi$.
  \\ Also,
 \begin{align}\label{shift4}
\phi''(|P|+|Q|)|P-Q|&\eqsim \phi'_{|P|}(|P-Q|) 
\end{align}
\begin{align}\label{shift4'}\phi''(|P|+|Q|)|P-Q|^2&\eqsim \phi_{|P|}(|P-Q|) 
\end{align}
   with equivalence constants depending only on $i_\phi$ and $s_\phi$.
\\
Define the function $V: \mathbb R^{n\times n} \to \mathbb R^{n\times n}$
as
\begin{align}
    \label{V}
    V(P)= \sqrt{\frac{\phi'(|P|)}{|P|}}\,P \quad \text{for $P \in \mathbb R^{n\times n}$.}
\end{align}
 Then, 
for every $\delta>0$ there exists~$c_\delta=c_\delta(\delta,i_\phi,s_\phi)$ such that
 \begin{equation}
  \label{add-shift}
  \phi(t)
  \le
  c_\delta\phi_{|P|}(t)+\delta|V(P)|^2
  \le
  c_\delta\phi_{|P|}(t)+\delta {s_\phi}\,\phi(|P|)
\end{equation}
for every $P\in\R^{n\times n}$, where $c_\delta=c_\delta(\delta,i_\phi,s_\phi)$.
\\ One has that
 \begin{align}\label{a-coercivity-split}
    \phi_{|{Q}|} \left( |{P-Q}| \right)
    \eqsim |{V(P)-V(Q)}|^2
    \end{align}
    for every~$P,Q \in \R^{n\times n}$, and 
    \begin{align}
    \label{seconda-split}
     |{V(Q)}|^2
    \eqsim
    \phi_{|{Q}|}(|{Q}|) \eqsim \phi(|{Q}|)
  \end{align}
  for every~$P\in \R^{n\times n}$.
\\ 
Let $n \in \mathbb N$, with $n\geq 2$, and let $\phi$ be a Young function.
The Sobolev conjugate  of $\phi$ is the Young function $\phi_n$ given by
\begin{align}
    \label{phin}
    \phi_n(t) = \phi (H_n^{-1}(t)) \quad \text{for $t \geq 0$,}
\end{align}
where the function
$H_n: [0, \infty) \to [0, \infty)$
is defined as
\begin{equation}\label{Hn}
H_n(s) =\bigg(\int_0^s\Big(\frac\tau{\phi(\tau)}\Big)^{\frac1{n-1}}\d\tau\bigg)^{\frac{n-1}{n}} \quad \text{for $s \geq 0$.}
\end{equation} 
Here, $H_n^{-1}$ denotes the left-continuous inverse of $H_n$, and $\phi$ is extended to $[0, \infty]$ by setting $\phi(\infty)=\infty$. 
\\ The definition of $H_n$ requires that
\begin{align}
    \label{conv0}
\int_0\Big(\frac{t}{\phi(t)}\Big)^{\frac{1}{n-1}}\, \d t < \infty.
\end{align}
 This condition is not restrictive in view of our purposes. 
In what follows, the function $\phi_n$ is tacitly assumed to be defined with $\phi$ modified near zero, if necessary, in such a way that \eqref{conv0} is satisfied. The choice of this possible modification of $\phi$ is immaterial in our main results. Indeed, they  depend on  $\phi_n$ only up to equivalence near infinity, and the latter depends  on $\phi$ only up to equivalence near infinity. 
\\ Note that, if
\begin{equation}\label{convinf}
    \int^\infty\Big(\frac{t}{\phi(t)}\Big)^{\frac{1}{n-1}}\d t<\infty,
\end{equation}
then $H^{-1}(t)=\infty$ for $t\ge t_\infty$, where we have set $t_\infty =\lim_{s\to\infty} 
H(s)$. Consequently,  $\phi_n(t)=\infty$ for $t\ge t_\infty$ as well.
\\ Also, there exist constants $c, t_0 >0$ such that
\begin{align}
    \label{july60}
    \phi(t) \leq \phi_n(ct) \quad \text{for $t\geq t_0$.}
\end{align}

\section{Function spaces and functional inequalities}\label{spaces}

Let $\Omega \subset \rn$ be an open set in $\rn$ and  let $v: \Omega \to \mathbb R^N$. We define 
 $\tau_hv : \Omega \to \mathbb R^N$ by 
  \begin{equation}
    \begin{gathered}
      \tau_h v(x)= v(x+he_i)-v(x),
    \end{gathered}
\label{def:diff-quotients}
\end{equation}
for 
$x\in\Omega$ and
$h\neq0$ such that $x+he_i\in\Omega$. Here, 
$e_i$,   denotes the $i$-th coordinate unit vector in $\mathbb R^n$, for $i\in\{1,\dots,n\}$.
  \\ Assume that the functions $v, w : \Omega \to \mathbb R^N$ are weakly differentiable. Then,
$$ (\tau_{h}v)_{x_i}=\tau_{h}(v_{x_i}).$$
  If the support of at least one of the functions $v$ and
  $w$ is compactly contained in $\Omega$, then 
\begin{align}
    \label{july19}
    \int_{\Omega} v\, \tau_{h} w\, \dx =\int_{\Omega} w\, \tau_{-h}v\, \dx
\end{align}
for sufficiently small $|h|$.
\\ Moreover,
 the  product rule for  finite differences takes the form:
\begin{equation}\label{product-rule}
\tau_{h}(v w)(x)=v(x+he_i)\tau_{h}w(x)+w(x)\tau_{h}v(x),
\end{equation}
 for $x\in \Omega$ and $h$ such that $x+he_i \in \Omega$.
\\
Let $\phi$ be a Young function and let $N\in \mathbb N$. The  Orlicz space $L^\phi(\Omega, \mathbb R^N)$ is defined as 
\begin{align}
    \label{orlicz}
    L^\phi (\Omega, \mathbb R^N)= \bigg\{v: \Omega \to \mathbb R^N: \exists \lambda >0\,\,\text{s.t.} \,\, \int_\Omega\phi\bigg(\frac{|v|}{\lambda}\bigg)\, \d x <\infty\bigg\}.
\end{align}
{One has that $L^\phi(\Omega, \mathbb R^N)$  is a Banach space, equipped with the Luxemburg norm given by
\begin{align}
    \label{orlicznorm}
    \|v\|_{L^\phi(\Omega, \mathbb R^N)}= \inf\bigg\{\lambda >0: \int_\Omega\phi\bigg(\frac{|v|}{\lambda}\bigg)\, \d x \leq 1\bigg\}.
\end{align}}
We also introduce  the notation $$L^\phi_\perp (\Omega, \mathbb R^N)= \{ v\in L^\phi (\Omega, \mathbb R^N): v_\Omega =0\},$$ 
where $v_\Omega = \frac 1{|\Omega|}\int_\Omega v\d x$, the mean value of $v$ over $\Omega$.   When $N=1$, the target space $\mathbb R^N$ in the notation of the Orlicz spaces and the other spaces introduced below will be omitted. Spaces of matrix-valued functions will also be considered and are defined analogously.
\\ If $v\in L^{\phi}_{\rm loc}(\Omega, \mathbb R^N)$, then
\begin{equation}\label{est-translation}
\int_{B_{R}} \phi(|v(x+he_i)|)\ \dx\leq  \int_{B_{R+|h|}}\phi(|v(x)|)\ \dx 
\end{equation}
for every $R>0$ and $h \neq 0$ such that $B_{R+|h|} \Subset \Omega$. Here, $B_R$ denotes a ball of radius $R$.
\\ The Orlicz-Sobolev space $W^{1,\phi}(\Omega, \mathbb R^N)$ is  given by
\begin{align}
    \label{orliczsobolev}
    W^{1,\phi} (\Omega, \mathbb R^N)= \big\{v \in L^\phi (\Omega, \mathbb R^N): v \,\,\text{is weakly differentiable and}\,\, \nabla v \in L^\phi(\Omega, \mathbb R^{N\times n})\big\}.
\end{align}
The local versions $L^\phi _{\rm loc}(\Omega, \mathbb R^N)$
and $W^{1,\phi}_{\rm loc} (\Omega, \mathbb R^N)$ of these spaces are defined as usual.   Observe that $L^\phi _{\rm loc}(\Omega, \mathbb R^N)\subset L^\psi _{\rm loc}(\Omega, \mathbb R^N)$ for some Young functions $\phi$ and $\psi$ if and only if $\phi$ dominates $\psi$ near infinity. A parallel characterization holds for inclusion relation between local Orlicz-Sobolev spaces. 
\\
The space $W^{1,\phi}_0 (\Omega, \mathbb R^N)$ of functions in $W^{1,\phi} (\Omega, \mathbb R^N)$ vanishing on $\partial \Omega$ can be defined as
\begin{align}
    \label{orliczsobolev0}
    W^{1,\phi}_0 (\Omega, \mathbb R^N)= \big\{v \in W^{1,\phi} (\Omega, \mathbb R^N):  \,\,\text{the extension of $v$ by $0$ to $\mathbb R^n\setminus \Omega$ is weakly differentiable in $\mathbb R^n $}\big\}.
\end{align}
In what follows, functions in the space $W^{1,\phi}_0 (\Omega, \mathbb R^N)$ will be assumed to be defined as $0$ in $\rn \setminus \Omega$ without further mentioning.
\\
Let $\sigma \in (0,1]$. 
 We define the local fractional Orlicz-Sobolev space $W^{\sigma, \phi}_{\rm}(\Omega, \mathbb R^N)$ as 
\begin{equation}\label{fractional-semi-norm}
  W^{\sigma, \phi}_{\rm loc}(\Omega, \mathbb R^N) =\Bigg\{v\in L^\phi_{\rm loc}(\Omega, \mathbb R^N): \forall \, \text{open set}\, \Omega' \Subset \Omega \, \exists \,\lambda>0 \, \text{s.t.}\,\int_{\Omega'}\int_{\Omega'}
    \phi\left(\frac{|v(x)-v(y)|}{\lambda|x-y|^{\sigma}}\right)\frac{\dx\d y}{|x-y|^{n}}<\infty\Bigg\},
\end{equation}
 and the local Besov space  $B^{\sigma, \phi, \infty}_{\rm loc}(\Omega, \mathbb R^N)$ 
as 
\begin{align}\label{besov}
B^{\sigma, \phi, \infty}_{\rm loc}(\Omega, \mathbb R^N)= \bigg\{v\in L^\phi_{\rm loc}(\Omega, \mathbb R^N): \forall\,B_R\Subset  \Omega\,\exists\, \lambda, \delta>0 \, \text{s.t.}\sup_{|h|<\delta }    \int_{B_R}\phi\bigg(\bigg|\frac{\tau_hv}{\lambda |h|^\sigma}\bigg|\bigg)\, \d x < \infty\bigg\}.
\end{align}
{Moreover,  if $B_R$ is any ball such that $B_{2R}\Subset \Omega$, we set, in analogy with \eqref{orlicznorm},
\begin{align}
    \label{besovnorm}
    \|v\|_{B^{\sigma, \phi, \infty}(B_R, \,\mathbb R^N)}= \inf\bigg\{\lambda >0: \sup_{|h|<R }    \int_{B_R}\phi\bigg(\bigg|\frac{\tau_hv}{\lambda |h|^\sigma}\bigg|\bigg)\, \d x \leq 1\bigg\}.
\end{align}}
One has that, 
    {\begin{equation}
        \label{inlc1}
        B^{\sigma, \phi, \infty}_{\rm loc}(\Omega, \mathbb R^N) \subset W^{\beta, \phi}_{\rm loc}(\Omega, \mathbb R^N) \quad \text{if $0<\beta < \sigma <1$.}
    \end{equation}
    The inclusion \eqref{inlc1} follows from \cite[Lemma 2.13]{GPS}.
Moreover, \cite[Lemma 2.10]{GPS} implies that
\begin{align}
    \label{july10}
    W^{1,\phi}_{\rm loc}(\Omega, \mathbb R^N) \subset  B^{1,\phi, \infty}_{\rm loc}(\Omega, \mathbb R^N),
\end{align}
and 
\begin{equation}\label{est-diff-quotient}
    \int_{B_{R}} \phi\left(\frac{|\tau_{h} v|}{|h|}\right)\dx
    \leq
    \int_{B_{R+|h|}}\phi(|\nabla v|)\, \dx
  \end{equation}
 for $v\in W^{1,\phi}_{\rm loc}(\Omega, \mathbb R^N)$, and
 every $R>0$ and $h \neq 0$ such that $B_{R+|h|} \Subset \Omega$. 
\\
Notice that, although the lemmas from \cite{GPS} quoted above are stated for Young functions $\phi$ satisfying additional assumptions, the same arguments in their proofs yield a version of the results for arbitrary Young functions $\phi$.
\\
}
A Sobolev type inequality in Orlicz spaces tells us that, if $\phi$ is a Young function and $|\Omega|<\infty$, then
\begin{align}
    \label{OS-emb}
    W^{1,\phi}_0(\Omega, \mathbb R^N) \to L^{\phi_n}(\Omega, \mathbb R^N),
\end{align}
where $\phi_n$ is defined as in \eqref{phin},
 see \cite{cianchi_CPDE} (and \cite{cianchi_IUMJ} for an equivalent formulation). Here, and in what follows, the arrow $\to$ denotes a continuous embedding.
 If, in addition, $\Omega$ is regular enough -- a bounded Lipschitz domain, for instance -- then 
    \begin{align}
    \label{OS-emb'}
    W^{1,\phi}(\Omega, \mathbb R^N) \to L^{\phi_n}(\Omega, \mathbb R^N).
\end{align}
Moreover, the target $L^{\phi_n}(\Omega, \mathbb R^N)$ is optimal (i.e. smallest possible) in the embeddings \eqref{OS-emb} and \eqref{OS-emb'} among all Orlicz spaces. As mentioned above, in these embeddings, the function $\phi_n$ is defined with $\phi$ modified near zero, if necessary, in such a way that the condition \eqref{conv0} is satisfied. 
\\  In the special case when $\phi(t)=t^p$ for some $p \geq 1$, the spaces defined above recover the classical Lebesgue, Sobolev, Fractional Sobolev and Besov spaces, and are denoted simply by $L^p (\Omega, \mathbb R^N)$, $W^{1,p} (\Omega, \mathbb R^N)$, $W^{\sigma,p}_{\rm loc} (\Omega, \mathbb R^N)$, and $B^{\sigma, p, \infty}_{\rm loc}(\Omega, \mathbb R^N)$.
\\ 
{ A Poincar\'e-Sobolev inequality for Besov spaces tells us that, if $0<\beta < \sigma <1$, then
$B^{\sigma,2,\infty}_{\rm loc}(\Omega, \R^N) \subset L^{\frac{2n}{n-2\beta}}_{\rm loc}(\Omega, \R^N)$. Moreover, 
if $B_R\Subset\Omega$ and $r<R$, then
\begin{align}\label{besov-emb}
\left(\int_{B_{r}}|v|^{\frac{ {2n}}{ n-2\beta}}\,\dx  \right)^{\frac{n-2\beta}{2n}}
\leq  c(R-r)^{\sigma-\beta} \bigg(\sup_{0<h< \frac{R-r}{2}}\int_{B_{\frac{R+r}{2}}}\frac{|\tau_hv|^2}{|h|^{2\sigma}}\,\dx\bigg)^\frac 12+ \frac{c}{(R-r)^{\beta}} \bigg(\int_{B_{R}}|v|^2\,\dx\bigg)^\frac 12
\end{align}
for some constant $c=c(n,\beta,     \sigma)$ and for every $u \in B^{\sigma,2,\infty}_{\rm loc}(\Omega, \R^N)$, see \cite[Lemma 2.8]{GPS2}.}
\\
The symmetric gradient of a function $v \in L^1_{\rm loc}(\Omega, \mathbb R^n)$ is defined as
\begin{equation}
  \E v =\tfrac12\big(\nabla v+(\nabla v)^T\big),
\end{equation}
in the distributional sense. Define the symmetric gradient Orlicz-Sobolev space as
\begin{align}
    \label{sym-orliczsobolev}
    E^{1,\phi} (\Omega, \mathbb R^n)= \big\{v \in L^\phi (\Omega, \mathbb R^n):  \,\, \mathcal E v \in L^\phi(\Omega, \mathbb R^{n\times n})\big\}.
\end{align}
Its subspace $ E^{1,\phi}_0 (\Omega, \mathbb R^n)$ and the space  $ E^{1,\phi}_{\rm loc} (\Omega, \mathbb R^n)$ can be defined in the same spirit as $W^{1,\phi}_0(\Omega, \mathbb R^n)$ and $W^{1,\phi}_{\rm loc}(\Omega, \mathbb R^n)$.
 \\ A Korn type inequality on balls in Orlicz spaces asserts that, if $\phi \in \Delta_2 \cap \nabla _2$, then  
$$E^{1,\phi}_0 (B_R, \mathbb R^n) \to W^{1,\phi}_0(B_R, \mathbb R^n),$$
and there exists a constant $c$, depending on $n$, $\Delta_2(\phi)$,
  and $\nabla_2(\phi)$  such that
 \begin{equation}
    \label{korn-0}\int_{B_R}\phi(|\nabla u|)\,\dx
    \le
    c\int_{B_R}\phi(|\E u|)\dx
  \end{equation}
  for every $u  \in E^{1,\phi}_0 (B_R, \mathbb R^n)$, see \cite[Thms. 6.10]{DRS:2010}. A variant of the Korn inequality in Orlicz spaces for functions $\phi$ which do not satisfy the $\Delta_2$ and $\nabla_2$ conditions can be found in \cite{cianchi_JFA}.
\\ 
An Orlicz space version of the
Bogovski\u\i\ 
 Lemma on balls $B_R$ is a consequence of \cite[Corollary 4.2]{Krepela-Ruzicka:2020}, and tells us what follows. Assume that the Young function $\phi \in \Delta_2 \cap \nabla_2$. Then there exists 
 a linear  operator
  \begin{equation}\label{july15}
\mathcal{B}:L^\phi_\perp(B_R, \mathbb R^N)\to W^{1,\phi}_0(B_R,\R^N)
  \end{equation}
  such that
\begin{equation}\label{july16}
   {\rm div}(\mathcal{B}v)=v\quad\mbox{in $B_R$}
 \end{equation}
 for every $v\in
 L^\phi_\perp(B_R, \mathbb R^N)$. 
 Moreover, there exists a constant $c$ depending on $n$,
  $\Delta_2(\phi)$, and $\nabla_2(\phi)$ such that
 \begin{equation}\label{Bogovski-gradient}
\int_{B_R}\phi(|\nabla (\mathcal{B}v)|)\, \dx\le c\int_{B_R}\phi(|v|)\, \dx
 \end{equation}
 and
 \begin{equation}\label{Bogovski-diff-quotients}
    \int_{B_R}\phi\left(\frac{|\tau_h\nabla(\mathcal{B}v)|}{|h|}\right) \dx
    \le
    c\int_{B_R}\phi\left(\frac{|\tau_hv|}{|h|}+\frac{|\tau_{-h}v|}{|h|}
      +\frac{|v|}{R}\right)\dx
  \end{equation}
  for every $v\in L^\phi_\perp(B_R, \mathbb R^N)$ and every $h\neq0$. 
In the latter inequality,   the functions  $v$ and $\mathcal{B}v$ are continued by
  zero outside of $B_R$. 
  Hence, in particular, if 
  $v\in W^{1,\phi}_0(B_R, \mathbb R^N)$, then 
\begin{align}\label{Bogovski-diff-quotients-2}
    \int_{B_{R+|h|}}\phi\left(\frac{|\tau_h\nabla (\mathcal{B}v)|}{|h|}\right) \dx
    &\le
    c\int_{B_R}\left[\phi(|\nabla v|) +\phi\left(\frac{|v|}{R}\right)\right] \dx
  \end{align}
  for some constant $c=c(n,\Delta_2(\phi),\nabla_2(\phi))$ and 
  for every $h\neq0$.
  This follows from an estimate for the right-hand side of the inequality \eqref{Bogovski-diff-quotients} by \eqref{est-diff-quotient}.

\section{Main results}\label{sec:main}

Our hypotheses on the  function $a$   in \eqref{equa} and \eqref{elliptic} read as follows.
 We assume 
that $a:\Omega\times\Sym\to\Sym$ is a Carath\'eodory function such that, for a.e. $x,y\in\Omega$, 
\begin{align}
   \label{ip1}
 (   a(x,P)-a(x,Q))\cdot (P-Q)
  &\ge \nu\phi''(|P|+|Q|)|P-Q|^2,\\
  \label{ip2}
  |a(x,P)-a(x,Q)|&\le L\phi''(|P|+|Q|)|P-Q|,\\
    \label{ip2b}
   |a(x,P)|&\le L\phi'(|P|),\\
  \label{ipholder}
  |a(x,P)-a(y,P)|
  &\le
  (k(x)+k(y))|x-y|^\alpha \phi'(|P|),
 \end{align}
  for every $P,Q\in\Sym$. 
Here:
\begin{equation}\label{hpphi}
    \phi \in C^2([0, \infty))\,\, 
   \text{is a Young function satisfying \,\,(\ref{indices3});}
\end{equation} 
\begin{align}
    \label{hppar}
    \text{$0<\nu\le L$ \,\, and  \,\, $0<\alpha\le 1$;}
\end{align}
\begin{align}\label{hpk}
    \text{$k\in L^m_{\rm loc}(\Omega)$\,\,
for some \,\, $m\in (\tfrac n\alpha, \infty]$,}
\end{align} 
and the dot $\lq\lq \cdot "$ stands for scalar product between vectors or matrices.
 
The assumptions \eqref{ip1}--\eqref{ip2b} are the natural generalizations to the Orlicz ambit of standard conditions for nonlinear  elliptic operators with a power type behavior. 
In particular, Equations \eqref{ip2b} and \eqref{july57} imply that $a(x,0)=0$. Hence, the monotonicity property \eqref{ip1} implies the ellipticity condition
\begin{align}
    \label{july58}
    a(x,P) \cdot P
  &\ge \nu\phi''(|P|)|P|^2
\end{align}
for a.e. $x\in \Omega$ and  every $P\in\Sym$.

The assumption \eqref{ipholder}
amounts to the membership of the function $a(\cdot, P)$ in a Hajlasz-Sobolev type space of  order $\alpha$ and summability degree $m$. Especially, if $\alpha=1$, then \eqref{ipholder} is equivalent to requiring that $a(\cdot, P)\in W^{1,m}_{\rm loc}(\Omega, \Sym)$ for every $P\in \Sym$, and, in particular, 
$a(\cdot, P)\in {\rm Lip}_{\rm loc}(\Omega)$ when $m=\infty$.

A couple $(u,\pi)\in W^{1,\phi}_{\rm loc}(\Omega,\real^n)\times L^{\widetilde\phi}_{\rm loc}(\Omega)$ is called a local weak solution to
the system \eqref{equa} if $$\dive u=0$$  and
\begin{equation}\label{weak-equa}
  \int_{\Omega} a(x,\E
  u)\cdot\E\varphi-\pi\,\dive\varphi  \,\,\dx
  =
  \int_{\Omega}f\cdot\varphi\,\dx
\end{equation}
 for every  $\varphi\in  W^{1,\phi}(\Omega,\real^n)$ with compact support in $\Omega$.
 
  Clearly, such a definition requires that 
the function $f\cdot \varphi$ be locally integrable for every test function  $\varphi$ as above. Thanks to the Sobolev type embedding \eqref{OS-emb'}, this is guaranteed if 
\begin{align}\label{hpf}
    f\in L^{\widetilde {\phi_n}}_{\rm loc}(\Omega, \real^n),
\end{align}
where $\widetilde {\phi_n}$ denotes the Young conjugate of the function $\phi_n$ defined by \eqref{phin}. As hinted in Section \ref{sec:intro}, we are able to prove fractional differentiability of $V(\E u)$ and $\pi$, provided that the integrability assumption \eqref{hpf} is slightly enhanced. Specifically, this amounts to requiring that 
$$f\in L^\psi_{\rm loc}(\Omega, \Rn)$$
for some Young function $\psi$
fulfilling
    \begin{equation}\label{psi-assumption-main}
  \phi^{-1}(t)^{\theta} \phi_{n}^{-1}(t)^{1-\theta} \psi^{-1}(t)\le  c t
  \qquad\mbox{for}\,\, t\geq 1,
\end{equation}
for some exponent $\theta \in (0,1]$ and some constant $c$. 

 Thanks to \eqref{inverses}, the condition \eqref{psi-assumption-main} ensures that the function $\psi$ lies, up to equivalence, near infinity between the functions $\widetilde {\phi_n}$ and $\widetilde \phi$, which correspond to the values $\theta =0$ and $\theta =1$, respectively.
The former choice is excluded, whereas the latter   entails the strongest possible conclusions.
In particular, \eqref{psi-assumption-main} ensures that 
\begin{align}\label{oct1}
    L^\psi_{\rm loc}(\Omega, \Rn) \subset L^{\widetilde {\phi_n}}_{\rm loc}(\Omega, \real^n)
\end{align}
 -- see Remark \ref{rem:oct} below -- and hence that 
weak solutions to the system \eqref{equa}
are well defined.
\begin{thm}
    \label{main}
   Let $a:\Omega\times\Sym\to\Sym$ be a Carath\'eodory function satisfying \eqref{ip1}--\eqref{ipholder}. 
   Assume that $f\in L^\psi_{\rm loc}(\Omega,\mathbb{R}^n)$ for some Young function
   $\psi$  satisfying \eqref{psi-assumption-main} for some  $\theta \in (0, 1]$ and $c>0$.
     Let $(u,\pi)$ be a local weak solution to the system \eqref{equa}. \\ (i) One has that
\begin{align}
    \label{reg-u}
    V(\mathcal E u)\in B^{\sigma, 2, \infty}
   _{\rm loc}(\Omega, \mathbb R^{n\times n}),
\end{align}
where $\sigma =  \min\big\{\alpha, \theta, \theta s_\phi'/2\big\}$. Moreover,
{\begin{align}
    \label{oct109}    \|V(\E u)\|_{B^{\sigma, 2, \infty}(B_R, \mathbb R^{n\times n})}
    \le  c_1 
\bigg(\int_{B_{2R}}\psi(|f|)\,\dx\bigg)^\varkappa
  + c_2
\bigg(\int_{B_{2R}}\phi(|\nabla u|)\,\dx\bigg)^{\frac12}
   +c_3
\end{align}}
for every ball $B_{2R} \Subset \Omega$
where the exponent $\varkappa$   depends on the constants appearing in the assumptions, the constants $c_1, c_2, c_3$
also on $R$, and $c_2$ also on  
$\|k\|_{L^m(B_{2R})}$.
\\
(ii) 
One has that
\begin{align}
    \label{reg-pi}
    \pi\in  B^{\varrho, \widetilde \phi, \infty}
   _{\rm loc}(\Omega),
\end{align}
where  $ \displaystyle\varrho =\frac{\sigma}{\max\{1, i_\phi'/2\}}$.
Moreover,
{\begin{align}
    \label{boundpi}
\|\pi\|_{ B^{\varrho, \widetilde \phi, \infty}(B_R)}
    \le
 c_1 
\bigg(\int_{B_{2R}}\psi(|f|)\,\dx\bigg)^\varkappa  +
c_2 \Bigg[\bigg(\int_{B_{2R}}\phi(|\nabla u|)^\varsigma\,\dx\bigg)^{\frac 1\varsigma} 
 + \int_{B_{2R}}\widetilde\phi(|\pi|)\,\dx\Bigg]
   +c_3
\end{align}}
for every ball $B_{3R} \Subset \Omega$,
where the exponent $\varkappa$ and the constants $c_1, c_2, c_3$ depend on the same quantities as above, and the exponent  
$\varsigma$ depends 
 on the constants appearing in the assumptions.
\end{thm}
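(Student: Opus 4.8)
The plan is to establish the a priori estimate \eqref{oct109} first under stronger regularity assumptions on $a$ and $f$ — namely assuming enough smoothness so that the difference quotient technique applies rigorously and $V(\E u)$ is known to be locally Sobolev — and then to remove these extra assumptions by an approximation argument in Section \ref{sec:proof2}; part (ii) is then deduced in Section \ref{sec:proof3} by treating $\nabla\pi = f + \dive a(x,\E u)$ as an equation for $\pi$ alone. For part (i), I would start from the weak formulation \eqref{weak-equa}. The key is to test with (a regularized version of) $\varphi = \tau_{-h}\big(\eta^2 \tau_h u\big)$, where $\eta$ is a cutoff supported in $B_{2R}$; but since this $\varphi$ is not divergence-free, one must correct it using the Bogovski\u\i\ operator $\mathcal B$ of \eqref{july15}: set $\varphi = \tau_{-h}(\eta^2\tau_h u) - \mathcal B\big(\dive \tau_{-h}(\eta^2\tau_h u)\big)$, so that $\dive\varphi = 0$ and the pressure term drops out of \eqref{weak-equa}. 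The estimate \eqref{Bogovski-diff-quotients-2} controls the difference quotients of $\nabla(\mathcal B(\cdots))$ in terms of those of $u$, which is exactly what is needed to absorb the correction terms.

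After testing, move the $h$-difference onto $a$ via \eqref{july19} and split $\tau_h a(x,\E u) = \big(a(x+he_i,\E u(x+he_i)) - a(x+he_i,\E u(x))\big) + \big(a(x+he_i,\E u(x)) - a(x,\E u(x))\big)$. The first (``elliptic'') part is estimated from below by the monotonicity assumption \eqref{ip1} together with the identities \eqref{shift4'} and \eqref{a-coercivity-split}, which turn $\phi''(|\E u(x+he_i)|+|\E u(x)|)|\tau_h\E u|^2$ into $|\tau_h V(\E u)|^2$ up to constants; this yields the good term $\int \eta^2 |\tau_h V(\E u)|^2\,\dx$ on the left. The second (``$x$-Hölder'') part is controlled using \eqref{ipholder}: it produces a term of the form $\int \eta^2 (k(x)+k(x+he_i))|h|^\alpha \phi'(|\E u|)\,|\tau_h\E \varphi|\,\dx$, which after Young's inequality \eqref{eq:young} (with the shifted Young function and a small parameter $\delta$) and the higher-integrability bound $\phi(|\nabla u|)\in L^\varsigma_{\rm loc}$ — itself a consequence of a self-improving/Gehring-type argument, or of \cite{DiKa} — is bounded by $\delta\int\eta^2|\tau_h V(\E u)|^2 + c|h|^{2\alpha}(\text{data})$ using $\|k\|_{L^m}$ and Hölder's inequality in the exponent $m>n/\alpha$. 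The right-hand side term $\int f\cdot\varphi\,\dx$ is handled by writing it as $\int \tau_{-h}f \cdot (\eta^2\tau_h u)$ (or keeping $\tau_h$ on $\varphi$), then applying the Poincaré–Sobolev inequality: here the new modular Orlicz Poincaré–Sobolev inequality of Section \ref{sec:SP} is invoked to estimate $\eta^2\tau_h u$ in an Orlicz norm dual to $L^\psi$, and the interpolation hypothesis \eqref{psi-assumption-main} is precisely what guarantees that $\psi^{-1}$ sits below the product $\phi^{-1}{}^{\theta}(\phi_n^{-1})^{1-\theta}$, so that the resulting power of $|h|$ is $|h|^{2\theta}$ (after interpolating between the $L^\phi$ bound on $\tau_h u/|h|$ from \eqref{est-diff-quotient} and the $L^{\phi_n}$ bound from the Sobolev embedding \eqref{OS-emb}). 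Collecting terms, absorbing the $\delta$-terms to the left, and dividing by $|h|^{2\sigma}$ with $\sigma = \min\{\alpha,\theta,\theta s_\phi'/2\}$ gives \eqref{oct109}; the exponent $s_\phi'/2$ enters because the interpolation is carried out at the level of Young functions and the homogeneity bound \eqref{homogeneity-phi-star} converts modular control into a power of $|h|$ with exponent governed by the Simonenko index.

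For part (ii), rewrite the first equation of \eqref{equa} as $\nabla\pi = f + \dive a(x,\E u)$. Apply $\tau_h$, test against $\mathcal B(\tau_h\pi)$ — legitimate since $\tau_h\pi$ has zero mean on the relevant ball up to a harmless correction — and use $\int \tau_h\pi\,\dive(\mathcal B(\tau_h\pi)) = \int|\tau_h\pi|\cdot(\text{sign})$; more precisely one estimates $\int\widetilde\phi(|\tau_h\pi|/|h|^\varrho)$ by duality against $\tau_h a(x,\E u)$ and $\tau_h f$, using \eqref{Bogovski-gradient} with $\phi$ replaced by $\widetilde\phi$ (which lies in $\Delta_2\cap\nabla_2$ under \eqref{indices3}). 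The contribution of $\tau_h a(x,\E u)$ is bounded, via \eqref{ip2}, \eqref{ipholder}, \eqref{shift4}, by difference quotients of $V(\E u)$ plus the $x$-Hölder term, i.e. by the quantity already controlled in part (i); the factor $\max\{1,i_\phi'/2\}$ in $\varrho$ comes from converting the $L^2$-type Besov bound on $V(\E u)$ into an $L^{\widetilde\phi}$-type bound via \eqref{homogeneity-phi-star-shifted}, since $\widetilde\phi\circ\phi'$-type compositions scale with exponent $i_\phi'$.

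**Main obstacle.** The hardest part will be the $x$-Hölder term handled via \eqref{ipholder}: because $a$ is only fractionally differentiable in $x$ with a coefficient $k\in L^m_{\rm loc}$ rather than $L^\infty_{\rm loc}$, the naive difference-quotient manipulation does not close directly, which is exactly why the two-step scheme (a priori estimate under extra smoothness in Section \ref{sec:proof1}, then approximation in Section \ref{sec:proof2}) is unavoidable. One must carefully track how the constant $c_2$ depends on $\|k\|_{L^m(B_{2R})}$ and how the higher integrability exponent $\varsigma$ is produced, and ensure that the approximating operators $a_\eps$ (e.g. mollifications in $x$) satisfy \eqref{ip1}--\eqref{ipholder} with uniform constants and that the corresponding solutions $(u_\eps,\pi_\eps)$ converge strongly enough in $W^{1,\phi}_{\rm loc}$ to pass the modular bound \eqref{oct109} to the limit — this passage relies on lower semicontinuity of the modular and on the uniform bounds from the a priori estimate, but requires care because $\phi$ need not be a power and norms are being avoided throughout. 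A secondary technical point is justifying the use of the second-difference test function: one works with a time-independent analogue of Steklov/mollified difference quotients and uses \eqref{july10}, \eqref{est-diff-quotient}, and \eqref{Bogovski-diff-quotients-2} to make all manipulations rigorous before letting the regularization parameter go to zero.
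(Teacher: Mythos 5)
Your overall architecture mirrors the paper's: establish \eqref{oct109} under extra hypotheses, remove them by mollifying $a$ and $f$ and passing to the limit, and derive the pressure estimate by duality against $L^\phi$ via the Bogovski\u\i\ operator. The test-function construction (a Bogovski\u\i\ correction of $\tau_{-h}(\zeta^2\tau_h u)$ to restore $\dive\varphi=0$), the splitting of $\tau_h a(x,\E u)$ into the elliptic piece $\mathcal A_h$ and the $x$-H\"older piece $\mathcal B_h$, and the $\theta$-interpolation of the $f$-term via \eqref{psi-assumption-main} and the modular Poincar\'e--Sobolev inequality \eqref{SPnew1}, together with the homogeneity bounds \eqref{homogeneity-phi-star} and the exponent bookkeeping producing $\sigma$ and $\varrho$, all match the paper's route in substance.

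There is, however, a genuine gap in your treatment of the $x$-H\"older term $\mathrm I_1$. You propose to control $\int\zeta^2(k(x)+k(x+he_i))^2\phi(\E(h))$ by appealing to a ``Gehring-type'' higher integrability bound $\phi(|\nabla u|)\in L^\varsigma_{\rm loc}$ and H\"older's inequality. Two things go wrong. First, you apply \eqref{ipholder} with the original pair $(\alpha,m)$; the paper must first invoke Lemma~\ref{lem:ipholder-beta} to trade $\alpha$ down to $\sigma$ and $m$ up to $m_\sigma>n/\sigma$, and without this step the H\"older exponent on the $k$-factor is uncontrolled. Second, and more importantly, the dual exponent you then need on $\phi(\E u)\approx|V(\E u)|^2$ is $m_\sigma/(m_\sigma-2)$, i.e.\ $V(\E u)\in L^q_{\rm loc}$ with $q$ ranging up to $2n/(n-2\sigma)$ as $m_\sigma\downarrow n/\sigma$. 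A reverse-H\"older/Gehring improvement gives only $L^{2+\varepsilon}$ with a small, structurally determined $\varepsilon$, which is quantitatively far short of $2n/(n-2\sigma)$; it is not what the paper uses and would not close the estimate. The paper instead runs a \emph{self-referential} argument: it first records (from \cite[Theorem~1.2]{GPS}, under the temporary $L^\infty$ assumptions on $k$ and $f$) that $V(\E u)\in B^{\sigma,2,\infty}_{\rm loc}$ is already finite, then applies the Besov--Poincar\'e embedding \eqref{besov-emb} to bound $\|V(\E u)\|_{L^{2n/(n-2\beta)}}$ on the right by a fractional power of the very Besov seminorm $\sup_h\int|\tau_h V(\E u)|^2/|h|^{2\sigma}$ being estimated, and finally absorbs via Young's inequality with a small parameter and the iteration Lemma~\ref{lem:Giaq}. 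Without this loop---or some equally strong integrability input---your version of the $\mathrm I_1$ estimate does not close, and this is precisely the step that determines the admissible range of $\sigma$.
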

As explained in Section \ref{sec:intro}, a  result parallel to Part (i) of Theorem \ref{main} holds for local solutions to the system \eqref{elliptic}. Recall that a function $u \in W^{1,\phi}_{\rm loc}(\Omega, \Rn)$ is said to be a weak solution to \eqref{elliptic} if 
\begin{equation}\label{weak-elliptic}
  \int_{\Omega} a(x,\E
  u)\cdot\E\varphi \,\,\dx
  =
  \int_{\Omega}f\cdot\varphi\,\dx
\end{equation}
 for every  $\varphi\in  W^{1,\phi}(\Omega,\real^n)$ with compact support in $\Omega$.
 
\begin{thm}\label{main1}
   Assume that $a$ and $f$ are as in Theorem \ref{main}. Let $u$ be a local weak solution to the system \eqref{elliptic}. Then, the same conclusions as in Part (i) of Theorem \ref{main} hold.
\end{thm}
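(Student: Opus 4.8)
The plan is to derive Theorem~\ref{main1} from the argument for Part~(i) of Theorem~\ref{main} by a straightforward simplification: since the elliptic system \eqref{elliptic} carries no divergence-free constraint, the weak formulation \eqref{weak-elliptic} may be tested with any compactly supported $\varphi \in W^{1,\phi}(\Omega,\real^n)$, without the need to correct test functions so that $\dive \varphi = 0$. Concretely, I would revisit the two-step scheme announced in the introduction. In the first step (the analogue of Section~\ref{sec:proof1}), assume in addition that $a(\cdot,P)$ and $f$ enjoy the extra regularity that makes the difference-quotient manipulations rigorous, and establish the a priori Besov estimate \eqref{oct109} for $V(\E u)$. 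In the second step (the analogue of Section~\ref{sec:proof2}), remove these extra hypotheses by approximation.

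For the a priori estimate, fix a ball $B_{2R}\Subset\Omega$ and a cutoff $\eta\in C_0^\infty(B_{R'})$ with $\eta\equiv 1$ on a smaller ball, $|\nabla\eta|\lesssim 1/(R'-r)$. The test function is $\varphi = \tau_{-h}\big(\eta^2\,\tau_h u\big)$; note that in the Stokes case one has to further compose with the Bogovski\u\i{} operator $\mathcal B$ from \eqref{july15}--\eqref{Bogovski-diff-quotients-2} to restore $\dive\varphi=0$ and, correspondingly, control the pressure coupling term $\int \pi\,\dive\varphi$, whereas here neither ingredient is needed, which is exactly the promised simplification. Plugging $\varphi$ in, moving the translation by \eqref{july19}, and using the product rule \eqref{product-rule}, one arrives at the standard inequality: a good term
\[
\int_{B} \eta^2\,\big(a(x+he_i,\E u(x+he_i))-a(x,\E u)\big)\cdot \tau_h\E u\,\dx,
\]
which by the monotonicity \eqref{ip1} together with \eqref{shift4'} and \eqref{a-coercivity-split} bounds $\int \eta^2\,|\tau_h V(\E u)|^2$ from below, against bad terms of three types: (a) a term coming from $\nabla(\eta^2)$, handled by the growth bound \eqref{ip2}, Young's inequality in the shifted form \eqref{eq:young}, and \eqref{est-translation}, \eqref{est-diff-quotient}; (b) the $x$-oscillation term, where \eqref{ipholder} produces the factor $(k(x+he_i)+k(x))|h|^\alpha\phi'(|\E u(x+he_i)|)$ — here one splits off $|h|^\alpha$, uses $k\in L^m_{\rm loc}$ with $m>n/\alpha$ together with H\"older's inequality and the higher integrability of $\nabla u$ (equivalently the self-improving integrability of $\phi(|\nabla u|)$) to absorb the contribution; and (c) the right-hand side term $\int f\cdot\varphi$, estimated via \eqref{eq:young}, the Orlicz--Sobolev embedding \eqref{OS-emb}/\eqref{OS-emb'}, and crucially the interpolation hypothesis \eqref{psi-assumption-main}, which is precisely what converts the $L^\psi$ bound on $f$ into a gain of differentiability measured by $\theta$. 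Dividing by $|h|^{2\sigma}$ with $\sigma=\min\{\alpha,\theta,\theta s_\phi'/2\}$, taking the supremum over small $|h|$, and iterating over the ball radii to absorb the term with $\nabla\eta$ (a standard iteration lemma) yields \eqref{oct109}, hence \eqref{reg-u} by the definition \eqref{besov} of the Besov space.

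The approximation step follows the scheme of Section~\ref{sec:proof2}: mollify $a$ in $x$ and $f$ to obtain $a_\eps$, $f_\eps$ satisfying the structural conditions \eqref{ip1}--\eqref{ipholder} uniformly in $\eps$ (with $k$ replaced by a mollified $k_\eps$ bounded in $L^m_{\rm loc}$) plus the extra smoothness needed for Step~1; solve the regularized elliptic system for $u_\eps$; apply the a priori estimate \eqref{oct109} uniformly in $\eps$; then pass to the limit, using the uniform Besov bound to extract weak/strong limits of $V(\E u_\eps)$, the monotonicity \eqref{ip1} (Minty's trick, in its Orlicz form via \eqref{a-coercivity-split}) to identify the limit with $V(\E u)$, and lower semicontinuity of the Besov seminorm to conclude \eqref{reg-u} and the estimate \eqref{oct109} for $u$ itself. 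Because there is no pressure and no Bogovski\u\i{} operator, this limiting argument is cleaner than for \eqref{equa}: one does not need to construct a limiting $\pi$ nor to control $\|\mathcal B\|$ uniformly, and the compactly supported test functions are stable under the approximation without modification.

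The main obstacle is the same one that forces the two-step structure in the Stokes case and that the quoted simplification does \emph{not} remove: the weakness of the $x$-dependence hypothesis \eqref{ipholder}. With only Hajlasz--Sobolev regularity of $a(\cdot,P)$ one cannot directly run difference quotients on $a(x,\E u)$, so the bookkeeping in term (b) above — splitting the Hajlasz gradient factor $k(x)$ off the $|h|^\alpha$, pairing it against $\phi'(|\E u|)$ via a three-exponent H\"older inequality calibrated to the Simonenko indices $i_\phi,s_\phi$ and to $m>n/\alpha$, and absorbing everything using the Gehring-type self-improved integrability of $\phi(|\nabla u|)$ — is the delicate part, and it is also what fixes the exponents $\varkappa,\varsigma$ in the statement. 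Getting the threshold $\sigma=\min\{\alpha,\theta,\theta s_\phi'/2\}$ exactly, rather than something smaller, requires tracking carefully which term ((b) with exponent $\alpha$, or (c) with exponent $\theta$, with the extra factor $s_\phi'/2$ coming from converting $\phi_{|\E u|}$-quantities into $|V(\E u)|^2$-quantities via \eqref{a-coercivity-split} and \eqref{homogeneity-phi-shifted}) is binding.
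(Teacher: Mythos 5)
Your overall structure matches the paper exactly: the paper itself only sketches the proof of Theorem \ref{main1}, observing that one takes $\varphi=\tau_{-h}(\zeta^2\tau_h u)$ directly as a test function in \eqref{weak-elliptic} without the Bogovski\u\i{} correction $w=\mathcal Bg$, so that the terms $\mathrm I_3$ and $\mathrm I_5$ in \eqref{def-I-1-5} never appear, and then runs the a priori estimate plus mollification argument of Sections \ref{sec:proof1}--\ref{sec:proof2}. Your two-step outline (a priori estimate under $k,f\in L^\infty_{\rm loc}$, then approximation) is the paper's.

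One concrete mis-step, however, concerns the absorption of the $x$-oscillation term (your term (b), the paper's $\mathrm I_1$). You invoke ``Gehring-type self-improved integrability of $\phi(|\nabla u|)$''. This is not the mechanism the paper uses, and I do not think Gehring alone suffices. The paper's treatment of $\mathrm I_1$ in \eqref{pre-I-1-bound}--\eqref{est:fractional0} hinges on the fact that, under the additional assumptions \eqref{Kinf}--\eqref{finf}, the earlier result \cite[Theorem 1.2]{GPS} already gives $V(\E u)\in B^{\upsilon,2,\infty}_{\rm loc}$ with $\upsilon\geq\sigma$ \emph{a priori} (Equation \eqref{july25}). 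From this, the Besov--Sobolev embedding \eqref{besov-emb} yields $V(\E u)\in L^q_{\rm loc}$ for any $q<\tfrac{2n}{n-2\beta}$, and the three-exponent interpolation \eqref{july33}--\eqref{july34} then produces a term involving the Besov seminorm of $V(\E u)$ with a small coefficient, which can be reabsorbed via the iteration Lemma~\ref{lem:Giaq}. A Gehring-type lemma would only give $\phi(|\nabla u|)\in L^{1+\epsilon}_{\rm loc}$ for some uncontrolled small $\epsilon$, which neither tracks the precise dependence on $m$, $\sigma$, $\beta$ in \eqref{oct5} nor sets up the absorption argument: the key feature here is that the ``higher integrability'' must be expressed back in terms of the quantity being estimated (the Besov seminorm), so that it can be absorbed. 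Also, in the approximation step the paper proves the strong convergence \eqref{july70} directly by testing the two equations against $u_\ell-u$ and exploiting \eqref{B1}, rather than via Minty's trick followed by lower semicontinuity; this is a milder deviation but worth noting since the dominated-convergence passage \eqref{aug1'}--\eqref{aug4} is what justifies Fatou in \eqref{aug5}.
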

 The proof of Theorem \ref{main1}
is omitted, as it just consists of a simplification of the proof of Theorem \ref{main}. Indeed, as  a close inspection of that proof reveals, the conclusions of Part (i) are derived via a suitable choice of divergence-free test functions $\varphi$ in \eqref{weak-equa}. They are constructed in two steps, the second one being needed to ensure that the divergence-free condition be fulfilled.
Thanks to the vanishing of $\dive \varphi$, the equations \eqref{weak-equa}
and  \eqref{weak-elliptic} agree. Since the latter is assumed to hold for non-necessarily divergence-free $\varphi$, one can just make use of the test functions $\varphi$ obtained in the first step. This entails an integral identity of the same form as in the proof of Theorem \ref{main}, save that some  terms are now missing. The  terms which are still present can be handled exactly in the same way.

\begin{rem}\label{rem:oct}
    By \eqref{july60}, the assumption \eqref{psi-assumption-main} implies that
    \begin{align}
        \label{july61}
       \phi_{n}^{-1}(t) \psi^{-1}(t)\le  c t
  \qquad\mbox{for}\,\, t\geq 1,
    \end{align}
    for some constant $c$.  Therefore, owing to \eqref{inverses}, there exist $c,t_0 >0$ such that
    \begin{align}
        \label{july62}
        \widetilde {\phi_n} (t)\leq \psi (ct) \quad \text{for $t \geq t_0$.}
    \end{align}
    Hence,  \eqref{psi-assumption-main} guarantees that \eqref{oct1} holds, and hence any function $f$ as in Theorems \ref{main} -- \ref{main1} fulfills \eqref{hpf}.
\end{rem}

\begin{rem}
    \label{recover}
    Theorem \ref{main} recovers, in particular, a special case of a result from \cite{GPS2} corresponding to  $\phi(t)=t^p$, with $p \geq 2$ and $m=\infty$. It also reproduces an instance   from \cite{GPS}, with the choice $\theta =1$  and $m=\infty$. 
\end{rem}
\begin{rem}
    \label{gradient-int}
The inclusion \eqref{reg-u} implies, in particular, that
\begin{align}
    \label{oct20}
    V(\E u) \in W^{\rho,2}_{\rm loc}(\Omega, \mathbb R^{n\times n})
\end{align}
for every $\rho \in (0,\sigma)$. This is a consequence of \eqref{inlc1}. 
\\ Moreover, thanks to the embedding of \cite[Theorem 8.1]{peetre}, the inclusion \eqref{reg-u} ensures that
\begin{align}
    \label{oct21}
    \phi(|\E u|) \in L^{\frac{n}{n-2\sigma}, \infty}_{\rm loc}(\Omega).
\end{align}
Here, $L^{\frac{n}{n-2\sigma}, \infty}_{\rm loc}(\Omega)$ denotes a (local) Marcinkiewicz space, also called weak Lebesgue space.
\end{rem}

\begin{rem}\label{sobolev-reg}
Assume that the  hypotheses of Theorems \ref{main} and \ref{main1} hold with $$\text{$\alpha =1$, \quad $\theta =1$, \quad  $s_\phi\leq 2$}.$$ Then, $\sigma=1$ and, since $B_{\rm loc}^{1,2,\infty}(\Omega, \mathbb R^{n\times n})= W^{1,2}_{\rm loc}(\Omega, \mathbb R^{n\times n})$, one has   that $$V(\E u) \in W^{1,2}_{\rm loc}(\Omega, \mathbb R^{n\times n}).$$
\end{rem}

We conclude this section with an application of Theorem \ref{main} to the case when $\phi$ is a power, and, more generally, a power times a power of a logarithm. Let us emphasize that, even in the former case, the conclusions about the borderline regime corresponding to a power which agrees with  the space dimension  $n$ are new. 

\begin{ex} {\rm Assume that 
$\phi (t)=t^p$ for some $p>1$.
\\ If $1<p<n$, then
$$\phi_n (t) \approx t^{\frac{np}{n-p}}.$$
Hence, the condition \eqref{psi-assumption-main}  holds with
\begin{align}
    \label{psi1}
    \psi (t) = t^{\frac{np}{({1-\theta}) p + n(p-1)}} \quad \text{for $t \geq 0$.}
\end{align}
 If $p=n$, then
$$\phi_n (t) \approx e^{t^{n'}}-1 \quad \text{near infinity.}$$
 The condition \eqref{psi-assumption-main} is thus fulfilled with 
$$\psi (t) \approx t^{\frac n{n-\theta}}(\log t)^{\frac{({1-\theta})(n-1)}{n- \theta}} \quad \text{near infinity.}$$
If $p>n$, then 
$$\phi_n(t) = \infty \quad \text{near infinity,}$$
and \eqref{psi-assumption-main} holds with
$$\psi (t) \approx t^{\frac{p}{p-\theta}} \quad \text{near infinity.}$$
}
\end{ex}

\begin{ex} {\rm Assume that $\phi (t)=t^p \log^\beta (1+t)$ near infinity, for some $p>1$ and $\beta \in \mathbb R$.
\\ If $1<p<n$, then 
$$\phi_n (t) \approx t^{\frac{np}{n-p}}(\log (1+t))^{\frac{n\beta}{n-p}}\quad \text{near infinity.}$$
The condition \eqref{psi-assumption-main} therefore holds with
$$\psi (t) \approx t^{\frac{np}{({1-\theta}) p + n(p-1)}}(\log (1+t))^{-\frac{n\beta}{({1-\theta}) p + n(p-1)}}\quad \text{near infinity.}$$
If  $p=n$ and $\beta <n-1$, then
$$\phi_n (t) \approx e^{t^{\frac{n}{n-1-\beta}}}-1 \quad \text{near infinity.}$$
Hence, the condition \eqref{psi-assumption-main} is fulfilled with
$$\psi (t) \approx t^{\frac{n}{n -\theta}}(\log (1+t))^{\frac{({1-\theta})(n-1)-\beta}{n-\theta}}\quad \text{near infinity.}$$
If  $p=n$ and $\beta =n-1$, then
$$\phi_n (t) \approx e^{e^{t^{n'}}}-e \quad \text{near infinity,}$$
and \eqref{psi-assumption-main} holds with
$$\psi (t) \approx t^{\frac{n}{n -\theta}}(\log (1+t))^{-\frac{\theta (n-1)}{n-\theta}}(\log(\log (1+t)))^{\frac{({1-\theta})(n-1)}{n-\theta}}\quad \text{near infinity.}$$
If either $p=n$ and $\beta >n-1$, or $p>n$, then
$$\phi_n(t) = \infty \quad \text{near infinity,}$$
whence \eqref{psi-assumption-main} holds with
$$\psi (t) \approx t^{\frac{p}{p -\theta}}(\log (1+t))^{-\frac{\theta \beta}{p-\theta}}\quad \text{near infinity.}$$}

\end{ex}

\section{A modular Poincar\'e-Sobolev inequality  on domains with finite measure}\label{sec:SP}

The definition \eqref{phin} of the Sobolev conjugate $\phi_n$ of a Young function $\phi$ requires that the condition \eqref{conv0} be fulfilled. When dealing with embedding theorems on sets of finite measure, such as \eqref{OS-emb} and \eqref{OS-emb'}, and hence of norm inequalities, this is not a restriction. Indeed, as hinted above, replacing $\phi$ with a Young function equivalent near infinity and satisfying \eqref{conv0} results in equivalent norms in $W^{1,\phi}_0(\Omega, \rn)$ and $W^{1,\phi}(\Omega, \rn)$.

Our proofs require, however, a sharp Poincar\'e-Sobolev type inequality in integral form. The
replacement of $\phi$ described above is not possible in this kind of inequalities without changing their form. In order to avoid the unnecessary restriction \eqref{conv0} on the function $\phi$, 
 we establish a Poincar\'e-Sobolev inequality, for functions in $W^{1,\phi}_0(\Omega, \rn)$, in open sets $\Omega$ with finite measure, involving a variant of the function $\phi_n$, denoted by $ \phi_{0,n}$. The crucial point is that this new function $  \phi_{0,n}$ is well defined for every Young function $\phi$, and is equivalent to $\phi_n$ near infinity, independently of the specific modification of  $\phi$ near zero used in the definition of $\phi_n$. This  property enables us to make use of $\phi_{0,n}$ instead of $\phi_n$ throughout our proofs. Indeed, since  the key assumption 
 \eqref{psi-assumption-main} only involves the behavior of $\phi_n$ near infinity, it can be equivalently formulated with $\phi_n$ replaced with $\phi_{0,n}$.

The function $\phi_{0,n}$ is  defined through the following steps.
Let $\phi$ be any  Young function such that $\phi (1)>0$. Of course, this is not a restriction, since this assumption is always satisfied, up to scaling. In particular, it is automatically fulfilled if $\phi \in\Delta_2$, an hypothesis which, however, is not required for the results of the present section.
Define the function $K_n: [0, \infty) \to [0, \infty)$ as 
\begin{equation}
    \label{K}
    K_n(t)= \chi_{[1, \infty)}(t)\bigg(\int_1^t\Big(\frac \tau{\phi (\tau)}\Big)^{\frac{1}{n-1}}\d \tau\bigg)^{\frac 1{n'}} \quad \text{for $t \geq 0$,}
\end{equation}
and $\overline \phi_{0,n}: [0, \infty) \to [0, \infty)$
as
\begin{equation}
    \label{B}
    \overline \phi_{0,n}(t) = \chi_{[1, \infty)}(t)\bigg(\displaystyle\frac{\phi(K_n^{-1}(t-1))}{K_n^{-1}(t-1)}(t-1)\bigg)^{n'}\quad \text{for $t \geq 0$,}  
\end{equation}
where $K_n^{-1}: [0, \infty) \to [1, \infty]$ denotes the generalized right-continuous inverse of $K_n$. In particular,  we have  that $K_n^{-1}(0)=1$.
\\
Notice that the function $\overline \phi_{0,n}$ need not be convex.  However, since $\overline \phi_{0,n}(t)/(t-1)$ is non-decreasing, it is equivalent to the convex function \begin{align}
    \label{july2}
   \phi_{0,n}(t)= \int_0^t \frac{\overline \phi_{0,n}(\tau)}{\tau-1}\d \tau.
\end{align}
Indeed, one has that
\begin{align}
    \label{SP1}
 \overline \phi_{0,n}(t/2)  \leq   \phi_{0,n}(t)\leq  \overline \phi_{0,n}(t).
\end{align}
The equivalence of the function $\phi_{0,n}$ to $ \phi_{n}$ near infinity is the content of the following result.
\begin{prop}
    \label{equiv}
    Let $\phi$ be a Young function such that $\phi(1)>0$ and let  $ \phi_{0,n}$ be the function given by \eqref{july2}.
    Let
    $\phi_n$ be defined as in \eqref{phin},  with $\phi$ modified near $0$, if necessary, in such a way that \eqref{conv0} holds.
    Then, there exist positive constants $c_1, c_2, t_0$ such that
    \begin{align}
        \label{june10}
        \phi_n(c_1t) \leq  \phi_{0,n}(t) \leq \phi_n(c_2t) \quad \text{for $t\geq t_0$.}
    \end{align}
\end{prop}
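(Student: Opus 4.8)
The plan is the following. To begin with, I would dispose of the degenerate case $\int^\infty(\tau/\phi(\tau))^{1/(n-1)}\,\d\tau<\infty$, in which $H_n$ and $K_n$ are bounded, so that $H_n^{-1}$ and $K_n^{-1}$ take the value $\infty$ for large arguments, while $\lim_{s\to\infty}\phi(s)/s=\infty$ (otherwise $\tau/\phi(\tau)$ would be bounded away from $0$ and the integral would diverge); hence $\phi_n$, $\overline\phi_{0,n}$, and therefore $\phi_{0,n}$ too, equal $\infty$ near infinity and \eqref{june10} holds trivially. From now on assume $\int^\infty(\tau/\phi(\tau))^{1/(n-1)}\,\d\tau=\infty$. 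Then $\phi<\infty$ on $(0,\infty)$, the function $g(\tau):=(\tau/\phi(\tau))^{1/(n-1)}$ is positive and, by \eqref{incr}, non-increasing, and $K_n(r)\to\infty$ as $r\to\infty$. Since \eqref{Hn} and the definition of $K_n$ share the exponent $1/n'=(n-1)/n$, splitting the integral at $1$ yields $H_n(r)^{n'}=C_0+K_n(r)^{n'}$ for every $r\ge1$, where $C_0:=\int_0^1 g\,\d\tau<\infty$ precisely because of \eqref{conv0} (recall that the modification of $\phi$ near $0$ entering the definition of $\phi_n$ leaves $\phi$, $K_n$ and $\overline\phi_{0,n}$ unchanged on $[1,\infty)$). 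By \eqref{SP1} it is enough to prove $\phi_n(c_1t)\le\overline\phi_{0,n}(t)\le\phi_n(c_2t)$ for $t$ large, with $\overline\phi_{0,n}$ given by \eqref{B}.

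For the lower bound, fix $t$ so large that $r:=K_n^{-1}(t-1)\ge2$. Since $g$ is non-increasing, $(t-1)^{n'}=K_n(r)^{n'}=\int_1^r g\,\d\tau\ge\tfrac r2\,g(r)=\tfrac12\,r^{n'}\phi(r)^{-1/(n-1)}$, using $r\cdot r^{1/(n-1)}=r^{n'}$. Inserting this into \eqref{B} and exploiting the cancellation $n'-\tfrac1{n-1}=1$ gives $\overline\phi_{0,n}(t)=(\phi(r)/r)^{n'}(t-1)^{n'}\ge\tfrac12\,\phi(r)$. On the other hand $H_n(r)^{n'}=C_0+(t-1)^{n'}\ge(t-1)^{n'}$ forces $r\ge H_n^{-1}(t-1)$, hence $\phi(r)\ge\phi(H_n^{-1}(t-1))=\phi_n(t-1)$; combined with convexity and monotonicity of $\phi_n$ this yields $\overline\phi_{0,n}(t)\ge\tfrac12\phi_n(t-1)\ge\phi_n(t/4)$ for $t\ge2$.

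The upper bound is where the actual work lies. The naive estimate $(t-1)^{n'}=K_n(r)^{n'}\le H_n(r)^{n'}$ only produces $\overline\phi_{0,n}(t)\le(\phi(r)/r)^{n'}H_n(r)^{n'}$, whose right-hand side is genuinely larger than $\phi(r)$ in general (e.g. it behaves like $\phi(r)\log r$ when $\phi(t)\eqsim t^n$ near infinity), so one must not keep the argument of $\phi$ pinned at $r$. The remedy I would use is to introduce $s:=H_n^{-1}(2t)$. For large $t$ one has $C_0+(t-1)^{n'}\le t^{n'}$ (because $t^{n'}-(t-1)^{n'}\to\infty$), so $H_n(r)^{n'}\le t^{n'}\le(2t)^{n'}=H_n(s)^{n'}$, whence $r\le s$, and moreover $H_n(s)^{n'}=2^{n'}t^{n'}\ge2\,H_n(r)^{n'}$; consequently $\int_r^s g\,\d\tau=H_n(s)^{n'}-H_n(r)^{n'}\ge H_n(r)^{n'}\ge(t-1)^{n'}$. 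Since $g$ is non-increasing, $(t-1)^{n'}\le\int_r^s g\,\d\tau\le(s-r)\,g(r)\le s\,r^{1/(n-1)}\phi(r)^{-1/(n-1)}$. Substituting into \eqref{B} and using $\tfrac1{n-1}-n'=-1$ one gets
\[
\overline\phi_{0,n}(t)=\Big(\tfrac{\phi(r)}{r}\Big)^{n'}(t-1)^{n'}\le s\,\frac{\phi(r)}{r}\le s\,\frac{\phi(s)}{s}=\phi(s)=\phi_n(2t),
\]
where the penultimate inequality uses $r\le s$ together with the monotonicity \eqref{incr} of $\phi(\cdot)/(\cdot)$, and the last equality is the definition \eqref{phin}.

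Putting the two bounds together and invoking \eqref{SP1} once more, one obtains \eqref{june10} for $t$ past a suitable threshold $t_0$, with, say, $c_1=\tfrac18$ and $c_2=2$. I expect the only real obstacle to be the upper bound, and within it the realization that the correct comparison point for $\phi$ is $s=H_n^{-1}(2t)$ rather than $r=K_n^{-1}(t-1)$; once that substitution is made, the bookkeeping is painless because the powers of $r$ and of $\phi(r)$ cancel exactly, thanks to $n'-\tfrac1{n-1}=1$.
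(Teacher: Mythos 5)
Your proof is correct but takes a genuinely different route from the paper's. The paper first derives the closed-form identity $\phi_{0,n}(t) = \frac{1}{n'}\int_1^{K_n^{-1}(t-1)}\frac{\phi(s)}{s}\,\d s$ by undoing the change of variable implicit in \eqref{july2}, then sandwiches that integral between $\phi\big(K_n^{-1}(t-1)/2\big)$ and $\phi\big(K_n^{-1}(t-1)\big)$, and finally proves a separate two-sided comparison between $K_n^{-1}$ and $H_n^{-1}$ near infinity. You bypass the integral representation entirely, staying with $\overline\phi_{0,n}$ via \eqref{SP1}: the lower bound is a short comparison of $K_n(r)^{n'}$ with $\tfrac r2\,g(r)$, while the upper bound --- which, as you correctly observe with the example $\phi(t)\eqsim t^n$ producing an extra logarithmic factor, cannot be obtained by naively replacing $K_n$ by $H_n$ at the same point --- is rescued by introducing the auxiliary point $s = H_n^{-1}(2t)$, bounding $\int_r^s g$ on both sides, and exploiting the cancellation $n' - \tfrac1{n-1} = 1$ together with the monotonicity \eqref{incr}. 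Your route is somewhat more elementary (no integral reformulation, no stand-alone lemma on $K_n^{-1}$ versus $H_n^{-1}$), at the price of a more delicate upper bound; it also explicitly disposes of the degenerate case \eqref{convinf}, which the paper handles only implicitly.
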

\begin{proof} By the definition of $  \phi_{0,n}(t)$ and changes of variables, one has that
\begin{align}
    \label{july1}
     \phi_{0,n}(t)& =  \chi_{[1, \infty)}(t) \phi_{0,n}(t)= \chi_{[1, \infty)}(t)\int_0^{t-1} \frac{\overline \phi_{0,n}(r+1)}{r}\d r  = 
     \chi_{[1, \infty)}(t) \int_0^{t-1}\bigg(\displaystyle\frac{\phi(K_n^{-1}(r))}{K_n^{-1}(r)}r\bigg)^{n'}\frac{1}{r}\,\d r
     \\ \nonumber &=\frac{\chi_{[1, \infty)}(t) }{n'}\int_1^{K_n^{-1}(t-1)}\frac{\phi(s)}{s}\,\d s   \qquad\mbox{for}\,\, t\geq 0.
\end{align}
By the property \eqref{incr}, 
\begin{equation}
    \label{july4}
  \phi (K_n^{-1}(t-1)/2)  \leq \int_1^{K_n^{-1}(t-1)}\frac{\phi(s)}{s}\,\d s \leq \phi ({K_n^{-1}(t-1)})
\end{equation}
if $t \geq K_n(2)+1$. We claim that 
\begin{align}    \label{july5}
    \frac{K_n^{-1}(t-1)}{2}\geq  K_n^{-1}(t/4-1) \quad \text{if $t \geq t \geq 4(K_n(3/2)+1)$.}
\end{align}
To verify this claim, observe that, as $K_n$ is a concave function in $[1, \infty)$, the function $B: [0, \infty) \to [0, \infty)$, given by
$$B(t)= \chi_{[1, \infty)}(t) (K_n^{-1}(t-1)-1) \quad \text{for $t \geq 0$,}$$
is a Young function. By the property \eqref{{kt}} applied to the function $B$, 
\begin{align}
    \label{july6}
\frac{K_n^{-1}(t-1)}{2} \geq K_n^{-1}(t/2-1)-1/2 \quad \text{if $t \geq 2$.}
\end{align}
The   property \eqref{{kt}} again ensures that
\begin{align}
    \label{july7}
 K_n^{-1}(t/2-1) \geq 2 K_n^{-1}(t/4-1)-1 \quad \text{if $t \geq 4$.}
\end{align}
On the other hand,
\begin{align}
    \label{july8}
2 K_n^{-1}(t/4-1)-1 \geq \quad  K_n^{-1}(t/4-1)+ 1/2 \quad \text{if $t \geq 4(K_n(3/2)+1)$.}
\end{align}
Equation \eqref{july5} follows from
\eqref{july6}--\eqref{july8}.
\\Combining Equations \eqref{july1} --  \eqref{july5}  tells us that
\begin{align}
    \label{july10'}
    \frac{1}{n'}\phi(K_n^{-1}(t/4-1)) \leq  \phi_{0,n}(t) \leq \frac{1}{n'}\phi(K_n^{-1}(t-1))
\end{align}
near infinity. 
\\ Now, observe that there exist positive constants $c_1$
and $c_2$ such that
\begin{align}
    \label{july11}
    H_n^{-1}(c_1t) \leq K_n^{-1}(t-1) \leq H_n^{-1}(c_2t) \quad \text{near infinity,}
\end{align}
where $H_n$ is defined with $\phi$ modified near zero, if necessary, in such a way that 
   the condition \eqref{conv0} is fulfilled. Indeed, Equation \eqref{july11}
   is equivalent to
   \begin{align}
    \label{july12'}
    H_n(s)/{c_2} \leq K_n(s) +1\leq H_n(s)/c_1 \quad \text{near infinity,}
\end{align}
and these inequalities hold by the very definitions of $H_n$ and $K_n$. 
Equation \eqref{june10} follows from \eqref{july10'}, \eqref{july11}, and \eqref{{kt}}. 
\end{proof}

The Poincar\'e-Sobolev inequality 
 this section is devoted to is stated in
Theorem \ref{SPnew} below. The critical step in its proof is an
interpolation 
inequality provided by the subsequent Theorem \ref{interpolation}.
These results 
 are variants of those from \cite{cianchi_CPDE}. As mentioned above, the novelty here is that the assumption \eqref{conv0}  can be dispensed with in both inequalities, at the cost of an additional term, depending on the Lebesgue measure $|\Omega|$ of $\Omega$.

\begin{thm}
    \label{SPnew}
    Let $\Omega$ be an open set in $\rn$ with $|\Omega|<\infty$ and let $\phi$ be a Young function such that $\phi(1)>0$. Then, there exists a constant $c=c(n)$ such that
    \begin{align}
        \label{SPnew1}
        \int_\Om   \phi_{0,n}\Bigg(\frac{c|u(x)|}{(|\Omega|+ \int_\Omega \phi(|\nabla u|)\d y)^{1/n}}\Bigg)\d x \leq \int_\Omega \phi(|\nabla u|)\d x
    \end{align}
    for every $u \in W^{1,\phi}_0(\Omega)$.
\end{thm}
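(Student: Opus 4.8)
The plan is the classical symmetrization route to Orlicz--Sobolev inequalities, carried out so that only the values of $\phi$ on $[1,\infty)$ -- through $K_n$ and $\phi_{0,n}$ -- are ever used, the defect near the origin being absorbed by the additive term $|\Omega|$. Since replacing $\phi$ near zero is not allowed in an inequality of modular type without altering its shape, this rearrangement of the classical argument is what the proof has to supply.

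\textbf{Step 1 (Symmetrization).} Let $u^\star$ be the spherically symmetric decreasing rearrangement of $u$ on the ball $B$ centred at the origin with $|B|=|\Omega|$. Since $u\in W^{1,\phi}_0(\Omega)$ and $|\Omega|<\infty$, a Young function being super-linear near infinity forces $\nabla u\in L^1(\Omega)$, hence $u\in W^{1,1}_0(\Omega)$ and $u^\star\in W^{1,1}_0(B)$, with $u^\star$ vanishing on $\partial B$. Because $\phi_{0,n}$ is non-decreasing (and $\phi_{0,n}\equiv 0$ on $[0,1]$), equimeasurability gives $\int_\Omega\phi_{0,n}(c|u|/\lambda)\,\dx=\int_B\phi_{0,n}(cu^\star/\lambda)\,\dx$ for every $\lambda>0$, while the modular P\'olya--Szeg\"o inequality yields $\int_B\phi(|\nabla u^\star|)\,\dx\le\int_\Omega\phi(|\nabla u|)\,\dx=:I$. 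It therefore suffices to prove \eqref{SPnew1} for $\Omega=B$ and $u=u^\star$, radial and non-increasing in $|x|$.

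\textbf{Step 2 (One dimension).} Writing $u^\star(x)=f(\omega_n|x|^n)$ with $f:[0,L]\to[0,\infty)$ non-increasing, $f(L)=0$, and $L:=|\Omega|$, the substitution $s=\omega_n r^n$ turns the gradient bound into
\begin{equation*}
\int_0^L\phi\big(n\omega_n^{1/n}s^{1/n'}(-f'(s))\big)\,ds\le I
\end{equation*}
and the claim into $\int_0^L\phi_{0,n}\big(cf(s)/(L+I)^{1/n}\big)\,ds\le I$. As $\phi_{0,n}$ vanishes on $[0,1]$, only the set where $cf(s)>(L+I)^{1/n}$ contributes; there, starting from $f(s)=\int_s^L(-f'(r))\,dr$, I would isolate the weight $r^{-1/n'}$ and apply H\"older's inequality in Orlicz spaces (equivalently, the Young inequality for the couple $(\phi,\widetilde\phi)$ in rescaled form), whose $L^{\widetilde\phi}$-norm of $r^{-1/n'}$ over $(s,L)$ is exactly what is governed by $K_n$. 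This bounds $f(s)$ above by a quantity expressible through $K_n^{-1}$ evaluated at an argument comparable to $s/(L+I)+(L+I)^{-1}\int_s^L\phi(\cdots)\,dr$. Feeding that estimate into the identity $\phi_{0,n}(t)=\tfrac1{n'}\int_1^{K_n^{-1}(t-1)}\phi(\sigma)/\sigma\,d\sigma$ from \eqref{july1}, together with the monotonicity \eqref{incr}, the inequality \eqref{{kt}}, and the equivalence \eqref{SP1}, one collects the terms and reaches the desired modular bound. This one-dimensional estimate is precisely what I would package as the interpolation inequality stated below as Theorem \ref{interpolation}; once it is available, \eqref{SPnew1} for radial $u$, and then the general case via Step 1, is immediate.

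\textbf{Main obstacle.} The crux is the one-dimensional estimate of Step 2, where two requirements must be met at once. First, the argument must never invoke the behaviour of $\phi$ near $0$: this is the role of the truncation at level $1$ in the definition \eqref{K} of $K_n$ and of the additive $|\Omega|$ in the normalization, which together cap the contribution of the power weight $r^{-1/n'}$ for $r$ small -- exactly where the classical proof would need the convergence condition \eqref{conv0}. Second, the final constant must depend on $n$ only, which forces careful bookkeeping of the sub-/super-homogeneity manipulations applied to $\phi$ and to the auxiliary Young function $B(t)=\chi_{[1,\infty)}(t)\,(K_n^{-1}(t-1)-1)$, in the same spirit as the proof of Proposition \ref{equiv}. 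By contrast, the symmetrization of Step 1 and the change of variables of Step 2 are routine and contribute only dimensional constants.
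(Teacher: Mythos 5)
Your proposal follows the same two-stage route as the paper: reduce to a one-dimensional inequality via a Pólya--Szegő step, then identify the reduced inequality as an instance of the abstract Orlicz-modular interpolation bound (Theorem \ref{interpolation}) applied to the operator $Tg(s)=\int_s^{|\Omega|}r^{-1/n'}|g(r)|\,dr$. The choice of spherically symmetric rearrangement on a ball rather than the one-dimensional decreasing rearrangement used in the paper is cosmetic; both lead to the same weighted one-dimensional estimate, and your justification that $u\in W^{1,1}_0$ (hence the rearrangement theory applies) is correct. Since you ultimately invoke Theorem \ref{interpolation} as a black box, the argument for Theorem~\ref{SPnew} itself closes.

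Where your proposal goes astray is in the heuristic you offer for \emph{why} the one-dimensional estimate holds. A Hölder/Young pairing $f(s)\le 2\|r^{-1/n'}\|_{L^{\widetilde\phi}(s,L)}\,\|r^{1/n'}(-f')\|_{L^\phi(s,L)}$ produces a bound in Luxemburg norms, not in modulars, and the passage from a norm bound of this type to the modular inequality $\int\phi_{0,n}(cf/(L+I)^{1/n})\le I$ is not the innocent bookkeeping you suggest: one loses a factor $\max\{1,I\}$ just to control the Luxemburg norm of $g$ in terms of $I$, and this defeats the sharp form of the statement. Nor is the identification of $\|r^{-1/n'}\|_{L^{\widetilde\phi}(s,L)}$ with a quantity governed by $K_n$ evident -- even in the model case $\phi(t)=t^p$, the two expressions have different scalings. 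The paper's proof of Theorem \ref{interpolation} avoids this altogether: it truncates $g$ at a level $\lambda(t)=K_p^{-1}(t-1)$, uses the $L^{p,1}\to L^\infty$ bound \eqref{inter2} to kill the low part entirely (the term $I$ in \eqref{SP7}), and uses the $L^1\to L^{p'}$ bound \eqref{inter1} on the high part, exactly reproducing the structure of $\phi_{0,p}$ through the identity \eqref{july1}. If you intend to present the Hölder route as an alternative proof of the interpolation lemma, you would need to address the norm-versus-modular mismatch and justify the $K_n$ identification; as a motivation for quoting Theorem \ref{interpolation}, the sketch is harmless but misleading about the mechanism.
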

In what follows, given any $p\in (1,\infty)$,  we denote by $\overline \phi_{0,p}$ and ${\phi}_{0,p}$ the functions defined as in \eqref{B} and \eqref{july2}, with $n$ replaced with $p$. Moreover, given a measure space $(M,\mu)$ and $p\in (1, \infty)$, we denote by $L^{p,1}(M, \mu)$ the Lorentz space on $(M,\mu)$ of those $\mu$-measurable functions $g : M \to \mathbb R$ which make the norm
$$\|g\|_{L^{p,1}(M, \mu)}= \int_0^\infty \mu(\{|g|>s\})^{\frac 1p}\d s$$
finite.
\begin{thm}
    \label{interpolation}
    Let $(M_1, \mu_1)$ and $(M_2, \mu_2)$ be measure spaces with $\mu_1(M_1)<\infty$. Let   $1<p<\infty$ and let $T$ be a sublinear operator such that
    \begin{align}
        \label{inter1}
        T: L^{1}(M_1, \mu_1) \to L^{p'}(M_2,\mu_2),
    \end{align}
    with norm $N_0$, 
    and
    \begin{align}
        \label{inter2}
        T: L^{p,1}(M_1, \mu_1) \to L^{\infty}(M_2,\mu_2),
    \end{align}
    with norm $N_1$. Assume that $\phi$ is a Young function such that $\phi(1)>0$.
    Then, there exists a constant $c=c(p, N_0, N_1)$ such that
    \begin{align}
        \label{inter3}
        \int_{M_2}  \phi_{0,p}\Bigg(\frac{c|Tg(y)|}{(\mu_1(M_1)+ \int_{M_1} \phi(|g|)\d \mu_1(x))^{1/p}}\Bigg)\d \mu_2(y) \leq \int_{M_1} \phi(|g|)\d \mu_1(x).
    \end{align}
\end{thm}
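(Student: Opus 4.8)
\textbf{Proof plan for Theorem \ref{interpolation}.}
The plan is to run the classical Calder\'on--Mityagin/Marcinkiewicz interpolation machinery, but keeping track of the \emph{modular} rather than the norm formulation, and inserting the harmless constant $\mu_1(M_1)$ to avoid any integrability condition near zero on $\phi$. First I would introduce the normalization: set
$\Lambda = \big(\mu_1(M_1)+\int_{M_1}\phi(|g|)\,\d\mu_1\big)^{1/p}$, and let $\tilde g = g/\Lambda$ (when $\Lambda=0$ the statement is trivial, so assume $\Lambda>0$). By homogeneity of the claimed inequality in $g$ it suffices to prove
$\int_{M_2}\phi_{0,p}(c|T\tilde g|)\,\d\mu_2 \le \int_{M_1}\phi(|g|)\,\d\mu_1$ under the normalization $\|\tilde g\|:=\mu_1(M_1)+\int_{M_1}\phi(|\tilde g|\Lambda)\,\d\mu_1\cdot\Lambda^{-p}\cdots$; more cleanly, I would work directly with the decreasing rearrangement $g^*$ of $g$ on $(0,\mu_1(M_1))$ with respect to $\mu_1$. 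Since $\phi_{0,p}$ is equivalent near infinity to $\overline\phi_{0,p}$ by \eqref{SP1}, and the left-hand side only needs to be controlled for large argument (small values cost at most $\mu_2$-measure times a constant, which can be absorbed or handled by enlarging $c$ and using $\mu_1(M_1)$ inside $\Lambda$), it suffices to estimate $\overline\phi_{0,p}(c|Tg(y)|/\Lambda)$ pointwise.

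The heart of the argument is a pointwise rearrangement bound for $Tg$. From \eqref{inter1} and \eqref{inter2}, sublinearity, and the standard splitting $g = g\chi_{\{|g|>g^*(s)\}} + g\chi_{\{|g|\le g^*(s)\}}$, one obtains for the decreasing rearrangement $(Tg)^*$ on $(M_2,\mu_2)$ an estimate of the form
\begin{equation*}
  (Tg)^*(r) \le c\Big(N_0\, r^{-1/p'}\!\!\int_0^{\rho(r)}\! g^*(s)\,\d s \;+\; N_1\!\!\int_{\rho(r)}^{\mu_1(M_1)}\! s^{-1/p}\, g^*(s)\,\d s\Big)
\end{equation*}
for a suitable choice $\rho(r)\eqsim \min\{r,\mu_1(M_1)\}$ (this is exactly the Calder\'on argument behind the $(1,p')$--$(L^{p,1},L^\infty)$ interpolation, cf. the computation in \cite{cianchi_CPDE}). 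Using Jensen's inequality on the first integral with the convex function $\phi$ (together with $\phi(t)/t$ nondecreasing, \eqref{incr}) and integration by parts / Hardy-type manipulation on the second, one bounds $(Tg)^*(r)$ by an expression of the form $r^{-1/p}\phi^{-1}$ of a truncated integral of $\phi(|g^*|)$, which is precisely the shape that makes $\overline\phi_{0,p}$ appear: recall from \eqref{july1} that $\phi_{0,p}$ is, up to the factor $1/p'$ and a change of variables, the primitive $\int_1^{K_p^{-1}(\cdot-1)}\phi(s)/s\,\d s$, so $\phi_{0,p}\big(c (Tg)^*(r)\big)$ is controlled by a tail of $\int\phi(|g^*|)$. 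The normalization by $\Lambda$ ensures the argument of $K_p^{-1}$ stays in the regime where \eqref{SP1} and the monotonicity estimates \eqref{july4}--\eqref{july5} from the proof of Proposition \ref{equiv} apply, and the extra $\mu_1(M_1)$ inside $\Lambda^p$ guarantees we never divide by a vanishing quantity even when $\int\phi(|g|)=0$.

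Finally I would integrate in $r$ over $(0,\mu_2(M_2))$ — or rather over the range where $(Tg)^*$ is large — and use Fubini together with the elementary inequality $\int_0^{\mu_1(M_1)}\!\big(\!\int_{\{s<\cdots\}}\!\d r\big)\d(\text{something})\le \int_{M_1}\phi(|g|)\,\d\mu_1$ to collapse the double integral, yielding \eqref{inter3}. The main obstacle I anticipate is \textbf{bookkeeping the modular normalization consistently}: unlike the norm version in \cite{cianchi_CPDE}, here the dilation factor $c/\Lambda$ inside $\phi_{0,p}$ must be threaded through the rearrangement estimate and through the equivalences \eqref{SP1}, \eqref{june10}, with all constants depending only on $p, N_0, N_1$ (and not on $\phi$ or $\Omega$); in particular one must check that the passage from $\phi_{0,p}$ to $\overline\phi_{0,p}$ and the use of $\phi(t)/t$ nondecreasing do not secretly require $\phi\in\Delta_2$ or the convergence condition \eqref{conv0}. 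Once the rearrangement inequality is in place with explicit constants, the rest is the routine one-dimensional Hardy/Jensen calculation, which I would not expand here. Theorem \ref{SPnew} then follows by applying Theorem \ref{interpolation} with $p=n$ to the operator $T$ sending $\nabla u$ to $u$ via the representation $u(x)=c_n\int_\Omega |x-y|^{1-n}\nabla u(y)\,\d y$ (valid for $u\in W^{1,1}_0(\Omega)$), whose $(1,n')$ and $(L^{n,1},L^\infty)$ bounds are the classical Riesz potential estimates.
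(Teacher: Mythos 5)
Your plan takes a genuinely different route from the paper. The paper never forms the rearrangement $(Tg)^*$; it rewrites the modular $\int_{M_2}\phi_{0,p}\big(|Tg|/(4k)\big)\,\d\mu_2$ as a weighted integral of the distribution function $\mu_2(\{|Tg|>2ks\})$ against $\frac{d}{dt}\big(\overline\phi_{0,p}(t)/t^{p'}\big)$, and decomposes $g=g_{\lambda(t)}+g^{\lambda(t)}$ by \emph{truncating at the height} $\lambda(t)=K_p^{-1}(t-1)$, which varies with the level $t$ of $|Tg|$. The normalizing constant $k$ in \eqref{k} is then chosen precisely so that the $L^{p,1}\to L^\infty$ bound \eqref{inter2} forces $\mu_2(\{|Tg_{\lambda(t)}|>ks\})=0$ for $s\ge t\ge 1$, i.e.\ the "good part'' contributes \emph{nothing} (the identity $I=0$ in \eqref{SP10}), and only the tail $g^{\lambda(t)}$ must be integrated by parts against $\overline\phi_{0,p}(t)/t^{p'}$ using \eqref{inter1}. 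Your proposal instead passes through the Calder\'on pointwise rearrangement inequality for sublinear operators of joint weak type $(1,p';p,\infty)$ and then plans to run a one-dimensional Jensen/Hardy computation on $(Tg)^*$. That rearrangement inequality is indeed available (sublinear $\Rightarrow$ quasi-linear in Bennett--Sharpley's sense, and boundedness $L^1\to L^{p'}$, $L^{p,1}\to L^\infty$ gives the required restricted weak types), so the route is legitimate; what it buys is conceptual familiarity, but what it costs is that you must then estimate $\phi_{0,p}$ of a \emph{sum} of two Calder\'on terms and carry the $K_p^{-1}$ change of variables and the $\mu_1(M_1)$ offset through the Hardy computation — which is exactly the content of \eqref{SP4}--\eqref{SP5} and \eqref{SP11}--\eqref{SP15} in the paper, recast in rearrangement form. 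Since you explicitly decline to expand that step, the proposal stops short of a proof precisely at the point where one would learn whether the approach threads the needle.

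Two smaller remarks. Your anticipated obstacle that passing between $\phi_{0,p}$ and $\overline\phi_{0,p}$ "might secretly require $\phi\in\Delta_2$ or \eqref{conv0}'' is not a real concern: the equivalence \eqref{SP1} holds for every Young function with $\phi(1)>0$ — removing \eqref{conv0} is the very reason $\phi_{0,p}$ was introduced — and because the conclusion \eqref{inter3} contains a free multiplicative constant $c$ \emph{inside} the argument of $\phi_{0,p}$, any factor-of-two losses from convexity splits (such as $\phi_{0,p}(a+b)\le\tfrac12\phi_{0,p}(2a)+\tfrac12\phi_{0,p}(2b)$) can be absorbed into $c$ without doubling, exactly as the paper's final bound $\int\phi_{0,p}(|Tg|/4k)\le\cdots$ illustrates. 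Also, for the deduction of Theorem \ref{SPnew}, the paper does not apply the interpolation theorem to the $n$-dimensional operator $\nabla u\mapsto u$; it first uses P\'olya--Szeg\H o \eqref{PS} to reduce to the one-dimensional operator $Tg(s)=\int_s^{|\Omega|}r^{-1/n'}|g(r)|\,\d r$, which is cleaner and makes the constants manifestly depend only on $n$.
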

\begin{proof}
Given a function $g:M_1\to\R$ and $t\geq 0$, define 
$$g_t= {\rm sign} (g) \min\{|g|,t\} \quad \text{and} \quad g^t= g- g_t.$$
To begin with, observe that
\begin{align}
    \label{SP4}
    \|g_t\|_{L^{p,1}(M_1)} &= \int_0^t\mu_1(\{|g|>s\})^{\frac 1p}\d s \leq \mu_1(M_1)^{\frac 1p}+ \chi_{[1,\infty)}(t)\int_1^t\mu_1(\{|g|>s\})^{\frac 1p}\d s
    \\ \nonumber &\leq 
    \mu_1(M_1)^{\frac 1p} + K_p(t) \bigg(\int_0^\infty \frac{\phi(s)}s \mu_1(\{|g|>s\})\d s\bigg)^{\frac{1}{p}}
   \leq 
    \mu_1(M_1)^{\frac 1p} + K_p(t) \bigg(\int_0^\infty \phi'(s)\mu_1(\{|g|>s\})\d s\bigg)^{\frac{1}{p}}
    \\ \nonumber & =  \mu_1(M_1)^{\frac 1p} + K_p(t) \bigg(\int_{M_1}\phi(|g|)\d \mu_1\bigg)^{\frac 1p}   \quad \text{for $t \geq 0$.}
\end{align}
On the other hand,
\begin{align}\label{SP5}
\|g^t\|_{L^1(M_1)} & = \int_t^\infty \mu_1(\{|g|>s\})\d s= \frac{t}{\phi(t)} \int_t^\infty \frac{\phi(t)}{t}\mu_1(\{|g|>s\})\d s
\\ & \leq \nonumber  
\frac{t}{\phi(t)} \int _0^\infty \phi'(t)\mu_1(\{|g|>s\})\d s
   \leq  \frac{t}{\phi(t)} \int_{M_1}\phi(|g|)\d \mu_1 \quad \text{for $t \geq 0$.}
\end{align}
   Now,  let $k$ be a positive constant to be chosen later. One has

   \begin{align}
       \label{SP6}
       \int_{M_2}    \phi_{0,p}\Big(\frac{|Tg|}{4k}\Big)\, d\mu_2 & = \int_1^\infty    \phi_{0,p}'(t) \mu_2(\{|Tg|>4kt\})\d t \leq 
       \int_1^\infty  \frac{\overline  \phi_{0,p}(2t)}t \mu_2(\{|Tg|>4kt\})\d t
       \\ \nonumber &
\leq  \int_1^\infty  \frac{\overline  \phi_{0,p}(t)}t \mu_2(\{|Tg|>2kt\})\d t
     \\  \nonumber &  = - \int_1^\infty \frac{ \overline \phi_{0,p}(t)}{t^{p'}} \frac{d}{dt}\bigg(\int_t^\infty \mu_2(\{|Tg|>2ks\})s^{p'-1}\d s\bigg)\d t
     \\  \nonumber &  \leq
     \int_1^\infty \frac{d}{dt}\bigg(\frac{\overline  \phi_{0,p}(t)}{t^{p'}} \bigg)\bigg(\int_t^\infty \mu_2(\{|Tg|>2ks\})s^{p'-1}\d s\bigg)\d t.
   \end{align}
  Note that the first inequality holds as $ \phi_{0,p}'(t)\le\frac{\overline \phi_{0,p}(2t)}{t}$, since the function $\frac{\overline \phi_{0,p}(t)}{t-1}$ is non-decreasing, and    the last equality  holds thanks to the fact that $\phi_{0,p}(1)=0$.
\\Set
   $$\lambda (t)= K_p^{-1}(t-1) \quad \text{for $t \geq 1$.} $$
   The sublinearity of $T$ ensures that
   $$\mu_2(\{|Tg|>2ks\}) \leq \mu_2(\{|Tg_{\lambda(t)}|>ks\})+ \mu_2(\{|Tg^{\lambda(t)}|>ks\})$$
   for   $s,t\geq 1$. Moreover, since the function $\frac{ \overline \phi_{0,p}(t)}{t^{p'}}$ is non-decreasing, we have that $\frac{d}{dt}\Big(\frac{ \overline \phi_{0,p}(t)}{t^{p'}}\Big) \geq 0$ for $t \geq 1$. Hence,
   \begin{align}
       \label{SP7}
        \int_1^\infty & \frac{d}{dt}\bigg(\frac{ \overline \phi_{0,p}(t)}{t^{p'}} \bigg)\bigg(\int_t^\infty \mu_2(\{|Tg|>2ks\})s^{p'-1}\d s\bigg)\d t 
       \\ \nonumber & \leq 
 \int_1^\infty \frac{d}{dt}\bigg(\frac{\overline  \phi_{0,p}(t)}{t^{p'}} \bigg)\bigg(\int_t^\infty \mu_2(\{|Tg_{\lambda(t)}|>ks\})s^{p'-1}\d s\bigg)\d t 
 \\ \nonumber & 
      \quad + \int_1^\infty \frac{d}{dt}\bigg(\frac{ \overline \phi_{0,p}(t)}{t^{p'}} \bigg)\bigg(\int_t^\infty \mu_2(\{|Tg^{\lambda(t)}|>ks\})s^{p'-1}\d s\bigg)\d t = I+J.
   \end{align}
   Owing to \eqref{inter2} and \eqref{SP4}, with $t$ replaced with $\lambda (t)$, 
   \begin{align}
       \label{SP8}
\|Tg_{\lambda(t)}\|_{L^\infty(M_2)}\leq N_1\|g_{\lambda(t)}\|_{L^{p,1}(M_1)}&\leq N_1\bigg(\mu_1(M_1)^{\frac 1p} + K_p(\lambda (t)) \bigg(\int_{M_1}\phi(|g|)\d \mu_1\bigg)^{\frac 1p}\bigg)
\\ \nonumber & \leq N_1\bigg(\mu_1(M_1)^{\frac 1p} + t \bigg(\int_{M_1}\phi(|g|)\d \mu_1\bigg)^{\frac 1p}\bigg) \quad \text{for $t \geq 1$.} 
   \end{align}
   The choice
   \begin{align}\label{k} k= N_1 
       \bigg(\mu_1(M_1)^{\frac 1p} + \bigg(\int_{M_1}\phi(|g|)\d \mu_1\bigg)^{\frac 1p}\bigg)
   \end{align}
   entails that 
   \begin{align}
       \label{SP9}
       ks \geq N_1\bigg(\mu_1(M_1)^{\frac 1p} + t \bigg(\int_{M_1}\phi(|g|)\d \mu_1\bigg)^{\frac 1p}\bigg) 
   \end{align}
    if $s\geq t\geq 1$. From \eqref{SP8} and \eqref{SP9} we deduce that
    $\mu_2(\{|Tg_{\lambda(t)}|>ks\})=0$
    if $s\geq t\geq 1$, whence
    \begin{align}
        \label{SP10}
        I=0.
    \end{align}
    By \eqref{inter1}, we have that
\begin{align}
    \label{SP12}
\bigg(\int_t^\infty \mu_2(\{|Tg^{\lambda(t)}|>ks\})s^{p'-1}\d s\bigg)\leq  \frac{1}{p'} \Big(\frac{N_0}{k}\Big)^{p'}\bigg(\int_{\lambda (t)}^\infty \mu_1(\{|g|>s\})\d s\bigg)^{p'}.
\end{align}
Moreover, by \eqref{SP5} with $t$ replaced with $\lambda(t),$ and the 
definitions of  the latter and of  $\overline \phi_{0,p}(t)$,
    \begin{align}
        \label{SP11}
        \frac{  \overline \phi_{0,p}(t)}{t^{p'}} \bigg(\int_{\lambda (t)}^\infty \mu_1(\{|g|>s\})\d s\bigg)^{p'} & \leq \frac{\overline  \phi_{0,p}(t)}{t^{p'}}\bigg(\frac{\lambda (t)}{\phi (\lambda (t))}\bigg)^{p'} \bigg(\int_{M_1}\phi(|g|)\d \mu_1\bigg)^{p'}
\leq \bigg(\int_{M_1}\phi(|g|)\d \mu_1\bigg)^{p'}
    \end{align}
    for $t \geq 1$. 
    The following chain holds thanks to \eqref{SP12} and \eqref{SP11}:
    \begin{align}\label{SP14}
    J & \leq \frac{1}{p'} \Big(\frac{N_0}{k}\Big)^{p'}\int_1^\infty \frac{d}{dt}\bigg(\frac{\overline  \phi_{0,p}(t)}{t^{p'}} \bigg) \bigg(\int_{\lambda (t)}^\infty \mu_1(\{|g|>s\})\d s\bigg)^{p'}  \d t 
    \\ \nonumber &
    = \frac{1}{p'} \Big(\frac{N_0}{k}\Big)^{p'}\bigg[ \frac{\overline  \phi_{0,p}(t)}{t^{p'}}
    \bigg(\int_{\lambda (t)}^\infty \mu_1(\{|g|>s\})\d s\bigg)^{p'}\bigg|_{t=1}^{t=\infty} - 
    \int_1^\infty \frac{ \overline \phi_{0,p}(t)}{t^{p'}} \frac{d}{dt}\bigg(\int_{\lambda (t)}^\infty \mu_1(\{|g|>s\})\d s\bigg)^{p'}  \d t \bigg]
    \\ \nonumber & \leq 
    \frac{1}{p'} \Big(\frac{N_0}{k}\Big)^{p'}\bigg[\bigg(\int_{M_1}\phi(|g|)\d \mu_1\bigg)^{p'} 
    + p'
    \int_1^\infty \frac{ \overline \phi_{0,p}(t)}{t^{p'}}\bigg(\int_{\lambda (t)}^\infty \mu_1(\{|g|>s\})\d s\bigg)^{p'-1} 
    \mu_1(\{|g|>\lambda (t)\})\lambda '(t)\, \d t\bigg].
    \end{align}
    Note that
\begin{align}
    \label{SP15}
    \int_1^\infty& \frac{  \overline \phi_{0,p}(t)}{t^{p'}}\bigg(\int_{\lambda (t)}^\infty \mu_1(\{|g|>s\})\d s\bigg)^{p'-1} 
    \mu_1(\{|g|>\lambda (t)\})\lambda '(t)\, \d t
    \\  \nonumber & = \int_1^\infty \frac{ \overline \phi_{0,p}(\lambda ^{-1}(\tau))}{\lambda ^{-1}(\tau)^{p'}}\bigg(\int_{\tau}^\infty \mu_1(\{|g|>s\})\d s\bigg)^{p'-1} 
    \mu_1(\{|g|>\tau\})\, \d \tau
    \\  \nonumber & \leq 
    \int_1^\infty \frac{ \overline \phi_{0,p}(\lambda ^{-1}(\tau))}{\lambda ^{-1}(\tau)^{p'}} \bigg(\frac{\tau}{\phi(\tau)}\bigg)^{p'-1}
\bigg(\int_{\tau}^\infty\frac{\phi(s)}{s} \mu_1(\{|g|>s\})\d s\bigg)^{p'-1} 
    \mu_1(\{|g|>\tau\})\, \d \tau
    \\  \nonumber & \leq\int_1^\infty \frac{\phi(\tau)}{\tau} 
\bigg(\int_{\tau}^\infty\frac{\phi(s)}{s} \mu_1(\{|g|>s\})\d s\bigg)^{p'-1} 
    \mu_1(\{|g|>\tau\})\, \d \tau
     \\  \nonumber & =\frac{1}{p'}\bigg(\int_{1}^\infty\frac{\phi(s)}{s} \mu_1(\{|g|>s\})\d s\bigg)^{p'} \leq \frac{1}{p'} \bigg(\int_{M_1}\phi(|g|)\, \d \mu_1\bigg)^{p'}.
\end{align}
Combining the inequalities \eqref{SP14} and \eqref{SP15} yields
\begin{align}
    \label{SP16}
    J \leq \frac{2}{p'} \Big(\frac{N_0}{k}\Big)^{p'}\bigg(\int_{M_1}\phi(|g|)\, \d \mu_1\bigg)^{p'},
\end{align}
    whence, via \eqref{SP6} and \eqref{SP7} we obtain that
    \begin{align}
        \label{SP17}
        \int_{M_2}   \phi_{0,p}\Big(\frac{|Tg|}{4k}\Big)\, \d\mu_2 \leq \frac{2}{p'} \Big(\frac{N_0}{k}\Big)^{p'}\bigg(\int_{M_1}\phi(|g|)\, \d \mu_1\bigg)^{p'}.
    \end{align}
    Since $k$ is given by \eqref{k}, the latter inequality implies that
    \begin{align}
        \label{SP18}
           \int_{M_2}   \phi_{0,p}\Bigg(\frac{|Tg|}{4 N_1 
       \big(\mu_1(M_1)^{\frac 1p} + \big(\int_{M_1}\phi(|g|)\d \mu_1\big)^{\frac 1p}\big)
        }\Bigg)\, \d\mu_2 \leq \frac{2}{p'} \Big(\frac{N_0}{N_1}\Big)^{p'} \int_{M_1}\phi(|g|)\, \d \mu_1.
    \end{align}
    Hence, the inequality \eqref{inter3} follows.
\end{proof}

\begin{proof}
    [Proof of Theorem \ref{SPnew}] 
Given any function $u \in W^{1,\phi}_0(\Omega)$, denote by $u^*: [0,|\Omega|]\to [0,\infty] $ its decreasing rearrangement. The P\'olya-Szeg\"o principle ensures that $u^*$ is locally absolutely continuous, and 
\begin{align}
    \label{PS}
    \int_{\Omega}\phi(|\nabla u|)\, \dx \geq \int_0^{|\Omega|}\phi\big({ n\omega^{\frac1n}}s^{\frac{1}{n'}}(-{u^*}'(s)\big)\, \d s,
\end{align}
where $\omega_n$ denotes the Lebesgue measure of the unit ball in $\rn$. Moreover,
\begin{align}
    \label{june1}
    \int_\Omega {\overline{\phi}_{0,n}}(|u|)\, \d x = \int_0^{|\Omega|}{\overline{\phi}_{0,n}}(u^*(s))\, \d s= \int_0^{|\Omega|}\overline{\phi}_{0,n}\bigg(\int_s^{|\Omega|}-{u^*}'(r)\,\d r \bigg)\, \d s.
\end{align}
Thanks to Equations \eqref{PS} and \eqref{june1}, the inequality \eqref{SPnew1} will follow if we show that
\begin{align}
    \label{june2}
\int_0^{|\Omega|}\overline{\phi}_{0,n}\Bigg(\frac{c\int_s^{|\Omega|}r^{-1/n'}g(r)\,\d r}{\big(|\Omega|+ \int_0^{|\Omega|}\phi(g)dr\big)^{1/n}} \Bigg)\, \d s \leq \int_0^{|\Omega|}\phi(g)ds
\end{align}
for every measurable function $g: (0, |\Omega|) \to [0, \infty)$. One can verify that the operator $T$ defined as 
$$Tg(s) = \int_s^{|\Omega|}r^{-1/n'}|g(r)|\,\d r $$
is sublinear and such that
$$T: L^1(0, |\Omega|) \to L^{n'}(0, |\Omega|) \quad \text{and} \quad  L^{n,1}(0, |\Omega|) \to L^{\infty}(0, |\Omega|),
$$
with norms depending only on $n$. The inequality \eqref{june2} is thus a consequence of Theorem \ref{SPnew}.
\end{proof}

\section{Proof of Theorem \ref{main}, Part (i), a priori estimates}\label{sec:proof1}

  This section is devoted to a proof of the inequality \eqref{oct109} under stronger regularity assumptions on the functions $a(\cdot, P)$ and $f$. Thanks to a result from \cite{GPS}, these assumptions ensure that expressions like the one on left-hand side of \eqref{oct109} are finite, and hence  can be absorbed from one side to the other one in inequalities which are derived from the weak formulation of the problem \eqref{equa}.

We begin with a few preliminary results.
The following lemma   from \cites{DieForTomWan19, GPS} concerns relations among a Carath\'eodory function $a(x,P)$ as in Theorem \ref{main}
and shifts of the function $\phi$.

\begin{lemalph}
  \label{lem:hammer}
  \textit{
  Let $\phi$ be a Young function satisfying \eqref{indices3} and let $a:\Omega\times\Sym\to\Sym$ be a Carath\'eodory function fulfilling the
properties~\eqref{ip1} and \eqref{ip2b}.
  Then, 
  \begin{align}\label{a-coercivity}
   \big( a(x,P)-a(x,Q)\big)\cdot (P-Q )
    &\geq c  \phi_{|{Q}|} \left( |{P-Q}| \right) 
    ,\\[0.8ex]
    \label{seconda}
    a(x,Q) \cdot Q
    &  \eqsim  \phi_{|{Q}|}(|{Q}|) 
  \end{align}
  for every~$P,Q \in \Sym$ and a.e. $x,y\in\Omega $, up to 
  constants  depending only on $i_\phi,s_\phi,\nu$, and $L$.
\\  Assume, in addition, that the function $a$ also satisfies \eqref{ip2}. Then, 
  \begin{align}
    \label{a-Lipschitz}
    |{a(x,P)-a(x,Q)}| &\le c \phi_{|{Q}|}'(|{P-Q}|),\\
     (a(x,P)-a(x,Q))\cdot (P-Q)
    &\eqsim \phi_{|{Q}|} \left( |{P-Q}| \right).\label{a-coercivity-2}
  \end{align}
  for every~$P,Q \in \Sym$ and a.e. $x,y\in\Omega $, with the same dependence of the constants as above.}
\end{lemalph}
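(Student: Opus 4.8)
The plan is to read off all four relations from the structural hypotheses \eqref{ip1}, \eqref{ip2}, \eqref{ip2b}, which are formulated through the quantity $\phi''(|P|+|Q|)$, by systematically converting such expressions into the shifted Young functions $\phi_{|Q|}$ and $\phi_{|Q|}'$ by means of the pointwise identities \eqref{july21}, \eqref{shift4} and \eqref{shift4'}. Since the hypotheses hold, with $x$-independent constants, for a.e. fixed $x$, the whole reasoning is carried out at a fixed admissible $x$, and the resulting constants depend only on $i_\phi$, $s_\phi$, $\nu$ and $L$. Throughout, I would use that $|P|+|Q|$ and $|P-Q|$ are symmetric under the interchange $P\leftrightarrow Q$, so that \eqref{shift4} and \eqref{shift4'} may be applied with $\phi_{|P|}$ replaced by $\phi_{|Q|}$; in particular $\phi''(|P|+|Q|)|P-Q|\eqsim\phi_{|Q|}'(|P-Q|)$ and $\phi''(|P|+|Q|)|P-Q|^2\eqsim\phi_{|Q|}(|P-Q|)$, the latter being also an immediate consequence of \eqref{july21} together with the index and doubling properties of $\phi$.

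To establish \eqref{a-coercivity}, I would start from \eqref{ip1}, so that the left-hand side is at least $\nu\,\phi''(|P|+|Q|)|P-Q|^2$, and then use \eqref{shift4'} to identify this quantity with $\phi_{|Q|}(|P-Q|)$ up to a multiplicative constant, the case $P=Q$ being trivial. The lower bound in \eqref{seconda} then follows by taking $P=0$ in \eqref{a-coercivity}: since $a(x,0)=0$, a consequence of \eqref{ip2b} and \eqref{july57}, the left-hand side becomes $a(x,Q)\cdot Q$ and the right-hand side $c\,\phi_{|Q|}(|Q|)$. For the matching upper bound, the Cauchy--Schwarz inequality together with \eqref{ip2b} gives $a(x,Q)\cdot Q\le|a(x,Q)|\,|Q|\le L\,\phi'(|Q|)|Q|$, and then $\phi'(|Q|)|Q|\eqsim\phi(|Q|)\eqsim\phi_{|Q|}(|Q|)$ by the Simonenko-index bound \eqref{indices1} and by \eqref{seconda-split}.

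For \eqref{a-Lipschitz} I would simply insert \eqref{ip2} to get $|a(x,P)-a(x,Q)|\le L\,\phi''(|P|+|Q|)|P-Q|$ and then invoke \eqref{shift4}, with the shift taken at $|Q|$ as explained above, to bound the right-hand side by $c\,\phi_{|Q|}'(|P-Q|)$. Finally, \eqref{a-coercivity-2} combines the two previous ideas: the lower estimate is precisely \eqref{a-coercivity}, whereas for the upper estimate the Cauchy--Schwarz inequality and \eqref{ip2} yield $(a(x,P)-a(x,Q))\cdot(P-Q)\le L\,\phi''(|P|+|Q|)|P-Q|^2$, which is $\eqsim\phi_{|Q|}(|P-Q|)$ by \eqref{shift4'} once more.

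I do not expect a genuine obstacle here: the argument is a careful chaining of the already-available shift identities with Cauchy--Schwarz, and the only points demanding some attention are the uniform tracking of equivalence constants and the use of the symmetry $P\leftrightarrow Q$ to transfer the shift between $|P|$ and $|Q|$ in \eqref{shift4}--\eqref{shift4'}. The mildly technical step is the equivalence chain $\phi'(|Q|)\,|Q|\eqsim\phi(|Q|)\eqsim\phi_{|Q|}(|Q|)$ entering the upper bound in \eqref{seconda}, which relies on \eqref{indices1} and \eqref{seconda-split} rather than on the defining structure conditions on $a$.
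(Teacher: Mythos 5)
Your proof is correct and gives the expected argument: each of \eqref{a-coercivity}--\eqref{a-coercivity-2} follows by feeding the structural bounds \eqref{ip1}--\eqref{ip2b} into the shift identities \eqref{shift4}--\eqref{shift4'} (using the symmetry of $|P|+|Q|$ and $|P-Q|$ under $P\leftrightarrow Q$ to replace the shift $|P|$ by $|Q|$), with Cauchy--Schwarz together with \eqref{indices1} and \eqref{seconda-split} giving the upper bound in \eqref{seconda}, and $a(x,0)=0$ (from \eqref{ip2b} and \eqref{july57}) yielding its lower bound by taking $P=0$ in \eqref{a-coercivity}. Note that the paper itself does not prove Lemma~\ref{lem:hammer} but cites it from the references \cite{DieForTomWan19} and \cite{GPS}, so there is no in-paper argument to compare against; your derivation is the standard one those sources supply.
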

We shall also make use of a classical iteration lemma -- see e.g. \cite[Lemma 6.1]{Giusti}.
\begin{lemalph}\label{lem:Giaq}
\textit{    Let $0<R_1<R_2$ and let   
    $f:[R_1,R_2]\to[0,\infty)$ be a bounded function. Assume that 
\begin{equation*}
      f(r_1)\le\vartheta f(r_2)+\sum_{i=1}^N\frac {A_i}{(r_2-r_1)^{\alpha_i}} + B
       \qquad\mbox{for all }R_1<r_1<r_2<R_2,
    \end{equation*}
    for some nonnegative constants $B$, and $A_i$ and $\alpha_i$, with $i=1, \dots N$,
    and for $\vartheta\in(0,1)$. Then, 
    \begin{equation*}
      f(R_1)\le c
      \bigg(\sum_{i=1}^N\frac {A_i}{(R_2-R_1)^{\alpha_i}} + B
      \bigg),
    \end{equation*}
    for some constant $c$ depending on $\alpha_i$, $i=1, \dots N$,
 and $\vartheta$.   }
  \end{lemalph}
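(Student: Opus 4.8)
The plan is to prove this classical iteration ("hole-filling") lemma by iterating the assumed inequality along a geometric sequence of radii. \textbf{Step 1 (choice of radii and one-step estimate).} Fix a parameter $\tau\in(0,1)$ to be specified at the end, set $\rho_0=R_1$ and $\rho_j=R_1+(1-\tau^{j})(R_2-R_1)$ for $j\geq1$, so that $\rho_j\uparrow R_2$, each $\rho_j\in[R_1,R_2)$, and $\rho_{j+1}-\rho_j=(1-\tau)\tau^{j}(R_2-R_1)$. Applying the hypothesis with $r_1=\rho_j$, $r_2=\rho_{j+1}$, and using $0<1-\tau<1$ together with $\alpha_i\le\alpha_\ast:=\max_i\alpha_i$ to write $(1-\tau)^{-\alpha_i}\le(1-\tau)^{-\alpha_\ast}$, one obtains
\[
 f(\rho_j)\le\vartheta\,f(\rho_{j+1})+\frac{1}{(1-\tau)^{\alpha_\ast}}\sum_{i=1}^N\frac{A_i\,\tau^{-j\alpha_i}}{(R_2-R_1)^{\alpha_i}}+B\qquad\text{for every }j\geq0.
\]

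\textbf{Step 2 (iteration, choice of $\tau$, passage to the limit).} Iterating the last display from $j=0$ to $j=k-1$ gives, by an immediate induction and since $\rho_0=R_1$,
\[
 f(R_1)\le\vartheta^{k}f(\rho_k)+\frac{1}{(1-\tau)^{\alpha_\ast}}\sum_{i=1}^N\frac{A_i}{(R_2-R_1)^{\alpha_i}}\sum_{j=0}^{k-1}\big(\vartheta\,\tau^{-\alpha_i}\big)^{j}+B\sum_{j=0}^{k-1}\vartheta^{j}.
\]
Because $\vartheta\in(0,1)$ and there are only finitely many exponents, one can choose $\tau\in(0,1)$ close enough to $1$ that $\vartheta\,\tau^{-\alpha_i}<1$ for every $i$ (it suffices that $\tau>\max_{i:\,\alpha_i>0}\vartheta^{1/\alpha_i}$; indices with $\alpha_i=0$ give ratio $\vartheta<1$ automatically). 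For such a $\tau$ both geometric sums in $j$ are bounded uniformly in $k$, by $(1-\vartheta\,\tau^{-\alpha_i})^{-1}$ and $(1-\vartheta)^{-1}$ respectively, while $\vartheta^{k}f(\rho_k)\to0$ as $k\to\infty$ since $f$ is bounded on $[R_1,R_2]$. Letting $k\to\infty$ yields
\[
 f(R_1)\le\frac{1}{(1-\tau)^{\alpha_\ast}}\sum_{i=1}^N\frac{A_i}{(1-\vartheta\,\tau^{-\alpha_i})(R_2-R_1)^{\alpha_i}}+\frac{B}{1-\vartheta},
\]
which is exactly the asserted bound, with a constant $c$ depending only on $\vartheta$ and on $\alpha_1,\dots,\alpha_N$.

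\textbf{On the difficulty.} There is no substantial obstacle here; the argument is entirely elementary. The only two points that deserve a line of care are the simultaneous choice of $\tau$ beating all the $\alpha_i$ at once, which works precisely because there are finitely many of them and $\vartheta<1$, and the use of the hypothesis at the left endpoint in the step $j=0$ (where $r_1=\rho_0=R_1$): this is implicitly part of the statement, since the conclusion concerns $f(R_1)$, so the inequality must be read as holding also for $r_1=R_1$.
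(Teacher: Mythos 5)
Your proof is correct and is essentially the standard argument from Giusti's book (Lemma 6.1), which is exactly the reference the paper cites in lieu of a proof. The geometric sequence of radii $\rho_j=R_1+(1-\tau^j)(R_2-R_1)$, the one-step iteration, the choice of $\tau$ so that $\vartheta\tau^{-\alpha_i}<1$ simultaneously for all $i$ (possible because there are finitely many exponents and $\vartheta<1$), and the use of boundedness of $f$ to kill $\vartheta^k f(\rho_k)$ in the limit — all of this matches the classical proof. Your remark about the endpoint $r_1=R_1$ is a fair observation: the paper's displayed hypothesis has strict inequalities, but the conclusion (and the paper's own use of the lemma, e.g.\ in the derivation of \eqref{est-tau-h-V-1}) requires the inequality at $r_1=R_1$ as well, so the strictness is a minor imprecision in the statement rather than a gap in your argument.
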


The assumption \eqref{ipholder} amounts to the membership of the function $a(\cdot,P)$ in a fractional  space of order $\alpha$, with a norm depending on $P$ just through $\phi'(|P|)$. The next result is a kind of Sobolev embedding and tells us that  $a(\cdot,P)$ belongs to a  space of lower order, with a higher exponent of integrability. A key feature of this property in view of our applications is that the dependence on $P$ of the 
norm in the new space is unchanged.

\begin{lem}\label{lem:ipholder-beta}
 Let $a:\Omega\times\Sym\to\Sym$ be a Carath\'eodory function satisfying 
the assumptions ~\eqref{ip2b} and \eqref{ipholder}  for some
Young function 
 $\phi$ fulfilling \eqref{indices3} and
some
  $k\in L^{ m}(\Omega)$ with $
  m>\frac{n}{\alpha}$. Given $\beta\in(0,\alpha)$, set
  \begin{align}
      \label{oct5}
      m_\beta = \begin{cases}
          \frac{nm}{n-(\alpha -\beta)m} &\quad \text{if \,\,$ \alpha -\frac nm < \beta < \alpha$}
          \\ \text{any number $>\frac n \beta$} &\quad \text{if\,\, $0< \beta  \leq \alpha -\frac nm$.}
      \end{cases}
  \end{align}
Then, for
  every   bounded open set $\Omega'\Subset\Omega$, there exists a function
  $k_\beta\in L^{ m_\beta}(\Omega')$ 
  such that
  \begin{equation}
    \label{ipholder-beta}
    |a(x,P)-a(y,P)|
    \le
    (k_\beta(x)+k_\beta(y))|x-y|^\beta \phi'(|P|)
\end{equation}
 for a.e. $x,y\in\Omega'$ and every $P\in\Sym$. Moreover,
\begin{equation}\label{est:lem-ipholder-beta}
  \|k_\beta\|_{L^{ m_\beta}(\Omega')}\le c(\|k\|_{L^{
  {m}}(\Omega)}+1),
  \end{equation}
for some  constant depending on $n,
m,\alpha,\beta,L,
\mathrm{diam}(\Omega')
$,  and  $\mathrm{dist}(\Omega',\partial\Omega)$. 
\end{lem}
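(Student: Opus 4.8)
The plan is to recognize Lemma~\ref{lem:ipholder-beta} as a Sobolev‑type embedding for Haj\l asz fractional spaces on a set of finite diameter, carried out uniformly in the parameter~$P$. \emph{Reduction.} For $P=0$ we have $a(x,0)=0$ by \eqref{ip2b} and \eqref{july57}, so \eqref{ipholder-beta} is trivial; for $P\neq0$ we have $\phi'(|P|)>0$ by \eqref{indices1}, so we may set $\bar a_P(x)=a(x,P)/\phi'(|P|)$. Then $|\bar a_P|\le L$ on $\Omega$ by \eqref{ip2b}, and $|\bar a_P(x)-\bar a_P(y)|\le(k(x)+k(y))|x-y|^\alpha$ for a.e. $x,y\in\Omega$ by \eqref{ipholder}, with constants independent of~$P$. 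It thus suffices to construct, out of $k$ and the structural constants only (hence independently of~$P$), a function $k_\beta\in L^{m_\beta}(\Omega')$ with $|\bar a_P(x)-\bar a_P(y)|\le(k_\beta(x)+k_\beta(y))|x-y|^\beta$ for a.e. $x,y\in\Omega'$; since $k_\beta$ does not depend on~$P$, the resulting estimate upgrades to one valid for \emph{all} $P$ and a.e. $(x,y)$ by first restricting $P$ to a countable dense subset of $\Sym$ and then invoking the continuity of $a(x,\cdot)$ and of $\phi'$.

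\emph{The pointwise estimate.} Fix $\gamma=\alpha-\beta\in(0,\alpha)$, choose $d_0\in(0,\mathrm{dist}(\Omega',\partial\Omega))$ and an open set $\Omega''$ with $\Omega'\Subset\Omega''\Subset\Omega$ containing every ball $B(x,d_0)$ with $x\in\Omega'$, and set $M_\gamma k(x)=\sup_{0<r<d_0}r^\gamma\mint_{B(x,r)}k\,\d y$ for $x\in\Omega'$. The core of the argument is the bound
\begin{equation*}
\big|\bar a_P(x)-(\bar a_P)_{B(x,\rho)}\big|\le c\,\rho^\beta M_\gamma k(x)\qquad\text{for a.e. }x\in\Omega'\text{ and every }\rho\in(0,d_0),
\end{equation*}
where $(\bar a_P)_B$ is the average of $\bar a_P$ over $B$. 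It follows by telescoping along the dyadic balls $B_j=B(x,\rho\,2^{-j})$: the Haj\l asz inequality for $\bar a_P$ together with $|B_{j+1}|\eqsim|B_j|$ gives $|(\bar a_P)_{B_{j+1}}-(\bar a_P)_{B_j}|\le c\,(\rho2^{-j})^\alpha\mint_{B_j}k$, and inserting $\mint_{B_j}k\le(\rho2^{-j})^{-\gamma}M_\gamma k(x)$ bounds the $j$‑th term by $c\,\rho^\beta 2^{-j\beta}M_\gamma k(x)$; summing over $j\ge0$ and letting $B_j\downarrow\{x\}$ at a Lebesgue point of $\bar a_P$ yields the claim. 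It is important that no term involving $k(x)$ itself survives this computation.

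\emph{Conclusion.} For $x,y\in\Omega'$ with $d=|x-y|$: if $4d<d_0$, applying the pointwise estimate at $x$ and at $y$ with radius $4d$, and comparing $(\bar a_P)_{B(x,4d)}$ and $(\bar a_P)_{B(y,4d)}$ via the Haj\l asz inequality over these overlapping, comparable balls, gives $|\bar a_P(x)-\bar a_P(y)|\le c\,d^\beta\big(M_\gamma k(x)+M_\gamma k(y)\big)$; if $4d\ge d_0$, then simply $|\bar a_P(x)-\bar a_P(y)|\le 2L\le 2L(4/d_0)^\beta d^\beta$. Hence $k_\beta:=c\,\big(M_\gamma k+L(4/d_0)^\beta\big)$ works, and it remains to estimate $\|k_\beta\|_{L^{m_\beta}(\Omega')}$. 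If $(\alpha-\beta)m<n$, i.e. $\alpha-\tfrac nm<\beta<\alpha$, then $M_\gamma k\le c\,I_{\alpha-\beta}(k\chi_{\Omega''})$ and the Hardy--Littlewood--Sobolev theorem gives $\|M_\gamma k\|_{L^{m_\beta}(\Omega')}\le c\|k\|_{L^m(\Omega'')}$ with $\tfrac1{m_\beta}=\tfrac1m-\tfrac{\alpha-\beta}n$, which is the value in \eqref{oct5}; if $(\alpha-\beta)m\ge n$, a plain H\"older estimate gives $M_\gamma k(x)\le c\,d_0^{\alpha-\beta-n/m}\|k\|_{L^m(\Omega'')}$, so $k_\beta$ is bounded and belongs to $L^{m_\beta}(\Omega')$ for every finite $m_\beta$, in particular any $m_\beta>n/\beta$. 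In all cases \eqref{est:lem-ipholder-beta} follows, with the constant depending only on the quantities listed there ($L$ being structural).

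\emph{Main difficulty.} The delicate point is to arrange the chain argument so that it yields the clean bound $c\,d^\beta M_\gamma k$ with no leftover $k(x)$ term — this is precisely what lets the target exponent $m_\beta$, which may exceed $m$ for small $\beta$, be reached — and to handle the pairs $x,y$ for which the dyadic chain would leave $\Omega$; both are dealt with by exploiting the a priori bound $|a(\cdot,P)|\le L\phi'(|P|)$ from \eqref{ip2b} and by keeping the truncated fractional maximal operator in place of the global Riesz potential, so that only the values of $k$ near $\Omega'$ enter the estimate.
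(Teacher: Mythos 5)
Your proof is correct, and it reaches the same conclusion as the paper by a genuinely different route. The paper reduces to the quantity $F_P=\zeta\, a(\cdot,P)/\phi'(|P|)$, with $\zeta$ a cutoff equal to $1$ on $\Omega'$, so that $F_P$ satisfies a global Haj\l asz bound on all of $\R^n$; it then invokes the DeVore--Sharpley machinery for sharp fractional maximal functions -- specifically their inequalities (9.12) (lowering the order of $(F_P)^\sharp_\cdot$ via a Riesz potential) and (2.16) (recovering a pointwise Haj\l asz inequality of order $\beta$ from $(F_P)^\sharp_\beta$) -- and sets $k_\beta=I_{\alpha-\beta}[M\overline k]$, bounding its $L^{m_\beta}$ norm by Hardy--Littlewood--Sobolev and the maximal theorem. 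You instead work with $\bar a_P=a(\cdot,P)/\phi'(|P|)$ directly on an intermediate set $\Omega''\Supset\Omega'$, prove the oscillation estimate $\lvert\bar a_P(x)-(\bar a_P)_{B(x,\rho)}\rvert\le c\,\rho^\beta M_\gamma k(x)$ by an explicit dyadic telescoping along shrinking balls, handle the far-apart pairs through the uniform bound $\lvert\bar a_P\rvert\le L$ from \eqref{ip2b}, and only then invoke HLS (via $M_\gamma k\le c\,I_{\alpha-\beta}(k\chi_{\Omega''})$) for the norm estimate. The net effect is the same ($k_\beta$ is essentially a Riesz potential of order $\alpha-\beta$ of $k$, which accounts for the gain from $m$ to $m_\beta$), but your version is self-contained and avoids the DeVore--Sharpley references, at the cost of carrying out the chain argument by hand; the paper's version is shorter because those two inequalities are cited as black boxes. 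A further small difference: you explicitly address the $P$-dependent null set by restricting to a countable dense set of $P$'s and using continuity of $a(x,\cdot)$ and $\phi'$ -- a point the paper leaves implicit -- and your treatment of the borderline case $\beta\le\alpha-n/m$ (the truncated fractional maximal function is simply bounded, by H\"older) is arguably cleaner than the paper's reduction to a larger $\overline\beta$.
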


\begin{proof}
  First, assume that $\beta>\alpha -\frac{n}{
  m} $, and hence ${m}_\beta =\frac{ m
  n}{n-(\alpha-\beta) m}$.
    Given any open set 
  $\Omega'$ as in the statement,  choose a cut-off function
  $\zeta\in \mathrm{Lip}(\Omega)$ such that $0\leq \zeta \leq 1$ in $\Omega$,    $\zeta= 1$ in
  $\Omega'$, $\spt\zeta \subset\Omega$ and $\|\nabla\zeta\|_{L^\infty}\le \mathrm{dist}(\Omega',\partial\Omega)^{-1}$. Fix $P\in\Sym$ and define the function
  $F_P : \Rn \to [0, \infty)$ as
  \begin{equation*}
    F_P(x) = \zeta(x)\frac{a(x,P)}{\phi'(|P|)}
    \qquad\mbox{for }x\in\R^n.
  \end{equation*}
  We begin by observing that there exists a function  $\overline k\in L^{ m}(\R^n)$
such that
\begin{equation}\label{global-ipholder}
    |F_P(x)-F_P(y)|
    \le
    \big(\overline k(x)+\overline k(y)\big)|x-y|^\alpha \qquad \textrm{for   $x,y\in\R^n$.}
  \end{equation}
 Set $U= \textrm{spt} \,\zeta$. The left-hand side of \eqref{global-ipholder} vanishes if
  $x,y\in\R^n\setminus U$. Hence, it trivially holds whatever $\overline k$ is. 
  Assume now that
  $y\in U$, the case when $x\in U$ being analogous. One has:
  \begin{align*}
    |F_P(x)-F_P(y)|
    &\le
    \zeta(x)\frac{|a(x,P)-a(y,P)|}{\phi'(|P|)}
    +
    |\zeta(x)-\zeta(y)|\frac{|a(y,P)|}{\phi'(|P|)}.
  \end{align*}
  If $x\in\R^n\setminus U$, the first addend on the right-hand side
  vanishes, whereas, if $x\in U$, it can be estimated by
  \eqref{ipholder}. Moreover, the second addend can be bounded via \eqref{ip2b}, the mean value theorem and the trivial inequality
$(\zeta(x)+\zeta(y))^{1-\alpha}\le \chi_U(x)+\chi_U(y)$ for $x, y \in \mathbb R^n$.
  Altogether, we obtain:
  \begin{align*}
    |F_P(x)-F_P(y)|
    &\le
      \Big(k(x)\chi_U(x)+k(y)\chi_U(y)+L\|\nabla\zeta\|_{L^\infty}^{\alpha}(\chi_U(x)+\chi_U(y))\Big)|x-y|^\alpha.
  \end{align*}
  Therefore, \eqref{global-ipholder} holds with $\overline
  k=(k+L\|\nabla\zeta\|_{L^\infty}^{\alpha})\chi_U$. 
\\ Next,
  recall that the  sharp fractional maximal function of $F_P$ is defined as
  \begin{equation*}
    (F_P)_\alpha^\sharp(x)=\sup_{\mathcal  Q\ni x}
    \frac{1}{|\mathcal Q|^{1+\alpha/n}}\int_\mathcal  Q |F_P(y)-(F_P)_Q|\,\d y
    \qquad\mbox{for }x\in\R^n,
  \end{equation*}
  where  $\mathcal Q$ denotes a cube in $\Rn$ and $(F_P)_\mathcal Q$ the average of $F_P$ over $\mathcal Q$. The property \eqref{global-ipholder} implies that
\begin{equation}\label{oct7}
  (F_P)_\alpha^\sharp(x)
  \le
  c\sup_{\mathcal Q\ni x}\mint_\mathcal Q\mint_\mathcal Q\big(\overline k(y)+\overline k(z)\big)\d y\dz
  \le
  cM\overline k(x) \qquad\mbox{for }x\in\R^n,
\end{equation}
where $M\overline k$ denotes  the maximal function  of $\overline k$. In particular,  note that the right-hand
side is independent of $P$. From
\cite[Inequality (9.12)]{DevoreSharpley:1984}  and
the inequality \eqref{oct7} one obtains that
\begin{align}\label{sharp-max-bound}
  (F_P)_\beta^\sharp(x)
  \le
    cI_{\alpha-\beta}\big[(F_P)_\alpha^\sharp\big](x)
  \le
    c'I_{\alpha-\beta}\big[M\overline k\big](x) \qquad\mbox{for }x\in\R^n,
\end{align}
for suitable constants $c$ and $c'$   depending on $n$, $\alpha$ and $\beta$. Here, $I_\gamma$ denotes the Riesz potential operator in $\Rn$ of order $\gamma$.
Set 
$$k_\beta = I_{\alpha-\beta}\big[M\overline k\big].$$
Standard 
 properties of  Riesz potentials and of the maximal function ensure that
\begin{align}\label{oct10}
  \|k_\beta\|_{L^{ {m}_\beta}({ \Rn})}
  \le
  c\|M\overline k\|_{L^{ m}(\R^n)}
  \le
  c'\|\overline k\|_{L^{ m}(\R^n)}
  \le c''(\|k\|_{L^{ m}({\Omega})}+1),
\end{align}
for some constants $c, c', c''$ depending at most on $n, m,\alpha,\beta,L,\mathrm{dist}(\Omega',\partial\Omega)$ and $|U|$. 
An estimate in terms of the  sharp fractional maximal function (see
e.g. \cite[Inequality (2.16)]{DevoreSharpley:1984}) and \eqref{sharp-max-bound} yield
\begin{align}\label{oct9}
  |F_P(x)-F_P(y)|
 \le
  c\big[(F_P)_\beta^\sharp(x)+(F_P)_\beta^\sharp(y)\big]|x-y|^\beta \le
  c'[k_\beta(x)+k_\beta(y)]|x-y|^\beta  
\end{align}
for a.e. $x,y\in\R^n$, every $P\in\Sym$, and {constants $c$ and $c'$ depending on $n, \alpha, \beta$}.
By the very definition of $F_P$, the inequalities \eqref{ipholder-beta} and \eqref{est:lem-ipholder-beta}
can be derived from \eqref{oct9} and \eqref{oct10}. 
\\ Consider now the complementary case when 
$0<\beta\le \alpha -\frac{n}{ m}$. One can
choose an arbitrary $ m_\beta>m$ and apply the
preceding result with $\beta$ replaced with $\overline\beta=\alpha-\frac{n}{
  m}+\frac{n}{ m_\beta}$. This application is legitimate, as 
  $\overline\beta> \alpha-\frac{n}{
  m}\ge\beta$. As a consequence, we obtain a function $k_\beta\in L^{
  m_\beta}(\Omega')$ such that
\begin{align*}
  |a(x,P)-a(y,P)|
  \le
  c [k_\beta(x)+k_\beta(y)]|x-y|^{\overline \beta}\phi'(|P|)\le c\,\mathrm{diam}(\Omega')^{\overline \beta -\beta}[k_\beta(x)+k_\beta(y)]|x-y|^{\beta}\phi'(|P|)
\end{align*}
for a.e. $x,y\in\Omega'$ and any $P\in\Sym$. The conclusions follow also in this case.
\end{proof}

\begin{proof}
    [Proof of Theorem \ref{main}, Part (i):  a priori estimate for $V(\E u)$ in $B^{\sigma, 2, \infty}_{\rm loc}(\Omega, \mathbb R^{n\times n})$] The purpose of this step is establishing the inequality \eqref{oct109} under the additional assumptions that 
    \begin{align}
        \label{Kinf}
        k\in L^\infty_{\rm loc}(\Omega),
    \end{align}
    and
        \begin{align}
        \label{finf}
        f\in L^\infty_{\rm loc}(\Omega).
    \end{align}
    Unless otherwise specified, throughout this proof $c$, $c'$, $c''$ will denote positive constants which  may change from one Equation to another one, and may depend on $n, \nu, L, \phi, \psi$.
  The dependence on additional parameters will be explicitly indicated via an index in the relevant constant.
 \\
    Since $(u,\pi)$ is a local weak solution to the system \eqref{equa}, in particular, the function $u\in
  W^{1,\phi}(\Omega,\R^n)$ satisfies the equation:
  \begin{equation}
    \label{equa-div-free}
    \int_\Omega  a(x,\E u)\cdot\E\varphi\,\d x
    =
    \int_\Omega f\cdot\varphi\,\d x
  \end{equation}
for every $\varphi\in W^{1,\phi}(\Omega,\R^n)$ with compact support in $\Omega$ and such that $\dive\varphi=0$.
\\ Thanks to the current assumptions \eqref{Kinf} and \eqref{finf}, by \cite[Theorem 1.2]{GPS}, we have that $ V(\mathcal E u)\in B^{\upsilon, 2, \infty}_{\rm loc}(\Omega, \mathbb R^{n\times n})$,
where $\upsilon = \min\{\alpha, s_\phi'/2\}$. Notice that 
\begin{equation}   \label{july24}
\upsilon \geq \sigma.
\end{equation}
Hence, we already know that
\begin{align}\label{july25}
    V(\mathcal E u)\in B^{\sigma, 2, \infty}_{\rm loc}(\Omega, \mathbb R^{n\times n}).
\end{align}
Fix a ball 
$B_{ \frac32R}\Subset\Omega$, and let $0< R_1<R_2\le R$.  Set  $h_0=\min\{{\frac14}(R_2-R_1),1\}$.
  Fix $r_1,r_2$ such that
  $R_1\le r_1<r_2
  \le
  \frac12(R_1+R_2)
  {\le}
  R_2-{2}h_0$ and choose a cut-off function $\zeta\in
  C^\infty (\mathbb R^n)$, with compact support in $B_{r_2}$, such that $0\leq \zeta \leq 1$, $\zeta=1$ on $B_{r_1}$ and
  \begin{equation}\label{properties-zeta}
    |\nabla ^2\zeta|+|\nabla\zeta|^2\le\frac{c}{(r_2-r_1)^2}.
  \end{equation}
Fix $h\neq0$ such that $|h|\le h_0$. We 
  construct a test function  in   Equation \eqref{equa-div-free} as follows.
  Consider the map
\begin{equation}\label{definition-g}
g=\dive(\zeta^2\tau_hu).
  \end{equation}
  Notice that
\begin{align}
    \label{july12}
g=2\zeta\nabla\zeta\cdot\tau_hu\in W^{1,\phi}_0(B_{r_2}, \rn),
\end{align}
  where the equality holds since $\dive u=0$. As $\zeta$ has a compact support in $B_{r_2}$,
  \begin{equation*}
g_{_{B_{r_2}}}=\mint_{B_{r_2}}\dive(\zeta^2\tau_hu)\,\dx=0.
  \end{equation*}
  Define
  \begin{equation*}
    w=\mathcal{B}g,
  \end{equation*}
  where $\mathcal{B}$ denotes the Bogovski\u\i\ operator on $B_{r_2}$
  and observe that
\begin{align}
    \label{july14}
    w\in W^{1,\phi}_0(B_{r_2},\R^n),
\end{align}
  and 
\begin{equation}\label{choice-w-1}
     \dive w=g\mbox{\qquad in }B_{r_2},
  \end{equation}
   thanks to \eqref{july15} and \eqref{july16}.
 \\ Set 
 \begin{align}\label{gamma}
     \gamma=\min\Big\{1,\frac1{s_\phi-1}\Big\}.
 \end{align} 
 The use  of the
estimate~\eqref{Bogovski-gradient} for the function
  $\frac{R^{1-\gamma}}{|h|}g$ and the Equations \eqref{july12} and \eqref{properties-zeta}  imply that  
\begin{equation}\label{choice-w-2}
      \displaystyle\int_{B_{r_2}}\phi\left(\frac{R^{1-\gamma}|\nabla w|}{|h|}\right)\,\dx
        \le c\int_{B_{r_2}}\phi\left(\frac{R^{1-\gamma}|g|}{|h|}\right)\,\dx
      \le c'\int_{B_{r_2}}\phi\left(\frac{R^{1-\gamma}}{r_2-r_1}\frac{|\tau_hu|}{|h|}\right)\,\dx.
  \end{equation}
 Applying ~\eqref{Bogovski-diff-quotients-2} to the
  function $\eta g$, where  
  \begin{equation}
    \eta=\frac{r_2-r_1}{|h|^\gamma},\label{choice-lambda}
  \end{equation}
yields the bound 

  \begin{align}\label{choice-w-3}
      \int_{B_{r_2+|h|}}\phi\left(\frac{\eta|\tau_{-h}\nabla w|}{|h|}\right)\,\dx
        \leq c\int_{B_{r_2}}\left[\phi\left(\eta|\nabla g|\right)+\phi\left(\frac{\eta|g|}{{r_2}}\right)\right]\,\dx.
  \end{align}
Now, set 
\begin{equation}\label{def-phi}
    \varphi=\tau_{-h}(\zeta^2\tau_hu)-\tau_{-h}w.
  \end{equation}
As $\spt\zeta\subset B_{r_2}$ and $r_2+|h|\le
  r_2+h_0\le R_2$, from the very definition of $g$ and Equation \eqref{choice-w-1} we infer that
  \begin{equation}\label{def-phi'}
    \varphi \in W^{1,\phi}_0(B_{R_2},\R^n) \quad \text{and } \quad \dive\varphi=0.
  \end{equation}
Hence,  $\varphi$ is an admissible 
  test function in \eqref{equa-div-free}. Therefore, the following identity holds:
  \begin{equation}\label{test-1}
    \int_{B_R} a(x,\E u) \cdot \E(\tau_{-h}(\zeta^2\tau_hu))\,\dx
    =
    \int_{B_R}  a(x,\E u)\cdot\E
    (\tau_{-h}w)\,\dx
    +
    \int_{B_R}f\cdot\varphi\,\dx.
  \end{equation}
 By setting, for brevity,
  \begin{equation}\label{def-G}
    G=\zeta\nabla\zeta\otimes \tau_hu,
  \end{equation}
  one has that
  \begin{align}\label{test-lhs}
    \int_{B_R}  a(x,\E u)\cdot \E(\tau_{-h}(\zeta^2\tau_hu)) \,\dx & =
      \int_{B_R}  a(x,\E u)\cdot\tau_{-h}(\zeta^2\tau_h\E u)\,\dx
      +
      \int_{B_R}  a(x,\E u)\cdot\tau_{-h}(G+G^T) \,\dx\\\nonumber
    & =
      \int_{B_R}\zeta^2  \tau_ha(x,\E u)\cdot \tau_h\E u \,\dx
      +
      2\int_{B_R} a(x,\E u)\cdot\tau_{-h}G \,\dx,
  \end{align}
  where the last equality holds thanks to \eqref{july19} and the fact that  $a(x, \mathcal E u)\in \Sym$ and hence it is orthogonal to any antisymmetric matrix. 
   Notice that
  \begin{align}
    \label{def-A-B}
    \tau_ha(x,\E u)
    & =\big( a(x+he_i,\E u(x+h e_i))-a(x+h e_i,\E u(x))\big) +\big( a(x+he_i,\E u(x))-a(x,\E u(x))\big)\\\nonumber
    &=\mathcal{A}_h+\mathcal{B}_h.
  \end{align}
 From \eqref{test-1}, ~\eqref{test-lhs}, \eqref{def-A-B}, and the definition of $\varphi$ we deduce that
  \begin{align}\label{def-I-1-5}
    \int_{B_R}\zeta^2\ \mathcal{A}_h \cdot \tau_h\E
    u \,\dx
    &=
    -\int_{B_R}\zeta^2 \mathcal{B}_h \cdot \tau_h\E
      u \,\dx
    -2\int_{B_R}  a(x,\E u) \cdot \tau_{-h}G \,\dx +\int_{B_R}  a(x,\E u) \cdot 
    \tau_{-h}\E w \,\dx\\\nonumber
    &\qquad+ \int_{B_R}f\cdot\tau_{-h}(\zeta^2\tau_hu)\,\dx
          -\int_{B_R} f\cdot \tau_{-h}w\,\dx =
    \mathrm{I}_1+\mathrm{I}_2+\mathrm{I}_3+\mathrm{I}_4+\mathrm{I}_5.
  \end{align}
  Owing  to the inequalities~\eqref{a-coercivity} and \eqref{a-coercivity-split}, 
  \begin{equation}
    \label{lower-bound}
    \int_{B_R}\zeta^2 \mathcal{A}_h \cdot \tau_h\E
    u \,\dx
    \ge
    c\int_{B_{r_1}}|\tau_h V(\E u)|^2\,\dx.
  \end{equation}
  We now provide upper bounds for the  terms $\mathrm{I}_i$, $i=1, \dots , 5$. 
\\ Let us begin with 
 $\mathrm{I}_1$.   Recall that  $0<\sigma \leq \alpha$ and let 
  $m_\sigma$ be an exponent obeying \eqref{oct5}, with $\beta$ replaced with $\sigma$. Observe that we may choose $m_\sigma$ in such a way that $m_\sigma > \frac 
  {n}\sigma$.
 Thanks to Lemma \ref{lem:ipholder-beta},  given $l>1$ such that $B_{l R} \Subset \Omega$, there exists  a function $k_\sigma \in L^{m_\sigma}(B_R)$  such that 
  \begin{equation}
    \label{oct2}
    |a(x,P)-a(y,P)|
    \le
    (k_\sigma(x)+k_\sigma(y))|x-y|^\sigma \phi'(|P|)
\end{equation}
 for a.e. $x,y\in B_R$ and every $P\in\Sym$. Moreover,
 \begin{equation}\label{oct3}
  \|k_\sigma\|_{L^{m_\sigma}(B_R)}\le c(\|k\|_{L^{
  {m}}(B_{l R})}+1),
  \end{equation}
for some constant $c$ depending on $n,
m,\alpha,\sigma,L, R$ and $l$.
 Set
  \begin{align*}
    \E(h)=\max\{|\E u(x)|,|\E u(x+he_i)|\}.
  \end{align*}
 Fix $\kappa\in(0,1]$ to be chosen later.
 By \eqref{oct2},  
 the monotonicity of 
  $\phi'$,  Young's inequality, \eqref{july21},  and~\eqref{a-coercivity-split}, the following chain holds:
  \begin{align}\label{pre-I-1-bound}
    |\mathrm{I}_1|
    &\le
      \int_{B_{R}}\zeta^2|\mathcal{B}_h|\,|\tau_h\E u|\,\dx\le
    |h|^{\sigma}\int_{B_{R}}\zeta^2(k_\sigma(x)+k_\sigma(x+he_i))\phi'(|\E u|)|\tau_h\E u|\,\dx\\\nonumber
    &\le
      |h|^{\sigma}\int_{B_{R}}\zeta^2(k_\sigma(x)+k_\sigma(x+he_i))\frac{\phi'(\E(h))}{\E(h)}\,\E(h)\,|\tau_h\E
      u|\,\dx\\\nonumber
    &\le
      \kappa \int_{B_{r_2}}\zeta^2\frac{\phi'(\E(h))}{\E(h)}\,|\tau_h\E
      u|^2\,\dx+
      \frac 1\kappa|h|^{2\sigma}\int_{B_{r_2}}\zeta^2(k_\sigma(x)+k_\sigma(x+he_i))^2\phi'(\E(h))\,\E(h)\,\dx
      \\ \nonumber & \leq 
 c\kappa \int_{B_{r_2}}|\tau_hV(\E u)|^2\,\dx      
+\frac{s_\phi}{\kappa}|h|^{2\sigma}\int_{B_{r_2}}\zeta^2(k_\sigma(x)+k_\sigma(x+he_i))^2\phi(\E(h))\,\dx.  
  \end{align}
  From the inequality
  $\phi(\E(h))\le \phi(|\E u(x)|)+\phi(|\E u(x+he_i)|)$  and Equations \eqref{est-translation}  and \eqref{seconda-split} we have that
  \begin{align}
      \label{july30}
      &\int_{B_{r_2}}\zeta^2(k_\sigma(x)+k_\sigma(x+he_i))^2\phi(\E(h))\,\dx\leq \left(\int_{B_{r_2}}(k_\sigma(x)+k_\sigma(x+he_i))^{ {{m_\sigma}}}\,\dx\right)^{\frac{2}{ {{m_\sigma}}}}\left(\int_{B_{r_2}}\Big(\zeta^2\phi(\E(h))\Big)^{\frac{ {{m_\sigma}}}{ {{m_\sigma}}-2}}\,\dx\right)^{\frac{ {{m_\sigma}}-2}{ {{m_\sigma}}}}
      \\ \nonumber &\leq \left(2^{{{m_\sigma}-1}}\int_{B_{R_2}}k_\sigma^{ {{m_\sigma}}}\,\dx\right)^{\frac{2}{ {{m_\sigma}}}}\left(\int_{B_{r_2}}\Big(\zeta^2(x)\phi(\E(u(x)))\Big)^{\frac{ {{m_\sigma}}}{ {{m_\sigma}}-2}}\,\dx + \int_{B_{r_2}}\Big(\zeta^2(x)\phi(\E(u(x+he_i)))\Big)^{\frac{ {{m_\sigma}}}{ {{m_\sigma}}-2}}\,\dx
      \right)^{\frac{ {{m_\sigma}}-2}{ {{m_\sigma}}}}
      \\ \nonumber &= \left( 2^{{{m_\sigma}-1}}\int_{B_{R_2}}k_\sigma^{ {{m_\sigma}}}\,\dx\right)^{\frac{2}{ {{m_\sigma}}}}\left(\int_{B_{r_2}}\Big(\zeta^2(x)\phi(\E(u(x)))\Big)^{\frac{ {{m_\sigma}}}{ {{m_\sigma}}-2}}\,\dx + \int_{B_{r_2}(x_0+he_i)}\Big(\zeta^2(x-he_i)\phi(\E(u(x)))\Big)^{\frac{ {{m_\sigma}}}{ {{m_\sigma}}-2}}\,\dx
      \right)^{\frac{ {{m_\sigma}}-2}{ {{m_\sigma}}}}
      \\ \nonumber &\leq \left( 2^{{{m_\sigma}-1}}
      \int_{B_{R_2}}k_\sigma^{ {{m_\sigma}}}\dx\right)^{\frac{2}{ {{m_\sigma}}}}\left(\int_{B_{r_2}}|V(\mathcal Eu)|^{\frac{ {2m_\sigma}}{ {{m_\sigma}}-2}}\,\dx + \int_{B_{r_2}(x_0+he_i)}|V(\mathcal Eu)|^{\frac{ {2m_\sigma}}{ {{m_\sigma}}-2}}\,\dx
      \right)^{\frac{ {{m_\sigma}}-2}{ {{m_\sigma}}}}.
\end{align}
Since ${{m_\sigma}}>\frac{n}{\sigma}$, given any   $\beta \in (\frac{n}{m_\sigma,  }\sigma)$, we have that $\frac{2{{m_\sigma}}}{{{m_\sigma}}-2}<\frac{2n}{n-2\beta}$.
Via H\"older's inequality with suitable exponents, one can hence  deduce that
\begin{align}
    \label{july33}
\left(\int_{B_{r_2}}|V(\mathcal Eu)|^{\frac{ {2m_\sigma}}{ {{m_\sigma}}-2}}\,\dx  
      \right)^{\frac{ {{m_\sigma}}-2}{ {{m_\sigma}}}} \leq \left(\int_{B_{r_2}}|V(\E u)|^2\,\dx\right)^{{1-\frac{n}{{m_\sigma}\beta}}}\left(\int_{B_{r_2}}|V(\E u)|^{\frac{2n}{n-2\beta}}\,\dx\right)^{{\frac{n-2\beta}{{m_\sigma}\beta}}}.
\end{align}
Analogously,
\begin{align}
    \label{july34}
\left(\int_{B_{r_2}(x_0+he_i)}|V(\mathcal Eu)|^{\frac{ {2m}}{ {{m_\sigma}}-2}}\,\dx  
      \right)^{\frac{ {{m_\sigma}}-2}{ {{m_\sigma}}}} \leq \left(\int_{B_{r_2}(x_0+he_i)}|V(\E u)|^2\,\dx\right)^{{1-\frac{n}{{m_\sigma}\beta}}}\left(\int_{B_{r_2}(x_0+he_i)}|V(\E u)|^{\frac{2n}{n-2\beta}}\,\dx\right)^{{\frac{n-2\beta}{{m_\sigma}\beta}}}.
\end{align}
From the inequality \eqref{besov-emb} one can infer that
\begin{align}   \label{july31}\nonumber
&\left(\int_{B_{r_2}}|V(\mathcal Eu)|^{\frac{ {2n}}{ n-2\beta}}\,\dx 
      \right)^{{\frac{n-2\beta}{{m_\sigma}\beta}}} \\
      &\qquad\leq  
      c\bigg((R_2-r_2)^{2(\sigma-\beta)}\sup_{|h|\leq h_0}\int_{B_{R_2}}\frac{|\tau_hV(\E u)|^2}{|h|^{2\sigma}}\,\dx+\frac{1}{(R_2-r_2)^{2\beta}}\int_{{B_{{ R_2}}}}|V(\E u)|^2\,\dx\bigg)^{{\frac{n}{{m_\sigma}\beta}}},
\end{align}
and
\begin{align}   \label{july32}
&\left(\int_{B_{r_2}(x_0+he_i)}|V(\mathcal Eu)|^{\frac{ {2n}}{ n-2\beta}}\,\dx  \right)^{{\frac{n-2\beta}{{m_\sigma}\beta}}}
\leq 
\left(\int_{B_{r_2+h_0}}|V(\mathcal Eu)|^{\frac{ {2n}}{ n-2\beta}}\,\dx \right)^{{\frac{n-2\beta}{{m_\sigma}\beta}}}
\\ \nonumber &\leq \left(\int_{B_{\frac 12 (r_2+R_2)}}|V(\mathcal Eu)|^{\frac{ {2n}}{ n-2\beta}}\,\dx  \right)^{{\frac{n-2\beta}{{m_\sigma}\beta}}}
\\ \nonumber
  & \leq  c
  {
  \bigg((R_2-r_2)^{2(\sigma-\beta)}
  }
  \sup_{{|h|\leq h_0}}\int_{B_{R_2}}\frac{|\tau_hV(\E u)|^2}{|h|^{2\sigma}}\,\dx+
  {
  \frac{1}{(R_2-r_2)^{2\beta}}
  }
  \int_{B_{R_2}}|V(\E u)|^2\,\dx\bigg)^{ {\frac{n}{{m_\sigma}\beta}}},
\end{align}
Combining the inequalities \eqref{july30}--\eqref{july32} and making use of Young's inequality enable us to deduce that
\begin{align}\label{est:fractional0}
\int_{B_{r_2}}&\zeta^2({k_\sigma}+{k_\sigma}(x+h))^2\phi(\E(h))\,\dx
  \\  \nonumber \le& c\left((R_2-r_2)^{ {\frac{n(\sigma-\beta)}{\beta}}}\int_{B_{R}}k_\sigma^{ {{m_\sigma}}} \,\dx\right)^{\frac{2}{ {{m_\sigma}}}}\left(\int_{B_{R}}|V(\E u)|^2\,\dx\right)^{ {\frac{{m_\sigma}\beta-n}{{m_\sigma}\beta}}}\left(\sup_{|h|\le h_0}\int_{B_{R_2}}\frac{|\tau_hV(\E u)|^2}{|h|^{2\sigma}}\,\dx\right)^{ {\frac{n}{{m_\sigma}\beta}}}
  \\ \nonumber  & \quad+\frac{c}{(R_2-r_2)^{ {\frac{2n}{{m_\sigma}}}}}
  \left(\int_{B_{R}}k_\sigma^{{m_\sigma}} \,\dx\right)^{\frac{2}{{m_\sigma}}}\int_{B_{R}}|V(\E u)|^2\,\dx
 \\ \nonumber & \leq 
  \varepsilon \left(\sup_{|h|\leq h_0}\int_{B_{R_2}}\frac{|\tau_hV(\E u)|^2}{|h|^{2\sigma}}\,\dx\right) +c_\varepsilon\left((R_2-r_2)^{ {\frac{n(\sigma-\beta)}{\beta}}}\int_{B_{R}}k_\sigma^{{m_\sigma}}(x)\,\dx\right)^{ {\frac{2}{{m_\sigma}-\frac{n}{\beta}}}}\int_{B_{R}}|V(\E u)|^2\,\dx\\ \nonumber & \quad  + { \frac{c}{(R_2-r_2)^{ {\frac{2n}{{m_\sigma}}}}}\left(\int_{B_{R}}k_\sigma^{{m_\sigma}}\,\dx\right)^{\frac{2}{{m_\sigma}}}\int_{B_{R}}|V(\E u)|^2\,\dx}\\
 \nonumber & \leq 
  \varepsilon \left(\sup_{|h|\leq h_0}\int_{B_{R_2}}\frac{|\tau_hV(\E u)|^2}{|h|^{2\sigma}}\,\dx\right) \\ \nonumber & \quad +
   {
  \frac{c_\varepsilon}{(R_2-r_2)^{2\sigma}}
  }\left({ 1}+(R_2-r_2)^{
 {
 \sigma {m_\sigma}-n
 }
 }
 \int_{B_{R}}k_\sigma^{{m_\sigma}}\,\dx\right)^{ {\frac{2}{{m_\sigma}-\frac{n}{\beta}}}}\int_{B_{R}}|V(\E u)|^2\,\dx
 \end{align}
 Choosing $\varepsilon$ so small that $c_\kappa \varepsilon \leq c \kappa$ and coupling  \eqref{pre-I-1-bound} with \eqref{est:fractional0} yield:
 \begin{align}\label{I-1-bound}
    |\mathrm{I}_1|
    &\le
      c\kappa \int_{B_{r_2}}|\tau_hV(\E u)|^2\,\dx+
     c\kappa |h|^{2\sigma} \left(\sup_{{|h|\leq h_0}}\int_{B_{R_2}}\frac{|\tau_hV(\E u)|^2}{|h|^{2\sigma}}\,\dx\right)\\\nonumber 
  &+ \frac{c_{\kappa}'|h|^{2\sigma}}{(R_2-R_1)^{ {2\sigma}}}
  \left(1+ {R^{\sigma {m_\sigma}-n}}\int_{B_{R}}k_\sigma^{{m_\sigma}}\,\dx\right)^{{\frac{2}{{m_\sigma}-\frac{n}{\beta}}}}\int_{B_{R}}|V(\E u)|^2\,\dx\\\nonumber
  &\le
      c\kappa \int_{B_{r_2}}|\tau_hV(\E u)|^2\,\dx+
     c_\kappa |h|^{2\sigma} \left(\sup_{{|h|\leq h_0}}\int_{B_{R_2}}\frac{|\tau_hV(\E u)|^2}{|h|^{2\sigma}}\,\dx\right)\\\nonumber 
  &+ \frac{c_{\kappa}'|h|^{2\sigma}}{(R_2-R_1)^{ {2\sigma}}}
  \left(1+
  {R^{\sigma {m_\sigma}-n}}
 \int_{B_{R}}k_\sigma^{{m_\sigma}}\,\dx\right)^{{\frac{2}{{m_\sigma}-\frac{n}{\beta}}}}\left(\int_{B_{R}}\phi(|\nabla u|)\,\dx\right).
  \end{align}
  In the last step, we applied \eqref{seconda-split}. 
    \\ To bound $\mathrm{I}_2$, 
one can exploit the assumption \eqref{ip2b},  
  the inclusion $\spt G\subset B_{r_2}$, and the inequality $r_2+|h|\le R_2$ to obtain:
  \begin{align*}
    |\mathrm{I}_2|
    &\le
      2\int_{B_R}\big|a(x,\E u)\big|\,|\tau_{-h}G\big|\,\dx
      \le
    c\int_{B_{R_2}}\phi'(|\E u|)\,|\tau_{-h}G|\,\dx.
  \end{align*}
  Let 
  $\gamma$ be the exponent defined in \eqref{gamma}. From  Young's
  inequality~\eqref{eq:young}, the inequality
  \eqref{est-diff-quotient} with $v=(r_2-r_1)G/|h|^\gamma$,
  and the inequalities~\eqref{homogeneity-phi-star}   and \eqref{july59}  we hence infer that
 \begin{align}\label{first-est-I2}
    |\mathrm{I}_2|
    &\le
      c|h|^{\min\{s_\phi',2\}}\int_{B_{R_2}}\phi'(|\E u|)\,\frac{|\tau_{-h}G|}{|h|^{\gamma+1}}\,\dx\\\nonumber
    &\le
      c|h|^{\min\{s_\phi',2\}}
      \int_{B_{R_2}}\left[
      \kappa
      \phi\left(\frac{r_2-r_1}{|h|^\gamma}\frac{|\tau_{-h}G|}{|h|}\right)
      +
      c_\kappa \widetilde \phi\left(\frac1{r_2-r_1}\phi'(|\E u|)\right)\right]\,\dx\\\nonumber
    &\le
      c|h|^{\min\{s_\phi',2\}}\left[\kappa\int_{B_{r_2}}
      \phi\left(\frac{r_2-r_1}{|h|^\gamma}|\nabla G|\right)\,\dx
      +
      c_\kappa \Bigg(\frac{1}{(r_2-r_1)^{i_\phi'}}
     +\frac{1}{(r_2-r_1)^{s_\phi'}}
      \Bigg)\int_{B_{R_2}}\phi(|\E u|)
      \,\dx\right].
  \end{align}
  Note that the last inequality also rests on the inclusion $\spt G\subset B_{r_2}$.
  Thanks to \eqref{properties-zeta},
  \begin{align*}
    |\nabla G|
    &\le
      |\nabla\zeta|\,|\zeta \nabla \tau_hu|
      +
      (|\nabla ^2\zeta|+|\nabla\zeta|^2)\,|\tau_hu|\le
      |\nabla\zeta|\,|\nabla(\zeta\tau_hu)|+(|\nabla^2\zeta|+2|\nabla\zeta|^2)\,|\tau_hu|\\
    &\le
    \frac{c}{r_2-r_1}|\nabla(\zeta\tau_hu)|+\frac{c}{(r_2-r_1)^2}\,|\tau_hu|.
\end{align*}
Hence, 
\begin{align}\label{bound-phi-DG}
    \phi\left(\frac{r_2-r_1}{|h|^\gamma}|\nabla G|\right)
    &\le
      c\phi\left(\frac{|\nabla(\zeta\tau_hu)|}{|h|^\gamma}\right)
      +
      c\phi\left( {R^{-\gamma}\frac{R}{r_2-r_1}}\frac{|\tau_hu|}{|h|}\right),\\\nonumber
    &\le
      c\phi\left(\frac{|\nabla (\zeta\tau_hu)|}{|h|^\gamma}\right)
      +
      {c\left(\frac{1}{R^{\gamma s_\phi}}+\frac{1}{R^{\gamma i_\phi}}\right)\frac{R^{s_\phi}}{(r_2-r_1)^{s_\phi}}}\phi\left(\frac{|\tau_hu|}{|h|}\right).
  \end{align}
  Here, we have made use of the monotonicity of $\phi$,
the inequality~\eqref{sub-additivity-phi},  the bound $|h|\le R$, Equation \eqref{homogeneity-phi}, and the inequality
  $r_2-r_1\le R $. From 
 ~\eqref{first-est-I2} and \eqref{bound-phi-DG} we deduce, 
 via ~\eqref{est-diff-quotient}, that
  \begin{align}\label{second-est-I2}
    |\mathrm{I}_2|
    &\le
      c\kappa |h|^{\min\{s_\phi',2\}}\int_{B_{r_2}}
      \phi\left(\frac{|\nabla (\zeta\tau_hu)|}{|h|^\gamma}\right) \dx\\\nonumber
    &\qquad
      +
      c_\kappa |h|^{\min\{s_\phi',2\}}\left( {\left(\frac{1}{R^{\gamma s_\phi}}+\frac{1}{R^{\gamma i_\phi}}\right)\frac{R^{s_\phi}}{(r_2-r_1)^{s_\phi}}}
      +
      \frac{1}{(r_2-r_1)^{i_\phi'}}
      +\frac{1}{(r_2-r_1)^{s_\phi'}}
    \right)\int_{B_{R_2}}\phi(|\nabla u|)
      \,\dx.
  \end{align}
Owing to Korn's
  inequality~\eqref{korn-0} 
  and~\eqref{july36}, 
  \begin{align}\label{bound-higher-deriv}
    \int_{B_{r_2}}
    \phi\left(\frac{|\nabla(\zeta\tau_hu)|}{|h|^\gamma}\right)\,\dx
    &\le
    c\int_{B_{r_2}}
      \phi\left(\frac{|\E(\zeta\tau_hu)|}{|h|^\gamma}\right)\,\dx\le
      c'\int_{B_{r_2}}
      \left[
      \phi\left(\frac{\zeta|\tau_h\E u|}{|h|^\gamma}\right)
      +
      \phi\left(\frac{|\nabla\zeta||\tau_hu|}{|h|^\gamma}\right)\right]\dx.
  \end{align}
  The use of Equations ~\eqref{add-shift}, \eqref{homogeneity-phi-shifted}, \eqref{a-coercivity-split}, and 
  the equality
  $\gamma\max\{s_\phi,2\}=\min\{s_\phi',2\}$ result in the chain:
  \begin{align}\label{bound-hot}
    \int_{B_{r_2}}
    \phi\left(\frac{\zeta|\tau_h\E u|}{|h|^\gamma}\right)
    \dx
    &\le
      c\int_{B_{r_2}}
      \left(
      \phi_{|\E u|}\left(\frac{|\tau_h\E u|}{|h|^\gamma}\right)
      +
      \phi(|\E u|)
      \right)\, \dx \le
      c'\int_{B_{r_2}}
      \left(
      \frac{\phi_{|\E u|}(|\tau_h\E u|)}{|h|^{\min\{s_\phi',2\}}}
      +
      \phi(|\E u|)
      \right)\, \dx\\\nonumber
    &\le
      c''\int_{B_{r_2}}
      \left(
      \frac{|\tau_hV(\E u)|^2}{|h|^{\min\{s_\phi',2\}}}
      +
      \phi(|\E u|)
      \right)\, \dx.
  \end{align}
Next,  we have that
  \begin{align}\label{bound-lot}
    \int_{B_{r_2}}
    \phi\left(\frac{|\nabla\zeta||\tau_hu|}{|h|^\gamma}\right)\dx
    &\le
      \int_{B_{r_2}}
      \phi\left(\frac{cR^{1-\gamma}}{r_2-r_1}
      \frac{|\tau_hu|}{|h|}\right)\dx\le
    {c\left(\frac{1}{R^{\gamma s_\phi}}+\frac{1}{R^{\gamma i_\phi}}\right)\frac{R^{s_\phi}}{(r_2-r_1)^{s_\phi}}}\int_{B_{r_2}}
      \phi\left(\frac{|\tau_hu|}{|h|}\right)\dx\\\nonumber
    &\le
      {c\left(\frac{1}{R^{\gamma s_\phi}}+\frac{1}{R^{\gamma i_\phi}}\right)\frac{R^{s_\phi}}{(r_2-r_1)^{s_\phi}}}\int_{B_{R_2}}
      \phi(|\nabla u|)\dx,
  \end{align}
  where the first inequality holds by \eqref{properties-zeta} and the inequality $|h|\le R$, the second one by
\eqref{homogeneity-phi}, and the last one by \eqref{est-diff-quotient}.
The inequalities \eqref{bound-higher-deriv}--\eqref{bound-lot} yield:
  \begin{align}\label{key-estimate}
    \int_{B_{r_2}}
    \phi\left(\frac{|\nabla (\zeta\tau_hu)|}{|h|^\gamma}\right) \dx
    &\le
      \frac{c}{|h|^{\min\{s_\phi',2\}}}\int_{B_{r_2}}
      |\tau_hV(\E u)|^2\, \dx
      \\\nonumber
      &+
    c\left({1}+\bigg(\frac{1}{R^{\gamma s_\phi}}+\frac{1}{R^{\gamma i_\phi}}\bigg)\frac{R^{s_\phi}}{(r_2-r_1)^{s_\phi}}\right)\int_{B_{R_2}}
      \phi(|\nabla u|)\dx.
  \end{align}
  Coupling the latter estimate with~\eqref{second-est-I2} tells us that
  \begin{align}\label{est-I2}
    |\mathrm{I}_2|
    &\le
      c\kappa\int_{B_{r_2}}
      |\tau_hV(\E u)|^2\, \dx
      \\\nonumber
      &+
      c_\kappa |h|^{\min\{s_\phi',2\}}\left( 1+{\left(\frac{1}{R^{\gamma s_\phi}}+\frac{1}{R^{\gamma i_\phi}}\right)\frac{R^{s_\phi}}{(r_2-r_1)^{s_\phi}}}
      +
      \frac{1}{(r_2-r_1)^{i_\phi'}}
      +\frac{1}{(r_2-r_1)^{s_\phi'}}
    \right)\int_{B_{R_2}}\phi(|\nabla u|)
      \,\dx,
  \end{align}
  for every $\kappa\in(0,1]$.
  \\
  Our task is now to derive an estimate for $\mathrm{I}_3$.
  The assumption~\eqref{ip2b} implies:
  \begin{align*}
    |\mathrm{I}_3|
    &\le
    \int_{B_{R_2}}|a(x,\E u)|\,|\tau_{-h}\E w|\,\dx\le
    { L}|h|^{\min\{s_\phi',2\}}\int_{B_{R_2}}\phi'(|\E
      u|)\frac{|\tau_{-h}\nabla  w|}{|h|^{\gamma+1}}\,\dx\\
    &=
      L|h|^{\min\{s_\phi',2\}}\int_{B_{R_2}}\frac{1}{r_2-r_1}\phi'(|\E
      u|)\frac{\eta |\tau_{-h}\nabla w|}{|h|}\,\dx,
  \end{align*}
  where $\eta$ is defined by~\eqref{choice-lambda}.
  The rightmost side of the latter chain can be bounded via Young's
  inequality~\eqref{eq:young} and Equations \eqref{choice-w-3},
  \eqref{homogeneity-phi-star},
  and \eqref{july59} to deduce that
  \begin{align}\label{first-est-I3}
    |\mathrm{I}_3|
    &\le
      c|h|^{\min\{s_\phi',2\}}\int_{B_{R_2}}
      \left[\kappa\phi\left(\frac{\eta |\tau_{-h}\nabla w|}{|h|}\right)
      +c_\kappa\widetilde \phi\left(\frac1{r_2-r_1}\phi'(|\E
      u|)\right)\right]\dx\\\nonumber
    &\le
      c|h|^{\min\{s_\phi',2\}}
      \left[\kappa
      \int_{B_{r_2}}\left[\phi\left(\eta|\nabla g|\right)+\phi\left(\frac{\eta|g|}{{r_2}}\right)\right]\dx
       +
       c_\kappa\Bigg(
       \frac{1}{(r_2-r_1)^{i_\phi'}}
      +\frac{1}{(r_2-r_1)^{s_\phi'}}
      \Bigg)\int_{B_{R_2}}
      \phi(|\E u|)\dx\right].
  \end{align}
  From Equation \eqref{properties-zeta} and the inequality $|h|^{1-\gamma}\le R^{1-\gamma}$ we
  obtain:
  \begin{align*}
    \eta |\nabla g|
   & \le
      2\eta |\nabla\zeta|\,|\zeta \nabla \tau_hu|
      +
      2\eta(|\nabla ^2\zeta|+|\nabla\zeta|^2)\,|\tau_hu|\\ \nonumber &\le
      2\eta|\nabla\zeta|\,|\nabla (\zeta\tau_hu)|+2\eta(|\nabla ^2\zeta|+2|\nabla\zeta|^2)\,|\tau_hu| \le
      \frac{c|\nabla (\zeta\tau_hu)|}{|h|^{\gamma}}
      +
      \frac{cR^{1-\gamma}|\tau_hu|}{(r_2-r_1)|h|}\,,
\end{align*}
and
\begin{align*}
  \frac{\eta|g|}{{r_2}}
  \le
  \frac{c|\tau_hu|}{{r_2}|h|^\gamma}
  \le
  \frac{cR^{1-\gamma}|\tau_hu|}{(r_2-r_1)|h|}.
\end{align*}
Hence,  via \eqref{sub-additivity-phi} and~\eqref{homogeneity-phi}, we deduce:
\begin{align*}
  \int_{B_{r_2}}\left[\phi\left(\eta|\nabla g|\right)+\phi\left(\frac{\eta|g|}{{r_2}}\right)\right]\dx
  &\le
    c\int_{B_{r_2}}
    \phi\left(\frac{|\nabla (\zeta\tau_hu)|}{|h|^\gamma}\right) \dx\\\nonumber
    &+
     {c\left(\frac{1}{R^{\gamma s_\phi}}+\frac{1}{R^{\gamma i_\phi}}\right)\frac{R^{s_\phi}}{(r_2-r_1)^{s_\phi}}}\int_{B_{r_2}}
    \phi\left(\frac{|\tau_hu|}{|h|}\right) \dx.
\end{align*}
The last integral can be estimated through the inequality ~\eqref{est-diff-quotient}. Combining the resultant estimate
with~\eqref{first-est-I3} and making use of the
inequality~\eqref{key-estimate} yield:
  \begin{align}\label{est-I3}
    |\mathrm{I}_3|
    &\le
      c|h|^{\min\{s_\phi',2\}}
      \Bigg[\kappa
      \int_{B_{r_2}}
      \phi\left(\frac{|\nabla (\zeta\tau_hu)|}{|h|^\gamma}\right)\dx
     \\\nonumber &\qquad+
      c_\kappa\left( {\left(\frac{1}{R^{\gamma s_\phi}}+\frac{1}{R^{\gamma i_\phi}}\right)\frac{R^{s_\phi}}{(r_2-r_1)^{s_\phi}}}
      +
      \frac{1}{(r_2-r_1)^{i_\phi'}}
      +\frac{1}{(r_2-r_1)^{s_\phi'}}
      \right)
      \int_{B_{R_2}}\phi(|\nabla u|)\dx\Bigg]\\\nonumber
    &\le
      c\kappa\int_{B_{r_2}} |\tau_hV(\E u)|^2 \dx
      \\\nonumber &\qquad+
      c_\kappa|h|^{\min\{s_\phi',2\}}\left( {\left(\frac{1}{R^{\gamma s_\phi}}+\frac{1}{R^{\gamma i_\phi}}\right)\frac{R^{s_\phi}}{(r_2-r_1)^{s_\phi}}}
      +
      \frac{1}{(r_2-r_1)^{i_\phi'}}
      +\frac{1}{(r_2-r_1)^{s_\phi'}}
      \right)
      \int_{B_{R_2}}\phi(|\nabla u|)\dx.
  \end{align}
A bound for the term $\mathrm{I}_4$ calls into play assumption \eqref{psi-assumption-main} and the Poincar\'e-Sobolev inequality \eqref{SPnew1}. Thanks to \eqref{june10}, the former amounts to requiring that
\begin{equation}\label{psi-assumption''}
  \phi^{-1}(t)^{\theta}  \phi_{0,n}^{\,-1}(t)^{1-\theta} \psi^{-1}(t)\le  c(t + 1)
  \qquad\mbox{for}\,\, t\geq 0.
\end{equation}
This inequality in turn ensures that
\begin{align}
    \label{june9}
    r^{{\theta}}s^{{1-\theta}}t \leq c\big( \phi (r) +  \phi_{0,n} (s) + \psi (t) + 1\big) \quad \text{for $r,s,t \geq 0$.}
\end{align}
Let $\lambda>0$ to be chosen later and 
set
\begin{equation*}
  \Phi_\lambda =\int_{B_{r_2}} \phi\left(\frac{|\nabla (\zeta^2\tau_h u)|}{\lambda|h|^{ {\theta\gamma}}}\right)\dx.
\end{equation*}
Thanks to the inequalities  \eqref{june9}, \eqref{est-translation}, and \eqref{est-diff-quotient},
\begin{align}\label{est-RHS-psi}
|\mathrm{I}_4|&=\int_{B_{r_2}}|\tau_{-h}(\zeta^2\tau_hu)|\,|f|\,\dx\\ \nonumber
    &=
      |h|^{\theta(1+\gamma)}\lambda (|B_{r_2}| +\Phi_\lambda)^{\frac{{1-\theta}}{n}}
      \int_{B_{r_2}}\Big(\frac{|\tau_{-h}(\zeta^2\tau_hu)|}{\lambda|h|^{ {\theta\gamma+1}}}\Big)^{\theta}\Big(\frac{|\tau_{-h}(\zeta^2\tau_hu)|}{\lambda|h|^{ {\theta\gamma}}(|B_{r_2}|  +\Phi_\lambda)^{\frac{1}{n}}}\Big)^{1-\theta}\,|f|\,\dx\\ \nonumber  
    &\le c
     |h|^{\theta(1+\gamma)}\lambda (|B_{r_2}| +\Phi_\lambda)^{\frac{{1-\theta}}{n}}
      \int_{B_{r_2}}
      \left[\phi\left(\frac{|\tau_{-h}(\zeta^2\tau_hu)|}{\lambda|h|^{ {\theta\gamma+1}}}\right)
+ \phi_{0,n}\left(\frac{|\tau_{-h}(\zeta^2\tau_hu)|}{\lambda|h|^{ {{\theta\gamma}}}(|B_{r_2}| +\Phi_\lambda)^{\frac{1}{n}}}\right)
     +
     \psi(|f|) + 1\right]\,\dx \\
     \nonumber
 &\le
    c|h|^{\theta(1+\gamma)}\lambda (|B_{r_2}| +\Phi_\lambda)^{\frac{{1-\theta}}{n}} 
    \int_{B_{r_2}}
\left[\phi\left(\frac{|\nabla(\zeta^2\tau_hu)|}{\lambda|h|^{ {\theta\gamma}}}\right)
+ \phi_{0,n}\left(\frac{{ 2}|\zeta^2\tau_hu|}{\lambda|h|^{ {\theta\gamma}}(|B_{r_2}| +\Phi_\lambda)^{\frac{1}{n}}}\right)   +
    \psi(|f|) + 1\right]\,\dx.
  \end{align}
  Note that, when making use of the inequalities \eqref{est-translation} and \eqref{est-diff-quotient} with $v=\zeta^2\tau_hu$, we have exploited the fact that the support of the latter function is contained in $B_{r_2}$.
 \\
  An application of the Sobolev-Poincar\'e
inequality~\eqref{SPnew1} to the function $\frac{\zeta^2\tau_h u}{\lambda|h|^{{{\theta}\gamma}}}$ and the $\Delta_2$ property of $\phi$ yield:
  \begin{equation}\label{july39}
\int_{B_{r_2}} \phi_{0,n}\left(\frac{{ 2}|\zeta^2\tau_hu|}{\lambda|h|^{ {\theta\gamma}}(|B_{r_2}| +\Phi_\lambda)^{\frac{1}{n}}}\right)\dx
    \le
c\int_{B_{r_2}}\ \phi\left(\frac{|\nabla(\zeta^2\tau_hu)|}{\lambda|h|^{ {\theta\gamma}}}\right)\dx
    =
    c\Phi_\lambda.
  \end{equation}
  From the inequalities \eqref{est-RHS-psi} and \eqref{july39} we deduce that
\begin{align}\label{bound-RHS}
    |\mathrm{I}_4|
    & \le
      c|h|^{\theta(1+\gamma)}\lambda|B_{r_2}|^{\frac{{1-\theta}}{n}}
      \Phi_\lambda
      + c|h|^{ {\theta(1+\gamma)}}\lambda|B_{r_2}|^{\frac{{1-\theta}}{n}}
  \int_{B_{r_2}}(\psi(|f|)+1)\,\dx
  \\ \nonumber & \quad
+ 
 c|h|^{ \theta(1+\gamma)}\lambda \Phi_\lambda^{\frac{{1-\theta}}{n}+1} 
 + c|h|^{ {\theta(1+\gamma)}}\lambda \Phi_\lambda^{\frac{{1-\theta}}{n}}
  \int_{B_{r_2}}(\psi(|f|)+1)\,\dx
  = \mathrm{J}+ \mathrm{JJ} + 
\mathrm{I}+\mathrm{II}.
\end{align}
Now, set
\begin{align}\label{july40}
  \lambda =\eps^{-\frac{1}{i_\phi-1}}\max\Big\{\Phi_1^\frac{{1-\theta}}{i_\phi(n+{1-\theta})-n},\Phi_1^\frac{{1-\theta}}{{s_\phi}(n+{1-\theta})-n}\Big\},
\end{align}
where $\eps\in(0,1)$ will be chosen later.
The following chain can be verified by
distinguishing the cases when
$\Phi_1\ge1$ or $\Phi_1<1$ and using Equation \eqref{homogeneity-phi}:
\begin{align*}
  \Phi_\lambda
  &=
    \int_{B_{r_2}}\phi\left(\frac{|\nabla(\zeta^2\tau_h u)|}{\lambda|h|^{ \theta \gamma}}\right)\dx\le
  \min\Big\{\Phi_1^{-\frac{{i_\phi}({1-\theta})}{{i_\phi}(n+{1-\theta})-n}},\Phi_1^{-\frac{{s_\phi}({1-\theta})}{{s_\phi}(n+{1-\theta})-n}}\Big\}
  \int_{B_{r_2}}\phi\left(\eps^{\frac{1}{{i_\phi}-1}}\frac{|\nabla(\zeta^2\tau_h
    u)|}{|h|^{ {\theta\gamma}}}\right)\dx\\\nonumber
  &\le
   \eps^{i_\phi'}\min\Big\{\Phi_1^{-\frac{{i_\phi}({1-\theta})}{{i_\phi}(n+{1-\theta})-n}},\Phi_1^{-\frac{{s_\phi}({1-\theta})}{{s_\phi}(n+{1-\theta})-n}}\Big\}
  \int_{B_{r_2}}\phi\left(\frac{|\nabla(\zeta^2\tau_h
    u)|}{|h|^{ {\theta\gamma}}}\right)\dx  =
    \eps^{i_\phi'}\min\Big\{\Phi_1^{\frac{n({i_\phi}-1)}{{i_\phi}(n+{1-\theta})-n}},\Phi_1^{\frac{n({s_\phi}-1)}{{s_\phi}(n+{1-\theta})-n}}\Big\}.
\end{align*}
Hence,
\begin{align*}
  \lambda\Phi_\lambda^{\frac{{1-\theta}}{n}}
  \le
  \eps^{-\frac{1}{{i_\phi}-1}+\frac{({1-\theta}) {i_\phi'}}{n}}\max\Big\{\Phi_1^{\frac{({1-\theta})
  {i_\phi}}{{i_\phi}(n+{1-\theta})-n}},\Phi_1^{\frac{({1-\theta}) {s_\phi}}{{s_\phi}(n+{1-\theta})-n}}\Big\}
  \le
  \eps^{-\frac{1}{{i_\phi}-1}}\max\Big\{\Phi_1^{\frac{ ({1-\theta})
  {i_\phi'}}{n+({1-\theta}) {i_\phi'}}},\Phi_1^{\frac{({1-\theta}) {s_\phi'}}{n+({1-\theta}) {s_\phi'}}}\Big\}.
\end{align*}
Therefore, 
\begin{align}\label{june5}
  \mathrm{I}
  =
  c|h|^{ {\theta(1+\gamma)}}\lambda \Phi_\lambda^{\frac{{1-\theta}+n}{n}}
  \le
  c|h|^{{\theta(1+\gamma)}}\eps \Phi_1
\end{align}
and
\begin{align}\label{june6}
  \mathrm{II}
  &=
  c|h|^{ {\theta(1+\gamma)}}\lambda \Phi_\lambda^{\frac{{1-\theta}}{n}}\int_{B_{r_2}}(\psi(|f|)+1)\,\dx\\
  & \nonumber \le
  c|h|^{ {\theta(1+\gamma)}}\eps^{-\frac{1}{{i_\phi}-1}}
    \max\Big\{\Phi_1^{\frac{({1-\theta}) {i_\phi'}}{n+({1-\theta})
    {i_\phi'}}},\Phi_1^{\frac{ ({1-\theta}) {s_\phi'}}{n+({1-\theta})
    {s_\phi'}}}\Big\}\int_{B_{r_2}}(\psi(|f|)+1)\,\dx\\
  & \nonumber \le
    c|h|^{ {\theta(1+\gamma)}}\left[\eps\Phi_1+c_\eps\bigg(\int_{B_{r_2}}(\psi(|f|)+1)\,\dx\bigg)^{1+\frac{({1-\theta}) {i_\phi'}}{n}}+c_\eps\bigg(\int_{B_{r_2}}(\psi(|f|)+1)\,\dx\bigg)^{1+\frac{({1-\theta}){s_\phi'}}{n}}\right],
\end{align}
where the last inequality 
rests on Young's inequality. 
Next, 
\begin{align}
    \label{june3}
    \mathrm{J} & = c |h|^{ {\theta(1+\gamma)}}\lambda|B_{r_2}|^{\frac{{1-\theta}}{n}}
     \Phi_\lambda \\ \nonumber &\leq c |h|^{{\theta(1+\gamma)}}|B_{r_2}|^{\frac{{1-\theta}}{n}} \eps^{-\frac{1}{{i_\phi}-1}}\max\Big\{\Phi_1^\frac{{1-\theta}}{{i_\phi}(n+{1-\theta})-n},\Phi_1^\frac{{1-\theta}}{{s_\phi}(n+{1-\theta})-n}\Big\}\eps^{{i_\phi'}}\min\Big\{\Phi_1^{\frac{n({i_\phi}-1)}{{i_\phi}(n+{1-\theta})-n}},\Phi_1^{\frac{n({s_\phi}-1)}{{s_\phi}(n+{1-\theta})-n}}\Big\}
     \\ \nonumber &\leq  c |h|^{ {\theta(1+\gamma)}}|B_{r_2}|^{\frac{{1-\theta}}{n}} \eps \max\Big\{\Phi_1^\frac{n({i_\phi}-1)+{1-\theta}}{n({i_\phi}-1)+({1-\theta}){i_\phi}},\Phi_1^\frac{n({s_\phi}-1)+{1-\theta}}{n({s_\phi}-1)+({1-\theta}) {s_\phi}}\Big\}
     \\ \nonumber
     &\leq  
     c |h|^{\theta(1+\gamma)} \eps \Big(\Phi_1 + |B_{r_2}|^{1+\frac{({1-\theta}) i_\phi'}{n}}+|B_{r_2}|^{1+\frac{({1-\theta}) s_\phi'}{n}}\Big).     
      \end{align}
Note that the last inequality follows via two Young's inequalities with exponents $\frac{n(i_\phi-1)+({1-\theta}) i_\phi}{n(i_\phi-1)+{1-\theta}}$ and $\frac{n(i_\phi-1)+({1-\theta}) i_\phi}{({1-\theta})(i_\phi-1)}$, and  with similar  exponents where $i_\phi$ is replaced with
 $s_\phi$.
 Moreover,
\begin{align*}
    \mathrm{JJ} & = 
    c|h|^{\theta(1+\gamma)}\lambda|B_{r_2}|^{\frac{{1-\theta}}{n}}
  \int_{B_{r_2}}(\psi(|f|)+1)\,\dx
  \\ \nonumber & \leq  c|h|^{ {\theta(1+\gamma)}} |B_{r_2}|^{\frac{{1-\theta}}{n}}\eps^{-\frac{1}{{i_\phi}-1}}\Big(\Phi_1^\frac{{1-\theta}}{{i_\phi}(n+{1-\theta})-n} +\Phi_1^\frac{{1-\theta}}{{s_\phi}(n+{1-\theta})-n}\Big)\int_{B_{r_2}}(\psi(|f|)+1)\,\dx.
  \end{align*}
  Young's inequality with exponents $\frac{n+{1-\theta}}{{1-\theta}}$ and $\frac{n+{1-\theta}}{n}$ yields:
  \begin{align*}
    |B_{r_2}|^{\frac{{1-\theta}}{n}}\int_{B_{r_2}}(\psi(|f|)+1)\,\dx
    &\le
    \tfrac{{1-\theta}}{n+{1-\theta}}|B_{r_2}|^{\frac{n+{1-\theta}}{n}}+\tfrac{n}{n+{1-\theta}}\bigg(\int_{B_{r_2}}\psi(|f|)+1\,\dx\bigg)^{\frac{n+{1-\theta}}{n}} \le
\bigg(\int_{B_{r_2}}(\psi(|f|)+1)\,\dx\bigg)^{\frac{n+{1-\theta}}{n}}.
  \end{align*}
 Coupling the last two inequalities tells us that
  \begin{align}\label{june4}
 \mathrm{JJ} & \leq c|h|^{{\theta(1+\gamma)}} \bigg[\eps \Phi_1+ c_\eps\bigg(
\int_{B_{r_2}}(\psi(|f|)+1)\,\dx\bigg)^{
 1+\frac{({1-\theta})i_\phi'}{n}
} +
c_\eps\bigg(\int_{B_{r_2}}
(\psi(|f|)+1)\,\dx\bigg)^{
 1+\frac{({1-\theta}) s_\phi'}{n}
}
\bigg].
\end{align}
From the inequalities \eqref{bound-RHS}--\eqref{june4} we deduce that
\begin{align}\label{bound-RHS-2}
    |\mathrm{I}_4|
& \le
     |h|^{ {\theta(1+\gamma)}}\bigg[ c\eps\Phi_1+c_\eps\bigg(\int_{B_{r_2}}(\psi(|f|)+1)\,\dx\bigg)^{1+\frac{(1-\theta) {i_\phi'}}{n}}+ c_\eps\bigg(\int_{B_{r_2}}(\psi(|f|)+1)\,\dx\bigg)^{1+\frac{(1-\theta) {s_\phi'}}{n}}\bigg].
\end{align}
Thanks to {Korn's inequality \eqref{korn-0} and }
\eqref{july36},  
\begin{align}\label{july41}
  \Phi_1
  &\le
  {
  c\int_{B_{r_2}} \phi\left(\frac{|\E (\zeta^2\tau_h u)|}{|h|^{ {\theta\gamma}}}\right)\dx
  }
 \le
  c'\int_{B_{r_2}} \phi\left(\frac{\zeta^2|\tau_h\E u|}{|h|^{ {\theta\gamma}}}\right)\dx
  +
  c' \int_{B_{r_2}}\phi\left(\frac{|\nabla\zeta|\,|\tau_hu|}{|h|^{ {\theta\gamma}}}\right)\dx.
\end{align}
We bound the last two integrals separately. As for the former,
owing to Equations \eqref{add-shift}, \eqref{homogeneity-phi-shifted},  \eqref{a-coercivity-split}, and to the equality ${ {\gamma}}\max\{{s_\phi},2\}= {1+\gamma}$, we have that
\begin{align*}
  \int_{B_{r_2}}\phi\left(\frac{\zeta^2|\tau_h{\E u}|}{|h|^{ {\theta\gamma}}}\right)
    \dx
    &
\le
c\int_{B_{r_2}}\zeta^2\phi_{|{\E u}|}\left(\frac{|\tau_h{\E u}|}{|h|^{ {\theta\gamma}}}\right)+\phi(|{\E u}|) \,\dx\nonumber\\
  & \le
    \frac{c}{|h|^{{ {\theta\gamma}}\max\{{s_\phi},2\}}}\int_{B_{r_2}}\zeta^2\phi_{|{\E u}|}(|\tau_h{\E u}|)\,\dx
    +
    c\int_{B_{r_2}}\phi(|{\E u}|) \,\dx\nonumber\\
  & \le
    \frac{c}{|h|^{ {\theta(1+\gamma)}}}\int_{B_{r_2}}\zeta^2|\tau_h[V({\E u})]|^2\,\dx
   +
   c\int_{B_{r_2}}\phi(|{\E u}|) \,\dx.
\end{align*}
Since $|h|\le  \min\{1,R\}$ and $|\nabla\zeta|\le\frac c{r_2-r_1}$, by
the property~\eqref{homogeneity-phi} and the inequality \eqref{est-diff-quotient} the latter integral on the right-hand side of \eqref{july41} can be estimated as follows:
\begin{align}\label{july50}
  \int_{B_{r_2}}  \phi\Big(\frac{|\nabla\zeta|\,|\tau_hu|}{|h|^{ {\theta\gamma}}}\Big)\dx
  &\le
   c{\left(\frac{1}{R^{\gamma s_\phi}}+\frac{1}{R^{\gamma i_\phi}}\right)\frac{R^{s_\phi}}{(r_2-r_1)^{s_\phi}}}\int_{B_{r_2}}  \phi\Big(\frac{|\tau_hu|}{|h|}\Big)\dx \\\nonumber
  &\le
  {c\left(\frac{1}{R^{\gamma s_\phi}}+\frac{1}{R^{\gamma i_\phi}}\right)\frac{R^{s_\phi}}{(r_2-r_1)^{s_\phi}}}\int_{B_{R_2}} \phi(|\nabla u|)\dx.
\end{align} 
Altogether, we deduce that
\begin{align*}
  \Phi_1
  \le
  \frac{c}{|h|^{{\theta(1+\gamma)}}}\int_{B_{{r_2}}}\zeta^2|\tau_h[V(\E u)]|^2\,\dx
   +
c\left( {1+}\left(\frac{1}{R^{\gamma s_\phi}}+\frac{1}{R^{\gamma i_\phi}}\right)\frac{R^{s_\phi}}{(r_2-r_1)^{s_\phi}}\right)\int_{B_{R_2}} \phi(|\nabla u|)\dx.
\end{align*}
Coupling the latter inequality with~\eqref{bound-RHS-2} results in the following bound:
\begin{align}\label{est-I4}
 |\mathrm{I}_4|& \le
   c\eps\int_{B_{r_2}} |\tau_h[V(\E u)]|^2\dx\
  + c_\eps
     |h|^{ {\theta(1+\gamma)}}\Bigg[\bigg( {1+ }{\left(\frac{1}{R^{\gamma s_\phi}}+\frac{1}{R^{\gamma i_\phi}}\right)\frac{R^{s_\phi}}{(r_2-r_1)^{s_\phi}}} \bigg)\int_{B_{R_2}} \phi(|\nabla u|)\dx\\\nonumber
     &
     \qquad\qquad\qquad\qquad\quad+\bigg(\int_{B_{r_2}}(\psi(|f|)+1)\,\dx\bigg)^{1+\frac{(1-\theta) i_\phi'}{n}}
     +
     \bigg(\int_{B_{r_2}}(\psi(|f|)+1)\,\dx\bigg)^{1+\frac{(1-\theta) s_\phi'}{n}}
 \Bigg].
\end{align}
We finally focus on $\mathrm{I}_5$.  Set
\begin{align}
    \label{Psi}
    \Psi_\lambda= \int_{B_{R_2}}\phi\bigg(\frac{|\nabla w|}{\lambda|h|^{{\theta\gamma}}}\bigg)\, \d x
\end{align}
for $\lambda >0$.
Similarly to \eqref{est-RHS-psi}, one has that
\begin{align}\label{july45}
|\mathrm{I}_5| &\leq \int_{B_{R_2}} |f| |\tau_{-h}w|\,\dx    
\\ 
\nonumber
    &=
      |h|^{{\theta(1+\gamma)}} \lambda (|B_{R_2}| +\Psi_\lambda)^{\frac{1-\theta}{n}}
\int_{B_{R_2}}\Big(\frac{|\tau_{-h}w|}{\lambda|h|^{ {\theta\gamma}+1}}\Big)^{\theta}\Big(\frac{|\tau_{-h}w|}{\lambda|h|^{{\theta\gamma}}(|B_{R_2}|  +\Psi_\lambda)^{\frac{1}{n}}}\Big)^{1-\theta}\,|f|\,\dx\\ \nonumber  
    &\le c
     |h|^{{\theta(1+\gamma)}}\lambda (|B_{R_2}| +\Psi_\lambda)^{\frac{{1-\theta}}{n}}
      \int_{B_{R_2}}
     \left[ \phi\left(\frac{|\tau_{-h}w|}{ \lambda|h|^{{\theta\gamma}+1}}\right)
  + \phi_{0,n}\left(\frac{|\tau_{-h}w|}{ \lambda|h|^{{\theta\gamma}}(|B_{R_2}| +\Psi_\lambda)^{\frac{1}{n}}}\right)
     +
     \psi(|f|) + 1\right]\,\dx \\
     \nonumber
 &\le
    c|h|^{{\theta(1+\gamma)}} \lambda (|B_{R_2}| +\Psi_\lambda)^{\frac{{1-\theta}}{n}} 
    \int_{B_{R_2}}\left[
\phi\left(\frac{|\nabla w|}{ \lambda|h|^{{\theta\gamma}}}\right)
+ \phi_{0,n}\left(\frac{2|w|}{\lambda |h|^{{\theta\gamma}}(|B_{R_2}| +\Psi_\lambda)^{\frac{1}{n}}}\right)   +
    \psi(|f|) + 1\right]\,\dx.
\end{align}
The Sobolev-Poincar\'e inequality \eqref{SPnew1} now tells us that
\begin{align}
    \label{july46}
    \int_{B_{R_2}} \phi_{0,n}\left(\frac{2|w|}{ \lambda|h|^{{\theta\gamma}}(|B_{R_2}| +\Psi_\lambda)^{\frac{1}{n}}}\right)\, \d x \leq c \int_{B_{R_2}}\phi\bigg(\frac{|\nabla w|}{\lambda|h|^{{\theta\gamma}}}\bigg)\, \d x= c \Psi_\lambda.
\end{align}
Coupling \eqref{july45} with \eqref{july46} implies that
\begin{align}
    \label{july47}
    |\mathrm{I}_5| & \leq  c|h|^{{\theta(1+\gamma)}} \lambda |B_{R_2}|^{\frac{{1-\theta}}{n}} \bigg(\Psi_\lambda+ \int_{B_{R_2}}(\psi(|f|) +1 )\, \d x\bigg)
    +
    c|h|^{{\theta(1+\gamma)}} \lambda\Psi_\lambda^{\frac{{1-\theta}}{n}}\bigg(\Psi_\lambda+ \int_{B_{R_2}}(\psi(|f|) +1) \, \d x\bigg).
\end{align}
Starting from \eqref{july47} instead of \eqref{bound-RHS}, the same argument which yields
\eqref{bound-RHS-2}, with $\varepsilon =1$, entails that
\begin{align}
    \label{july48}
     |\mathrm{I}_5| & \leq
     c|h|^{{\theta(1+\gamma)}}\bigg[ \Psi_1+\bigg(\int_{B_{R_2}}(\psi(|f|)+1)\,\dx\bigg)^{1+\frac{({1-\theta}) {i_\phi'}}{n}}+  \bigg(\int_{B_{R_2}}(\psi(|f|)+1)\,\dx\bigg)^{1+\frac{({1-\theta}) {s_\phi'}}{n}}\bigg].
\end{align}
On the other hand,  thanks to \eqref{Bogovski-gradient} 
and \eqref{july50} one has that
\begin{align}
    \label{july49}
    \Psi_1& = \int_{B_{R_2}}\phi\bigg(\frac{|\nabla w|}{|h|^{{\theta\gamma}}}\bigg)\, \d x=
\int_{B_{R_2}}\phi\bigg(\frac{|\nabla \mathcal Bg|}{|h|^{{\theta\gamma}}}\bigg)\, \d x \leq c\int_{B_{R_2}}\phi\bigg(\frac{|g|}{|h|^{{\theta\gamma}}}\bigg)\, \d x
\\ \nonumber & = c \int_{B_{R_2}}\phi\bigg(\frac{|2\zeta\nabla\zeta\cdot\tau_hu|}{|h|^{{\theta\gamma}}}\bigg)\, \d x
\leq  c\int_{B_{r_2}}  \phi\Big(\frac{|\nabla\zeta|\,|\tau_hu|}{|h|^{{\theta\gamma}}}\Big)\dx\\\nonumber
&\le
{c\left(\frac{1}{R^{\gamma s_\phi}}+\frac{1}{R^{\gamma i_\phi}}\right)\frac{R^{s_\phi}}{(r_2-r_1)^{s_\phi}}}\int_{B_{R_2}} \phi(|\nabla u|)\,\dx.
\end{align} 
The inequalities \eqref{july48} and \eqref{july49} ensure that
\begin{align}
    \label{july51}
     |\mathrm{I}_5|  \leq
     |h|^{{\theta(1+\gamma)}}&\bigg[ {c\left(\frac{1}{R^{\gamma s_\phi}}+\frac{1}{R^{\gamma i_\phi}}\right)\frac{R^{s_\phi}}{(r_2-r_1)^{s_\phi}}}\int_{B_{R_2}} \phi(|\nabla u|)\,\dx
\\ \nonumber & \quad +\bigg(\int_{B_{R_2}}(\psi(|f|)+1)\,\dx\bigg)^{1+\frac{({1-\theta}) {i_\phi'}}{n}}+  \bigg(\int_{B_{R_2}}(\psi(|f|)+1)\,\dx\bigg)^{1+\frac{({1-\theta}){s_\phi'}}{n}}\bigg].
\end{align}
 Combining Equations \eqref{def-I-1-5}, \eqref{lower-bound}, \eqref{I-1-bound},
  \eqref{est-I2}, \eqref{est-I3}, \eqref{est-I4}   with $\varepsilon =\kappa$, and \eqref{july51}
  yields:
  \begin{align}\label{july53}
    \int_{B_{r_1}}&|\tau_h V(\E u)|^2\,\dx
    \le
      c\kappa\int_{B_{r_2}}
      |\tau_hV(\E u)|^2\, \dx
      +
     {
     c \kappa |h|^{2\sigma} \left(\sup_{{|h|\leq h_0}}\int_{B_{R_2}}\frac{|\tau_hV(\E u)|^2}{|h|^{2\sigma}}\,\dx\right)}\\\nonumber 
  & \quad +  \frac{c_{\kappa}|h|^{2\sigma}}{(r_2-r_1)^{ {2\sigma}}}
  \left(1+ {R^{\sigma {m_\sigma}-n}}\int_{B_{R}}k_\sigma^{{m_\sigma}}\,\dx\right)^{ {\frac{2}{{m_\sigma}-\frac{n}{\beta}}}}\int_{B_{R}}\phi(|\nabla u|)\,\dx\\  \nonumber
    &\quad
      +
      c_\kappa|h|^{\theta\min\{{s_\phi'},2\}}
      \Bigg[\left(  {1+}{\left(\frac{1}{R^{\gamma s_\phi}}+\frac{1}{R^{\gamma i_\phi}}\right)\frac{R^{s_\phi}}{(r_2-r_1)^{s_\phi}}}
      +
      \frac{1}{(r_2-r_1)^{i_\phi'}}
      +
      \frac{1}{(r_2-r_1)^{s_\phi'}}
      \right)
       \int_{B_{R}} \phi(|\nabla u|)\,\dx
       \\ \nonumber & \qquad \qquad \ \qquad \qquad \qquad+ \bigg(\int_{B_{R}}(\psi(|f|)+1)\,\dx\bigg)^{1+\frac{({1-\theta}) i_\phi'}{n}}+\bigg(\int_{B_{R}}(\psi(|f|)+1)\,\dx\bigg)^{1+\frac{({1-\theta}) s_\phi'}{n}}\Bigg],\\\nonumber
  \end{align}
for  $\kappa\in(0,1]$,   $r_1,r_2$ with   $R_1\le r_1<r_2\le \frac12(R_1+R_2)
\leq R$, and $|h|\le h_0$.
 Now, 
  choose $\kappa=\frac1{2c}$  and apply the iteration Lemma~\ref{lem:Giaq} to obtain:
  \begin{align}\label{est-tau-h-V-1}
    \int_{B_{R_1}}&|\tau_h V(\E u)|^2\,\dx
    \le      
     {
     \frac 12 |h|^{2\sigma} \left(\sup_{{|h|\leq h_0}}\int_{B_{R_2}}\frac{|\tau_hV(\E u)|^2}{|h|^{2\sigma}}\,\dx\right)}\\\nonumber 
  & \quad +  \frac{c|h|^{2\sigma}}{(R_2-R_1)^{ {2\sigma}}}
  \left(1+ {R^{\sigma {m_\sigma}-n}}\int_{B_{R}}k_\sigma^{{m_\sigma}}\,\dx\right)^{{\frac{2}{{m_\sigma}-\frac{n}{\beta}}}}\int_{B_{R}}\phi(|\nabla u|)\,\dx\\  \nonumber
    &\quad
      +
      c|h|^{\theta\min\{{s_\phi'},2\}}
      \Bigg[\left(1+ {\Big(\frac{1}{R^{\gamma s_\phi}}+\frac{1}{R^{\gamma i_\phi}}}\Big)\frac{R^{s_\phi}}{(R_2-R_1)^{s_\phi}}
      +
      \frac{1}{(R_2-R_1)^{i_\phi'}}
      {
      +\frac{1}{(R_2-R_1)^{s_\phi'}}
      }\right)
       \int_{B_{R}} \phi(|\nabla u|)\,\dx
       \\ \nonumber & \qquad \qquad \ \qquad \qquad \qquad 
       +
       \bigg(\int_{B_{R}}(\psi(|f|)+1)\,\dx\bigg)^{1+\frac{({1-\theta}) i_\phi'}{n}}
       +
       {
       \bigg(\int_{B_{R}}(\psi(|f|)+1)\,\dx\bigg)^{1+\frac{({1-\theta}) s_\phi'}{n}}
       }\Bigg]\\\nonumber
  \end{align}
  for  {  $0< R_1<R_2\le R$}
   and   $|h|\le h_0$.  
 Dividing through by $|h|^{2\sigma}$ in the inequality \eqref{est-tau-h-V-1}, taking the supremum  over $h\in (0, h_0]$, { and using the fact that $h_0\leq 1$} tell us that
\begin{align}\label{est-tau-h-V-2}
 \sup_{0<|h|\leq h_0}&\int_{B_{R_1}}\frac{|\tau_hV(\E u)|^2}{|h|^{2\sigma}}\,\dx
    \le      
     {
     \frac 12  \left(\sup_{0<|h|\leq h_0}\int_{B_{R_2}}\frac{|\tau_hV(\E u)|^2}{|h|^{2\sigma}}\,\dx\right)}\\\nonumber 
  & \quad +  \frac{c}{(R_2-R_1)^{ {2\sigma}}}\left(1+ {R^{\sigma {m_\sigma}-n}}\int_{B_{R}}k_\sigma^{{m_\sigma}}\,\dx\right)^{\frac{2}{m_\sigma-\frac{n}{\beta}}}\int_{B_{R}}\phi(|\nabla u|)\,\dx\\  \nonumber
    &\quad
      +
      c 
      \Bigg[\left(1+\Big( {\frac{1}{R^{\gamma s_\phi}}+\frac{1}{R^{\gamma i_\phi}}}\Big)\frac{R^{s\phi}}{(R_2-R_1)^{s_\phi}}
      +
      \frac{1}{(R_2-R_1)^{i_\phi'}}
      +
      \frac{1}{(R_2-R_1)^{s_\phi'}}
      \right)
       \int_{B_{R}} \phi(|\nabla u|)\,\dx
       \\ \nonumber & \qquad \qquad + \bigg(\int_{B_{R}}(\psi(|f|)+1)\,\dx\bigg)^{1+\frac{({1-\theta}) i_\phi'}{n}}
       +
       {
       \bigg(\int_{B_{R}}(\psi(|f|)+1)\,\dx\bigg)^{1+\frac{({1-\theta}) s_\phi'}{n}}
       }
       \Bigg].
  \end{align}
 The inequality \eqref{est-tau-h-V-2} continues to hold if the supremum over $h$ is extended to the interval $(0,{\frac{R}{2}}]$. 
Indeed, 
if $h_0<|h|\le\frac{R}{2}$, then
from \eqref{est-translation}, the very definition of the function $V$, and \eqref{indices1} we deduce that
\begin{align}\label{est:large-h}
  \int_{B_{R_1}}\frac{|\tau_hV(\E u)|^2}{|h|^{2\sigma}}\,\dx
  \le
  \frac{c}{h_0^{2\sigma}}\int_{B_{\frac{3}{2}R}}|V(\E u)|^2\,\dx
  \le
  c'\bigg(\frac{1}{(R_2-R_1)^{2\sigma}}+1\bigg)\int_{B_{\frac{3}{2}R}}\phi(|\E u|)\,\dx. 
\end{align}
Consequently,
\begin{align}\label{est-tau-h-V-2''}
 \sup_{0<|h|\leq {\frac{R}{2}}}\int_{B_{R_1}}\frac{|\tau_hV(\E u)|^2}{|h|^{2\sigma}}\,\dx
    &\le      
     {
     \frac 12  \left(\sup_{0<|h|\leq \frac{R}{2}}\int_{B_{R_2}}\frac{|\tau_hV(\E u)|^2}{|h|^{2\sigma}}\,\dx\right)}\\\nonumber 
  & \quad +  {\frac{c  }{(R_2-R_1)^{ {2\sigma}}}\left(1+ {R^{\sigma {m_\sigma}-n}}\int_{B_{R}}k_\sigma^{{m_\sigma}}\,\dx\right)^{{\frac{2}{{m_\sigma}-\frac{n}{\beta}}}}\int_{B_{{\frac{3}{2}R}}}\phi(|\nabla u|)\,\dx}\\  \nonumber
    &\quad
      +
      c 
      \Bigg[\left(1+\Big({\frac{1}{R^{\gamma s_\phi}}+\frac{1}{R^{\gamma i_\phi}}}\Big)\frac{R^{s_\phi}}{(R_2-R_1)^{s_\phi}}
      +
      \frac{1}{(R_2-R_1)^{i_\phi'}}
      +
      {
      \frac{1}{(R_2-R_1)^{s_\phi'}}
      }\right)
       \int_{B_{\frac{3}{2}R}} \phi(|\nabla u|)\,\dx
       \\ \nonumber & \quad+ \bigg(\int_{B_{R}}(\psi(|f|)+1)\,\dx\bigg)^{1+\frac{({1-\theta}) i_\phi'}{n}}
       +
       {
       \bigg(\int_{B_{R}}(\psi(|f|)+1)\,\dx\bigg)^{1+\frac{({1-\theta}) s_\phi'}{n}}
       }\Bigg].\\\nonumber
  \end{align}
 Hence, an application of the iteration Lemma~\ref{lem:Giaq} again and the  inequality \eqref{oct3} enable us to infer that
  \begin{align}\label{est-tau-h-V-3}
    \sup_{0<|h|\leq  {\frac{1}{2}R}}\int_{B_{ {\frac 34R}}}\frac{|\tau_hV(\E u)|^2}{|h|^{2\sigma}}\,\dx
    &\le \frac{c }{R^{ {2\sigma}}}\left(1+ {R^{\sigma {m_\sigma}-n}}\int_{B_{ {R}}}k_\sigma^{{m_\sigma}}\,\dx\right)^{{\frac{2}{{m_\sigma}-\frac{n}{\beta}}}} \int_{B_{\frac{3}{2}R}}\phi(|\nabla u|)\,\dx\\\nonumber 
      &\quad+
  c
    \Bigg[\left({1+}{\frac{1}{R^{\gamma s_\phi}}+\frac{1}{R^{\gamma i_\phi}}}+\frac{1}{R^{i_\phi'}}+
    \frac{1}{R^{s_\phi'}}
    \right)\int_{B_{\frac{3}{2}R}}\phi(|\nabla u|)\,\dx
    \\\nonumber
    &\quad+{\bigg(\int_{B_R}\psi(|f|)+1\,\dx\bigg)^{1+\frac{({1-\theta}) i_\phi'}{n}}
    +
    \bigg(\int_{B_{R}}(\psi(|f|)+1)\,\dx\bigg)^{1+\frac{({1-\theta}) s_\phi'}{n}}
       }\Bigg] \nonumber
       \\  \nonumber & \le {c_R}
    \left(1+
       \bigg(\int_{B_{ \frac 54 R}}k^{{m}}(x)\,\dx\bigg)^{{\frac{2m_\sigma}{{m(m_\sigma}-\frac{n}{\beta})}}}  \right)  \int_{B_{\frac{3}{2}R}}\phi(|\nabla u|)\,\dx\\\nonumber 
      &\quad+
   c
{\bigg(\int_{B_R}(\psi(|f|)+1)\,\dx\bigg)^{1+\frac{({1-\theta}) i_\phi'}{n}}
+c\bigg(\int_{B_{R}}(\psi(|f|)+1)\,\dx\bigg)^{1+\frac{({1-\theta}) s_\phi'}{n}}
       }.
  \end{align}
  A chain analogous to \eqref{est:large-h} then shows that the inequality \eqref{est-tau-h-V-3} also holds if the supremum on the leftmost side is extended to $0<|h|\leq \frac 34R$. Namely,
  \begin{align}\label{est-tau-h-V-4}
    \sup_{0<|h|\leq \frac 34R}\int_{B_{ {\frac 34R}}}\frac{|\tau_hV(\E u)|^2}{|h|^{2\sigma}}\,\dx
     & \le {c_R}
    \left(1+
       \bigg(\int_{B_{ {{\frac 54 R}}}}k^{{m}}(x)\,\dx\bigg)^{{\frac{2m_\sigma}{{m(m_\sigma}-\frac{n}{\beta})}}}  \right)  \int_{B_{\frac{3}{2}R}}\phi(|\nabla u|)\,\dx\\\nonumber 
      &\quad+
   c
{\bigg(\int_{B_R}(\psi(|f|)+1)\,\dx\bigg)^{1+\frac{({1-\theta}) i_\phi'}{n}}
+c\bigg(\int_{B_{R}}(\psi(|f|)+1)\,\dx\bigg)^{1+\frac{({1-\theta}) s_\phi'}{n}}
       }.
  \end{align}
  \end{proof}
 
  \section{Proof of Theorem \ref{main}, Part (i), completed}\label{sec:proof2}
With the a priori estimates established in the previous section at our disposal, we are now in a position to accomplish the proof of Part (i) of Theorem \ref{main}, via a regularization argument.

\begin{proof}[Proof of Theorem \ref{main}, Part (i), completed]
Let $\{\varrho_\ell\}$ be a standard family of smooth mollifiers in $\rn$, for $\ell\in \mathbb N$. Define the  functions $a_\ell : \Omega\times \mathbb R^{n\times n}_{\rm sym} \to \mathbb R^{n\times n}_{\rm sym}$ as 
$${a}_\ell(x,P)=\int_{\Omega}\varrho_\ell(x-y)a(y,P)\,\d y \quad \text{for $(x,P)\in \Omega\times \mathbb R^{n\times n}_{\rm sym}$,}$$
and $f_\ell : \Omega \to \rn$ as
$${f}_\ell(x)=\int_{\Omega}\varrho_\ell(x-y)f(y)\,\d y \quad \text{for $x\in \Omega$.}$$
Let $u \in W^{1,\phi}_{\rm loc}(\Omega, \real^n)$ be a local weak solution to the system \eqref{equa}. Given a ball $B_R$ such that $B_{\frac 32R} \Subset \Omega$,  let $u_\ell \in u+ W^{1,\phi}_0(B_{\frac32 R}, \real^n)$ be such that $\dive u_\ell =0$ and
  \begin{equation}
    \label{equa-div-free-eps}
    \int_{B_{\frac{3}{2}R}}  a_\ell(x,\E u_\ell) \cdot \E\varphi \,\dx
    =
    \int_{B_{\frac{3}{2}R}} f_\ell\cdot\varphi\,\dx
  \end{equation}
  for every $\varphi\in W^{1,\phi}_0(B_{\frac{3}{2}R},\R^n)$ such that $\dive\varphi=0$.   The existence  of such a function $u_\ell$ can be established via a classical theorem  on pseudomonotone operators -- see e.g. \cite[Theorem 27.A]{zeidler}. Some details on its application in the case when $f_\ell=0$ can be found in \cite{DiKa}. 
	\\   Notice that the assumptions \eqref{ip1}--\eqref{ip2b} on the function $a$ are transferred into parallel properties on each function $a_\ell$, namely:
	   \begin{equation}\label{B1}
	( a_\ell(x,P)-a_\ell(x,Q))\cdot (P-Q) \ge \nu\phi''(|P|+|Q|)|P-Q|^2
\end{equation}
	   
	  \begin{equation}\label{B2}
	| a_\ell(x,P)-a_\ell(x,Q)|\le L \phi''(|P|+|Q|)|P-Q|
\end{equation}

\begin{equation}\label{B2b}
	| a_\ell(x,P)|\le L \phi'(|P|)
\end{equation}
for a.e. $x\in B_{\frac{3}{2}R}$ and every $P, Q \in \mathbb R^{n\times n}_{\rm sym}$. In the other hand, the hypothesis
\eqref{ipholder} turns into:
\begin{equation}\label{B3}
	| a_\ell(x,P)-a_\ell(y,P)|\le |x-y|^{{\alpha}}(k_\ell(x)+k_\ell(y))\phi'(|P|)
\end{equation}
for a.e. $x, y\in B_{\frac{3}{2}R}$ and
every $P \in \mathbb R^{n\times n}_{\rm sym}$,
	 where	 the function $k_\ell : \Omega \to [0, \infty)$ is given by
     $${k}_\ell(x)=\int_{\Omega}\varrho_\ell(x-y)k(y)\,\d y \quad \text{for $x\in \Omega$.}$$
Since  $k_\ell \in L^\infty_{\rm loc} (\Omega)$ and $f_\ell \in   L^\infty_{\rm loc} (\Omega)$, the estimate \eqref{est-tau-h-V-4} can be applied with $u$, $k$ and $f$ replaced with $u_\ell$, $k_\ell$ and $f_\ell$. This results in:
\begin{align}\label{est-tau-h-Veps}
 \sup_{0<|h|\leq  {
\frac34R}}\int_{B_{ {\frac 34R}}}\frac{|\tau_hV(\E u_\ell)|^2}{|h|^{2\sigma}}\,\dx
     & \le {c_R}
    \bigg(1+
       \bigg(\int_{B_{ {{\frac 54 R}}}}{k}_\ell^{{m}}(x)\,\dx\bigg)^{{\frac{2m_\sigma}{{m(m_\sigma}-\frac{n}{\beta})}}}  \bigg)  \int_{B_{\frac32R}}\phi(|\nabla u_\ell|)\,\dx\\\nonumber 
      &\quad+
   c
{\bigg(\int_{B_R}(\psi(|f_\ell|)+1)\,\dx\bigg)^{1+\frac{({1-\theta}) i_\phi'}{n}}
+c\bigg(\int_{B_{R}}(\psi(|f_\ell|)+1)\,\dx\bigg)^{1+\frac{({1-\theta}) s_\phi'}{n}}
       }.
  \end{align}
Observe that, in principle, Equation \eqref{est-tau-h-V-4} can only be applied with $R$ replaced with a strictly smaller number.   The inequality \eqref{est-tau-h-Veps} then follows thanks to the monotone convergence theorem.
By Jensen's inequality, 
\begin{align}
    \label{july66}
    \int_{B_{ {{\frac 54R}}}}k^{m}_\ell\,\dx \leq \int_{B_{ {\frac 32R}}}k^{m}\,\dx \quad \text{and} \quad  \int_{B_{ {{R}}}}\psi(|f_\ell|) \,\dx \leq \int_{B_{\frac 32R }}\psi(|f|) \,\dx,
\end{align}
for sufficiently large $\ell\in\N$.
Thus, from \eqref{est-tau-h-Veps} we obtain:
{\begin{align}\label{est-tau-h-Veps'}
 \sup_{0<|h|\leq \frac 34R}\int_{B_{ {\frac 34R}}}\frac{|\tau_hV(\E u_\ell)|^2}{|h|^{2\sigma}}\,\dx
     & \le {c_R}
    \bigg(1+
       \bigg(\int_{B_{ \frac 32 R}}{k}^{{m}}(x)\,\dx\bigg)^{{\frac{2m_\sigma}{{m(m_\sigma}-\frac{n}{\beta})}}}  \bigg)  \int_{B_{\frac32R}}\phi(|\nabla u_\ell|)\,\dx\\\nonumber 
      &\quad+
   c
{\bigg(\int_{B_{\frac32 R}}\psi(|f|)+1\,\dx\bigg)^{1+\frac{({1-\theta}) i_\phi'}{n}}
+c\bigg(\int_{B_{\frac 32R}}\psi(|f|)+1\,\dx\bigg)^{1+\frac{({1-\theta}) s_\phi'}{n}}
       }.
  \end{align}}
Our purpose is now to prove that 
\begin{align}
    \label{july70}
\lim_{\ell\to \infty}\int_{B_{\frac32R}}\phi(|\E u_\ell-\E u|)\,\dx=0.
\end{align}
The properties of $u_\ell$ legitimate the  use of the test function $u_\ell-u$ in equations \eqref{equa-div-free-eps} and \eqref{equa}.  
By the assumption \eqref{B1}, we obtain that
\begin{align}\label{I1+I2}
	\nu\int_{B_{\frac{3}{2}R}}&|\E u_\ell-\E u|^2\phi''(|\E u_\ell|+|\E u |)\,\d x\le \int_{B_{\frac{3}{2}R}}( a_\ell(x,\E u_\ell)-a_\ell(x,\E u))\cdot (\E u_\ell-\E u)\, \d x \\
	 \nonumber &=\int_{B_{\frac{3}{2}R}}( a_\ell(x,\E u_\ell)-a(x,\E u)) \cdot (\E u_\ell-\E u)\, \d x+\int_{B_{\frac{3}{2}R}}( a(x,\E u)-a_\ell(x,\E u))\cdot (\E u_\ell-\E u)\, \d x\\
	 \nonumber &=\int_{B_{\frac{3}{2}R}} (f_\ell-f)\cdot ( u_\ell- u)\, \d x +\int_{B_{\frac{3}{2}R}} (a(x,\E u)-a_\ell(x,\E u))\cdot (\E u_\ell-\E u)\, \d x \\  
	&  \nonumber \le \int_{B_{\frac{3}{2}R}}| a(x,\E u)-a_\ell(x,\E u)|| \E u_\ell-\E u|\, \d x+\int_{B_{\frac{3}{2}R}} |f_\ell-f|\, | u_\ell- u|\, \d x = I_1^{k}+I_2^{k}.
	\end{align}
{ Thanks to \eqref{add-shift} and \eqref{shift4'}, the following chain holds:
\begin{align}
    \label{july75}
    \int_{B_{\frac{3}{2}R}}\phi (|\E u_\ell-\E u|)\, \d x & \leq  c_\delta \int_{B_{\frac{3}{2}R}}\phi_{|\E u|}(|\E u_\ell-\E u|)\, \d x + \delta \int_{B_{\frac{3}{2}R}}\phi (|\E u|)\, \d x 
    \\ \nonumber & \leq 
    c_\delta \int_{B_{\frac{3}{2}R}}|\E u_\ell-\E u|^2\phi''(|\E u_\ell|+|\E u |)\,\d x + \delta \int_{B_{\frac{3}{2}R}}\phi (|\E u|)\, \d x \\ \nonumber & \leq \frac {c_\delta}\nu (I_1^{k}+I_2^{k})+\delta \int_{B_{\frac{3}{2}R}}\phi (|\E u|)\, \d x.
\end{align}
We begin by estimating $I_2^{k}$.   Given $\lambda>0$,   set
$$\Theta_\lambda=\int_{B_{\frac{3}{2}R}} \phi\left(\frac{| \nabla u_\ell- \nabla u|}{\lambda}\right)\, \dx.$$
Owing to \eqref{july62} and \eqref{june10}, there exists a constant $c>0$ such that
\begin{align}
    \label{july63}
\widetilde{ \phi_{0,n}}(2ct) \leq \psi(t) +1 \quad \text{for $t \geq 0$.}
\end{align}
 Hence,  by Young's inequality,
\begin{align}\label{J2'}
        I_2^{k} & \le \frac{\lambda}{c}(|B_{\frac{3}{2}R}|+\Theta_\lambda)^{\frac{1}{n}}
        \left[ \int_{B_{\frac{3}{2}R}}\phi_{0,n}\left(\frac {| u_\ell- u|}{\lambda (|B_{\frac{3}{2}R}|+\Theta_\lambda)^{\frac{1}{n}}}\right)\,\dx+\int_{B_{\frac{3}{2}R}} \widetilde{ \phi_{0,n}}(c|f_\ell-f|)\,\dx\right].
        \end{align} 
The Poincar\'e-Sobolev inequality \eqref{SPnew1}
and the $\Delta_2$-property of $\phi$ ensure that there exists a constant $c$ such that
\begin{equation}\label{july64}
\int_{B_{\frac{3}{2}R}} \phi_{0,n}\left(\frac {| u_\ell- u|}{\lambda (|B_{\frac{3}{2}R}|+\Theta_\lambda)^{\frac{1}{n}}}\right)\,\dx
    \le
c\int_{B_{\frac{3}{2}R}} \phi\left(\frac{| \nabla(u_\ell- u)|}{\lambda}\right)\, \dx
    =
    c\, \Theta_\lambda.
  \end{equation}
Coupling \eqref{J2'} with \eqref{july64} yields:
\begin{align}\label{J2}
        I_2^{k}   &\leq c\lambda |B_{\frac{3}{2}R}|^{\frac{1}{n}} 
\Theta_\lambda+
c\lambda |B_{\frac{3}{2}R}|^{\frac{1}{n}}
\int_{B_{\frac{3}{2}R}} \widetilde{ \phi_{0,n}}(c|f_\ell-f|)\,\dx
     +c\lambda \Theta_\lambda^{\frac{1}{n}+1}  
     +c\lambda \Theta_\lambda^{\frac{1}{n}} 
     \int_{B_{\frac{3}{2}R}}  \widetilde{ \phi_{0,n}}(c|f_\ell-f|)\,\dx 
     \\ \nonumber & =\textrm{J}+ \textrm{JJ}+\textrm{I}+\textrm{II}.
    \end{align}
Given $\varepsilon \in (0, 1)$, 
one can choose
$$\lambda =\eps^{-\frac{1}{i_\phi-1}}\max\Big\{\Theta_1^\frac{1}{i_\phi(n+1)-n},\Theta_1^\frac{1}{{s_\phi}(n+1)-n}\Big\},$$
and
bound the quantities $\mathrm
{J}$, $\mathrm{I}$ and $\mathrm{II}$  similarly to the  parallel quantities appearing in \eqref{bound-RHS} with $\theta=0$. The quantity $\mathrm
{JJ}$ can be estimated as follows:
\begin{align}
    \label{oct11}
  \mathrm
{JJ}& =   c\lambda |B_{\frac{3}{2}R}|^{\frac{1}{n}}
\int_{B_{\frac{3}{2}R}} \widetilde{\phi_{0,n}}(c|f_\ell-f|)\,\dx \leq c
|B_{\frac{3}{2}R}|^{\frac{1}{n}}\eps^{-\frac{1}{{i_\phi}-1}}\Big(\Theta_1^\frac{1}{{i_\phi}(n+1)-n} +\Theta_1^\frac{1}{{s_\phi}(n+1)-n}\Big)\int_{B_{\frac{3}{2}R}} \widetilde{\phi_{0,n}}(c|f_\ell-f|)\,\dx 
\\ \nonumber &
\leq c \varepsilon \Theta_1 + c_\varepsilon \bigg(|B_{\frac{3}{2}R}|^{\frac 1n}\int_{B_{\frac{3}{2}R}} \widetilde{\phi_{0,n}}(c|f_\ell-f|)\,\dx\bigg)^{(1+\frac{{i_\phi'}}{n}) {\frac{n}{n+1}}} + c_\varepsilon \bigg(|B_{\frac{3}{2}R}|^{\frac 1n}\int_{B_{\frac{3}{2}R}} \widetilde{\phi_{0,n}}(c|f_\ell-f|)\,\dx\bigg)^{(1+\frac{ {s_\phi'}}{n}) {\frac{n}{n+1}}}\\\nonumber
& {
\le
c\varepsilon\Theta_1
+
\varepsilon \Big(|B_{\frac{3}{2}R}|^{1+\frac{i_\phi'}{n}}
+|B_{\frac{3}{2}R}|^{1+\frac{s_\phi'}{n}}\Big)
+
c_\eps' \bigg(\int_{B_{\frac{3}{2}R}} \widetilde{\phi_{0,n}}(c|f_\ell-f|)\,\dx\bigg)^{1+\frac{{i_\phi'}}{n}}
+
c_\eps '\bigg(\int_{B_{\frac{3}{2}R}} \widetilde{\phi_{0,n}}(c|f_\ell-f|)\,\dx\bigg)^{1+\frac{{i_\phi'}}{n}}
}.
\end{align} 
So doing, and making also use of the Korn inequality \eqref{korn-0} to bound $\Theta_1$ by 
$\int_{B_{\frac{3}{2}R}}\phi (|\E  u_\ell-\E u|)\, \d x $ times a constant, one can conclude that
\begin{align}\label{july77}
    |I_2^k|
    & \le
      c\eps \int_{B_{\frac{3}{2}R}}\phi (|\E  u_\ell-\E u|)\, \d x
     {
     +c\eps\big(|B_{\frac{3}{2}R}|^{1+\frac{{i_\phi'}}{n}}+|B_{\frac{3}{2}R}|^{1+\frac{{s_\phi'}}{n}}\big)
     }
     \\ \nonumber & \quad\quad 
     + 
     c_\eps\bigg( \int_{B_{\frac{3}{2}R}}\widetilde{\phi_{0,n}}(c|f_\ell-f|)\,\dx\bigg)^{1+\frac{ {i_\phi'}}{n}}
     +
     c_\eps\bigg( \int_{B_{\frac{3}{2}R}} \widetilde{\phi_{0,n}}(c|f_\ell-f|)\,\dx\bigg)^{1+\frac{{s_\phi'}}{n}}.
     \end{align}
     Choosing $\varepsilon \leq \frac{\nu}{2cc_\delta}$  and coupling \eqref{july75} with \eqref{july77} yield:
\begin{align}
    \label{july78}
     \int_{B_{\frac{3}{2}R}}&\phi (|\E u_\ell-\E u|)\, \d x  \leq \frac{2c_\delta}{\nu}I_1^k+
     2\delta \int_{B_{\frac{3}{2}R}}\phi (|\E u|)\, \d x
    {
     +c\eps\big(|B_{\frac{3}{2}R}|^{1+\frac{{i_\phi'}}{n}}+|B_{\frac{3}{2}R}|^{1+\frac{ {s_\phi'}}{n}}\big)
     }
     \\ \nonumber & \quad + 
     c_\eps\bigg( \int_{B_{\frac{3}{2}R}}\widetilde{\phi_{0,n}}(c|f_\varepsilon-f|)\,\dx\bigg)^{1+\frac{ {i_\phi'}}{n}}
     +
     c_\eps\bigg( \int_{B_{\frac{3}{2}R}} \widetilde{\phi_{0,n}}(c|f_\ell-f|)\,\dx\bigg)^{1+\frac{{s_\phi'}}{n}}.
\end{align}
Next, we have:
\begin{align}
    \label{july86}
   I_1^k & = \int_{B_{\frac{3}{2}R}}|a_\ell(x,\E u)- a(x,\E u)|| \E u_\ell-\E u|\, \d x 
   \\ \nonumber & \leq c_\kappa \int_{B_{\frac{3}{2}R}}\widetilde \phi (|a_\ell(x,\E u)- a(x,\E u)|)\, \dx + \kappa\int_{B_{\frac{3}{2}R}}\phi(| \E u_\ell-\E u|)\, \d x.
\end{align}
We now choose $\kappa$ small enough in
\eqref{july86} and combine the resultant inequality with \eqref{july78}, to obtain:
\begin{align}
    \label{july87}
     \int_{B_{\frac{3}{2}R}}\phi (|\E u_\ell-\E u|)\, \d x &  \leq 
    c \delta \int_{B_{\frac{3}{2}R}}\phi (|\E u|)\, \d x 
   {
     +c\eps\big(|B_{\frac{3}{2}R}|^{1+\frac{{i_\phi'}}{n}}+|B_{\frac{3}{2}R}|^{1+\frac{{s_\phi'}}{n}}\big)
     }
     + 
      c_\eps\bigg( \int_{B_{\frac{3}{2}R}}\widetilde{\phi_{0,n}}(c|f_\varepsilon-f|)\,\dx\bigg)^{1+\frac{ {i_\phi'}}{n}}
       \\ \nonumber & \quad
     +
     c_\eps\bigg( \int_{B_{\frac{3}{2}R}} \widetilde{\phi_{0,n}}(c|f_\ell-f|)\,\dx\bigg)^{1+\frac{{s_\phi'}}{n}} +
c_\kappa\int_{B_{\frac{3}{2}R}}\widetilde \phi (|a_\ell(x,\E u)- a(x,\E u)|)\, \dx.
\end{align}
As
$$a_\ell(x,\E u)\to a(x,\E u)\qquad\quad \text{a.e. in}\,\,  B_{\frac{3}{2}R},$$
and, by \eqref{B2b} and \eqref{ip2b},
\begin{align}
    \label{july79}
    \widetilde \phi(|a_\ell(x,\E u)- a(x,\E u)|)& \leq c  \widetilde\phi(\phi'(|\E u|))   \leq c'
      \phi(|\E u|) \quad \text{a.e. in $B_{\frac{3}{2}R}$,}
\end{align}
one infers from the dominated convergence theorem that
\begin{align}
    \label{july90}
\lim_{\ell\to \infty}\int_{B_{\frac{3}{2}R}}\widetilde \phi (|a_\ell(x,\E u)- a(x,\E u)|)\, \dx =0.
\end{align}
Moreover, thanks to \cite[Lemma 5]{Gossez-studia},
\begin{align}
    \label{july83}
   \lim _{\ell\to \infty}
    \int_{B_{\frac{3}{2}R}}\widetilde{\phi_{0,n}}(c|f_\ell-f|)\,\dx =0.
\end{align}
Equations \eqref{july87}, \eqref{july90}, and \eqref{july83} ensure that
\begin{align}
    \label{july80}
    \limsup_{\ell\to \infty}\int_{B_{\frac{3}{2}R}}\phi (|\E u_\ell-\E u|)\, \d x & \leq 
     c\delta \int_{B_{\frac{3}{2}R}}\phi (|\E u|)\, \d x
   {
     +c\eps\big(|B_{\frac{3}{2}R}|^{1+\frac{ {i_\phi'}}{n}}+|B_{\frac{3}{2}R}|^{1+\frac{ {s_\phi'}}{n}}\big)
     }.
\end{align}
Hence, Equation \eqref{july70} follows, owing to the arbitrariness of $\varepsilon$ and $\delta$.}
\\ From \eqref{july70} and the Korn inequality \eqref{korn-0} we deduce that there exists a subsequence of $\{u_\ell\}$, still denoted by $\{u_\ell\}$, and a function $H\in L^1(B_{\frac{3}{2}R})$
such that
\begin{align}
    \label{aug1}
    \E u_\ell \to \E u \quad \text{and }\quad \nabla u_\ell \to \nabla u \quad \text{a.e. in $B_{\frac{3}{2}R}$,}
\end{align}
and 
\begin{align}
    \label{aug1'}
    \phi(|\nabla u_\ell-\nabla u|) \leq H \quad \text{a.e. in $B_{\frac{3}{2}R}$.}
\end{align}
Thus,
\begin{align}
    \label{aug3}
    \phi(|\nabla u_\ell|) \leq c( \phi(|\nabla u_\ell- \nabla u|) + \phi(|\nabla u|) ) \leq    c( H + \phi(|\nabla u|)),
\end{align}
whence, by dominated convergence,
\begin{align}
    \label{aug4}
    \lim_{\ell\to \infty}\int_{B_{\frac{3}{2}R}}\phi(|\nabla u_\ell|) \, \d x = \int_{B_{\frac{3}{2}R}}\phi(|\nabla u|) \, \d x.
\end{align}
Consequently, from \eqref{est-tau-h-Veps'}, via Fatou's lemma (applied for each fixed $h$) and \eqref{aug4}, one infers that
\begin{align}\label{aug5}
\sup_{0<|h|\leq \frac 34R}\int_{B_{ {\frac 34R}}}\frac{|\tau_hV(\E u)|^2}{|h|^{2\sigma}}\,\dx
     & \le {c_R}
    \bigg(1+
       \bigg(\int_{B_{ \frac 32 R}}{k}^{{m}}\,\dx\bigg)^{{\frac{2m_\sigma}{{m(m_\sigma}-\frac{n}{\beta})}}}  \bigg)  \int_{B_{\frac{3}{2}R}}\phi(|\nabla u|)\,\dx\\\nonumber 
      &\quad+
   c
{\bigg(\int_{B_{\frac32 R}}\psi(|f|)+1\,\dx\bigg)^{1+\frac{({1-\theta}) i_\phi'}{n}}
+c\bigg(\int_{B_{\frac 32R}}\psi(|f|)+1\,\dx\bigg)^{1+\frac{({1-\theta}) s_\phi'}{n}}
       }.
   \end{align}
 The inequality \eqref{oct109} follows by    replacing $R$ with $\frac 43R$ in \eqref{aug5}.  
 \end{proof}

\section{Proof of Theorem \ref{main}, Part (ii),  regularity of $\pi$.}\label{sec:proof3}

We conclude with a proof of Part (ii) of Theorem \ref{main}. As hinted in Section \ref{sec:intro}, the fractional regularity of $\pi$ is derived via the first equation in the system \eqref{equa}, thanks to the information on the fractional regularity of $V(\E u)$ already established in Part (i).

\begin{proof}[Proof of Theorem \ref{main}, Part (ii)] 
   As in the previous proofs, in what follows $c$, $c'$, $c''$ will denote positive constants which  may change from one Equation to another one, and may depend on $n, \nu, L, \phi, \psi$.
  The dependence on additional parameters will be explicitly indicated via an index in the relevant constant.
\\
Let $B_R$ be any ball such that $B_{\frac 32R} \Subset \Omega$.
 Given $\varphi\in L^{\phi}(B_{\frac R2}) $,
 an application   of Bogovski\u\i's Lemma  to the function $\varphi-\varphi_{B_{\frac R2}}$ provides us with a function $w\in W^{1,\phi}_0(B_{\frac R2})$ such that
     \begin{equation}\label{Bogw}
         \begin{cases}
             \mathrm{div}\,w=\varphi-\varphi_{B_{\frac R2}}\cr 
        \displaystyle\int_{B_{\frac R2}}\phi(|\nabla w|)\,\dx\le c \int_{B_{\frac R2}}\phi(|\varphi-\varphi_{B_{\frac R2}}|)\,\dx.
         \end{cases}
     \end{equation}
Let  $\zeta\in
C^\infty_0(\rn)$ be cut-off function such that  $0\leq \zeta \le 1$, with $\zeta =1$ in $B_{\frac R8}$,  ${\rm spt} \zeta \subset B_{\frac R4}$ and
$|\nabla\zeta|\le\frac cR$.
Let $h\in \mathbb R$ be such that $0<|h|<  \min\{\frac R4, 1\}$. The following chain holds:  
\begin{align}\label{eta-pi-dual}
  \int_{B_{{\frac 34R}}}\tau_h(\zeta\pi)\varphi\,\dx
 & =
  \int_{B_{\frac R4}}\zeta\pi\tau_{-h}\varphi\,\dx
  =
  \int_{B_{\frac R4}}\zeta\pi\tau_{-h}\big[\varphi-\varphi_{B_{\frac R2}}\big]\,\dx =
  \int_{B_{\frac R4}}\zeta\pi\dive(\tau_{-h}w)\,\dx\\\nonumber
  &=
  \int_{B_{\frac R4}}\pi\dive(\zeta\tau_{-h}w)\,\dx
  -
  \int_{B_{\frac R4}}\pi\nabla\zeta\cdot\tau_{-h}w\,\dx = \mathrm{I}+\mathrm{II}.
\end{align}
Now, we exploit the fact that $(u,\pi)$ is a weak solution of
system~\eqref{equa}. The function $\zeta\tau_{-h}w\in W^{1,\phi}(\Omega,\R^n)$ and is compactly supported in $B_{\frac R4}$. Using this  test function
in \eqref{weak-equa} enables one to deduce that
\begin{align}\label{pre-I-bound}
  \mathrm{I}
  &=
  -\int_{B_{\frac R4}} f\cdot \zeta \tau_{-h}w\,\dx
  +
  \int_{B_{\frac R4}}  a(x,\E u) \cdot \E(\zeta\tau_{-h}w) \,\dx \\\nonumber
  &=
  -\int_{B_{\frac R4}} f\cdot \zeta \tau_{-h}w\,\dx
  +
  \int_{B_{\frac R2}}
  [ \tau_h(\zeta a(x,\E u))] \cdot \E w  
  \,\dx
  +
  \int_{B_{\frac R4}}a(x,\E u) \cdot (\tau_{-h}w\otimes
  \nabla\zeta)\,\dx\\\nonumber
  &
  \le
  \int_{B_{\frac R4}} \zeta|f|\,|\tau_{-h}w|\,\dx
  +
  \int_{B_{\frac R2}}
  |\tau_h[a(x,\E u)]|\,|\E w| \,\dx
  +
  |h|\|\nabla\zeta\|_{L^\infty}\int_{B_{\frac R2}}
  |a(x,\E u)|\,|\E w| \,\dx
\\\nonumber
  &\qquad
  +
  \|\nabla\zeta\|_{L^\infty}\int_{B_{\frac R4}}|a(x,\E u)|\,|\tau_{-h}w|\,\dx
  = \mathrm{I}_1+\mathrm{I}_2+\mathrm{I}_3+\mathrm{I}_4.
\end{align}
Note that the inequality rests upon Equation \eqref{product-rule}.
\\
To estimate $\mathrm{I}_1$, let us set 
$$\Xi_\lambda = \int_{B_{\frac R2}}\phi\Big(\frac{|\nabla w|}{\lambda}\Big)\dx$$
for $\lambda>0$. From a chain analogous to \eqref{est-RHS-psi} and the Sobolev-Poincar\'e inequality~\eqref{SPnew1} we obtain:
\begin{align}\label{aug10}
  \mathrm{I}_1&=|h|^{\theta}\lambda(|B_{\frac R2}|+\Xi_\lambda)^{({1-\theta})/n}\int_{B_{\frac R4}} \zeta|f|\bigg(\frac{|\tau_{-h}w|}{\lambda|h|}\bigg)^{\theta}\bigg(\frac{|\tau_{-h}w|}{\lambda (|B_{\frac R2}|+\Xi_\lambda)^{1/n}}\bigg)^{1-\theta}\,\dx\\ \nonumber
  &\le   c|h|^{\theta}\lambda(|B_{\frac R2}|+\Xi_\lambda)^{({1-\theta})/n}\int_{B_{\frac R4}}\left[\phi\bigg(\frac{|\tau_{-h}w|}{\lambda|h|}\bigg)+\phi_{0,n}\bigg(\frac{|\tau_{-h}w|}{\lambda(|B_{\frac R2}|+\Xi_\lambda)^{1/n}}\bigg)+\psi(|f|)+1\right]\, \dx\\
  &\le \nonumber
    c|h|^{\theta}\lambda(|B_{\frac R2}|+\Xi_\lambda)^{({1-\theta})/n}\int_{B_{\frac R2}}\left[\phi\big(\tfrac{1}{\lambda}|\nabla w|\big)
    +\phi_{0,n}\bigg(\frac{{  2}|w|}{\lambda(|B_{\frac R2}|+\Xi_\lambda)^{1/n}}\bigg)+\psi(|f|)+1\right]\, \dx\\
 &\le \nonumber
   c|h|^{\theta}\lambda(|B_{\frac R2}|+\Xi_\lambda)^{({1-\theta})/n}\int_{B_{\frac R2}}\left[\phi\big(\tfrac{1}{\lambda}|\nabla w|\big)+\psi(|f|)+1\right]\,\dx\\
    \nonumber &=
   c|h|^{\theta}\lambda(|B_{\frac R2}|+\Xi_\lambda)^{({1-\theta})/n}\bigg(\Xi_\lambda +\int_{B_{\frac R2}}(\psi(|f|)+1)\,\dx\bigg)
\\  \nonumber & \leq 
c|h|^{\theta}\lambda|B_{\frac R2}|^{({1-\theta})/n}\bigg(\Xi_\lambda +\int_{B_{\frac R2}}(\psi(|f|)+1)\,\dx\bigg)
  + c|h|^{\theta}\Xi_\lambda^{({1-\theta})/n}\bigg(\Xi_\lambda +\int_{B_{\frac R2}}(\psi(|f|)+1)\,\dx\bigg).
\end{align}
Fix $\kappa \in (0,1]$. By starting from \eqref{aug10} instead of \eqref{bound-RHS} and arguing as in the proof of \eqref{bound-RHS-2} one can deduce that
\begin{align}
   \mathrm{I}_1
  &\le
     |h|^{\theta}\Bigg[ c\kappa\int_{B_{\frac R2}}\phi(|\nabla w|)\,\dx
   +
   c_\kappa\bigg(\int_{B_{\frac R2}}(\psi(|f|)+1)\,\dx\bigg)^{1+\frac{({1-\theta}) i_\phi'}{n}}
   +
   c_\kappa\bigg(\int_{B_{\frac R2}}(\psi(|f|)+1)\,\dx\bigg)^{1+\frac{({1-\theta}) s_\phi'}{n}}\Bigg].
 \end{align}
 Now, consider $\mathrm{I}_2$. Observe that the exponent $\varrho$ defined in the statement
is such that
$\varrho \leq \alpha$.
 Recalling \eqref{def-A-B}, \eqref{ip2} and using {Lemma \ref{lem:ipholder-beta}}, with $\beta$ replaced with $\varrho$,  tells us that
\begin{align}\label{aug14}
    \mathrm{I}_2&\le  \int_{B_{\frac R2}}
  \mathcal{A}_h\,|\E w| \,\dx+\int_{B_{\frac R2}}
  \mathcal{B}_h\,|\E w| \,\dx
  \le  c |h|^{\varrho} \int_{B_{\frac R2}} \frac{\phi''(|\E(h)|)||\tau_h\E u|}{|h|^{\varrho}}|\E w|\,\dx \\ \nonumber
& \quad +|h|^\varrho\int_{B_{\frac R2}}({k_\varrho(x)+k_\varrho(x+he_i)})\phi'(|\E u|)|\E w|\,\dx =\mathrm{I}_{2,1}+\mathrm{I}_{2,2},
\end{align}
{for some function $k_\varrho\in L^{m_\varrho}(B_R)$ with ${m_\varrho}>\frac{n}{\varrho}$.}
From \eqref{shift4}, Young's inequality, \eqref{homogeneity-phi-star-shifted} and inequality \eqref{bound-phi-star}, one can infer that
\begin{align*}
    \mathrm{I_{2,1}}&\le   c{|h|^{\varrho}}\int_{B_{\frac R2}}\frac{\phi_{|\E u|}'(|\tau_h\E u|)}{|h|^{\varrho}}\left|\E w\right|\,\dx
 \le c_\kappa{|h|^{\varrho}}\int_{B_{\frac R2}}\widetilde{\phi_{|\E u|}}\left(\frac{\phi_{|\E u|}'(|\tau_h\E u|)}{|h|^\varrho}\right)\,\dx+\kappa{|h|^{\varrho}}\int_{B_{\frac R2}}\phi_{|\E u|}\left(|\E w|\right)\,\dx
\\ \nonumber
    &\le c_\kappa \frac{{|h|^{\varrho}}}{|h|^{\varrho\max\{i_\phi',2\}}}\int_{B_{\frac R2}}\phi_{|\E u|}(|\tau_h\E u|)\,\dx+\kappa{|h|^{\varrho}}\int_{B_{\frac R2}}\phi_{|\E u|}(|\E w|)\,\dx.
\end{align*}
 Notice that $\varrho\max\{i_\phi',2\}=2\sigma$. Hence, thanks to 
\eqref{a-coercivity-split} and \eqref{eq:removal-shift2}, 
\begin{align}\label{aug12}
    \mathrm{I_{2,1}}&\le 
   c_\kappa \frac{{|h|^{\varrho}}}{|h|^{2\sigma}} \int_{B_{\frac R2}}|\tau_hV(\E u)|^2\,\dx+c\kappa{|h|^{\varrho}}\int_{B_{\frac R2}}(\phi(|\E u|)+\phi(|\E w|))\,\dx \\ \nonumber
&\le 
 c_\kappa|h|^\varrho    \sup_{0<|h|<\frac R2}
 \int_{B_{\frac R2}}\frac{|\tau_hV(\E u)|^2}{|h|^{2\sigma}}\, \d x+ c\kappa{|h|^{\varrho}}\int_{B_{\frac R2}}\phi(|\E u|)\, \d x 
 +c\kappa|h|^{\varrho}\int_{B_{\frac R2}}\phi(|\E w|)\,\d x.
\end{align}
An application of Young's  and  H\"older's inequalities, \eqref{homogeneity-phi-star},   \eqref{july59}, and \eqref{oct3} yields:
   \begin{align}\label{aug11}
  \mathrm{I}_{2,2}
  &\le
    |h|^\varrho\Bigg[\kappa\int_{B_{\frac R2}}\phi(|\E w|)\dx
    +
    c_\kappa\int_{B_{\frac R2}}\widetilde \phi\Big({(k_\varrho(x)+k_\varrho(x+he_i))}\phi'(|\E u|)\Big)\dx\Bigg]\\
  &\le \nonumber 
    |h|^\varrho\Bigg[\kappa\int_{B_{\frac R2}}\phi(|\E w|)\dx
    +
    c_\kappa\int_{B_{\frac R2}}(1+{k_\varrho(x)+k_\varrho(x+he_i)})^{i_\phi'}\widetilde \phi(\phi'(|\E u|))\dx\Bigg]\\
  &\le \nonumber 
    |h|^\varrho\Bigg[\kappa\int_{B_{\frac R2}}\phi(|\E w|)\dx
    +
    c_\kappa\bigg(\int_{B_{\frac R2}}(1+k_\varrho(x)+k_\varrho(x+he_i))^{{m_\varrho}}\dx\bigg)^{\frac{i_\phi'}{{m_\varrho}}}
\bigg(\int_{B_{\frac R2}}\big[\phi(|\E u|)\big]^{\frac{ {m_\varrho}}{{m_\varrho}-i_\phi'}}\dx\bigg)^{\frac{ {m_\varrho}-i_\phi'}{{m_\varrho}}}\Bigg]
\\  &\le \nonumber 
    |h|^\varrho\Bigg[\kappa\int_{B_{\frac R2}}\phi(|\E w|)\dx
    +
    c_\kappa
\bigg(\int_{B_{\frac R2}}\big[\phi(|\E u|)\big]^{\frac{ {m_\varrho}}{ {m_\varrho}-i_\phi'}}\dx\bigg)^{\frac{{m_\varrho}-i_\phi'}{{m_\varrho}}}
\\ \nonumber & \qquad \qquad + c_\kappa\bigg(\int_{B_{R}}k_\varrho^{{m_\varrho}}\dx\bigg)^{\frac{i_\phi'}{{m_\varrho}}}\bigg(\int_{B_{\frac R2}}\big[\phi(|\E u|)\big]^{\frac{{m_\varrho}}{{m_\varrho}-i_\phi'}}\dx\bigg)^{\frac{{m_\varrho}-i_\phi'}{ {m_\varrho}}}
\Bigg]
\\  &\le \nonumber 
    {|h|^\varrho\Bigg[\kappa\int_{B_{\frac R2}}\phi(|\E w|)\dx
    +
    c_{\kappa,R}
\bigg(\int_{B_{\frac R2}}\big[\phi(|\E u|)\big]^{\frac{ {m_\varrho}}{{m_\varrho}-i_\phi'}}\dx\bigg)^{\frac{{m_\varrho}-i_\phi'}{ {m_\varrho}}}}
\\ \nonumber & {\qquad \qquad + c_{\kappa,R}\bigg(\int_{B_{
{
R}}}{ k}^{ {m}}\dx\bigg)^{\frac{i_\phi'}{{m}}}\bigg(\int_{B_{\frac R2}}\big[\phi(|\E u|)\big]^{\frac{ {m_\varrho}}{ {m_\varrho}-i_\phi'}}\dx\bigg)^{\frac{{m_\varrho}-i_\phi'}{{m_\varrho}}}
\Bigg]}.
\end{align}
Notice that the application of H\"older's inequality is legitimate, as 
${m_\varrho}>\frac{n}{\varrho}\geq i_\phi'$. Also, the last integral is finite, since $\frac{{m_\varrho}}{{m_\varrho}-i_\phi'}<\frac{n}{n-2\sigma}$.
Indeed, by \eqref{besov-emb}, the inclusion $V(\E u)\in B^{\sigma,2, \infty}_{\rm loc }(\Omega)$ implies that $V(\E u) \in L^q_{\rm loc}(\Omega)$ for every $q<\frac{2n}{n-2\sigma}$, and hence $\phi(|\E u|)\in L^{q}_{\rm loc}(\Omega)$ for every $q<\frac{n}{n-2\sigma}$. 
 Equations \eqref{aug14}--\eqref{aug11} imply that 
\begin{align}\label{I2-bound}
 \mathrm{I}_2
 &\le
 c\kappa|h|^{\varrho}\int_{B_{\frac R2}}\phi(|\E w|)\,\dx\\ \nonumber
 &+
 c_\kappa|h|^\varrho \Bigg[
 \sup_{0<|h|<\frac R2}
\int_{B_{\frac R2}}\frac{|\tau_hV(\E u)|^2}{|h|^{2\sigma}}\, \d x
 +\int_{B_{R}}\phi(|\E u|)\,\dx \Bigg]+
c_{\kappa,R}|h|^\varrho \Bigg[\bigg(\int_{B_{\frac R2}}\big[\phi(|\E u|)\big]^{\frac{  {m_\varrho}}{  {m_\varrho}-i_\phi'}}\dx\bigg)^{\frac{  {m_\varrho}-i_\phi'}{  {m_\varrho}}}
\\ \nonumber & \qquad \qquad +  \bigg(\int_{B_{R}}  {k}^{  {m}}\dx\bigg)^{\frac{i_\phi'}{  {m}}}\bigg(\int_{B_{\frac R2}}\big[\phi(|\E u|)\big]^{\frac{  {m_\varrho}}{  {m_\varrho}-i_\phi'}}\dx\bigg)^{\frac{  {m_\varrho}-i_\phi'}{  {m_\varrho}}}
\Bigg].
\end{align}
As far as the term $\mathrm{I}_3$ is concerned, since $\|\nabla\zeta\|_{L^\infty}\le \frac{c}{R}$, by Young's inequality and \eqref{est-diff-quotient}, one has that 
\begin{align}\label{Itre}
    \mathrm{I_3}&\le 
\frac{c|h|}{R}\int_{B_{\frac R2}}
  \phi'(|\E u|)\,|\E w| \,\dx
\le
c|h|\int_{B_{\frac R2}}
 \Big(
 c_\kappa\widetilde \phi\big(\phi'(|\E u|)/R\big) +\kappa\phi(|\E w|)\Big)\,\dx
\\ \nonumber &\le
c_\kappa |h|{\left(\frac{1}{R^{s_\phi'}}+\frac{1}{R^{i_\phi'}}\right)}\int_{B_{\frac R2}}\phi(|\E u|)\,\dx + c\kappa|h|\int_{B_{\frac R2}}
\phi(|\E w|)\,\dx.
\end{align}
The term $\mathrm{I}_4$ can be bounded  similarly to $\mathrm{I}_3$ as follows: 
\begin{align}\label{Iquattro}
    \mathrm{I_4}&\le 
\frac{c|h|}{R}\int_{B_{\frac R4}}
  \phi'(|\E u|)\,\frac{|\tau_{-h} w|}{|h|} \,\dx
\le
c|h|\int_{B_{\frac R4}}
\left[c_\kappa\widetilde\phi\big(\phi'(|\E u|/R)\big) + \kappa\phi\left(\frac{|\tau_{-h} w|}{|h|}\right)\right]\,\dx
\\ \nonumber
&\le
c_\kappa |h|{\left(\frac{1}{R^{s_\phi'}}+\frac{1}{R^{i_\phi'}}\right)}\int_{B_{\frac R2}}
 \phi(|\E u|) 
+   c\kappa|h|\int_{B_{\frac R2}}\phi\left(|{\nabla} w|\right) 
\,\dx.
\end{align}
 \\Exploiting the 
 estimates for $\mathrm{I}_i$, $i=1, \dots 4$, established above  to bound the rightmost side of~\eqref{pre-I-bound} yields:
\begin{align}\label{aug20}
  \mathrm{I}
 &\le c\kappa|h|^\varrho\int_{B_{\frac R2}}\phi(|\nabla w|)|)\,\dx
 \\ \nonumber &+
  c_\kappa|h|^\varrho\Bigg[\sup_{0<|h|<\frac R2}
\int_{B_{\frac R2}}\frac{|\tau_hV(\E u)|^2}{|h|^{2\sigma}}\, \d x +\bigg(\int_{B_{\frac R2}}(\psi(|f|)+1)\,\dx\bigg)^{1+\frac{({1-\theta}) i_\phi'}{n}}
  +
\bigg(\int_{B_{\frac R2}}(\psi(|f|)+1)\,\dx\bigg)^{1+\frac{({1-\theta}) s_\phi'}{n}}\Bigg]
\\ \nonumber
& +  c_{\kappa,R}|h|^\varrho\Bigg[
 \int_{B_{R}}\phi(|\E u|)\,\dx +
\bigg(\int_{B_{\frac R2}}\big[\phi(|\E u|)\big]^{\frac{{m_\varrho}}{{m_\varrho}-i_\phi'}}\dx\bigg)^{\frac{{m_\varrho}-i_\phi'}{{m_\varrho}}}
\\ \nonumber & \qquad \qquad \qquad \qquad \qquad \qquad +  \bigg(\int_{B_{R}}{k}^{{m}}\dx\bigg)^{\frac{i_\phi'}{  {m}}}\bigg(\int_{B_{\frac R2}}\big[\phi(|\E u|)\big]^{\frac{  {m_\varrho}}{  {m_\varrho}-i_\phi'}}\dx\bigg)^{\frac{  {m_\varrho}-i_\phi'}{  {m_\varrho}}}\Bigg].
\end{align}
 Here, we have also made use of the fact $\varrho\le \theta\le 1$.
Owing to Young's inequality,  the  inequalities \eqref{est-diff-quotient}   and \eqref{homogeneity-phi-star}, and  the fact that $|h|\leq 1$, the term $\mathrm{II}$ admits the following bound:
\begin{align}\label{aug21}
  \mathrm{II}
  &\le
  c|h|\int_{B_{\frac R4}}\frac{|\tau_{-h}w|}{|h|}\frac{|\pi|}{R}\,\dx \le
  |h|\bigg(
\kappa\int_{B_{\frac R4}}\phi\bigg(\frac{|\tau_{-h}w|}{|h|}\bigg)\,\dx
    +
    c_\kappa\int_{B_{\frac R4}}\widetilde \phi\bigg(\frac{|\pi|}{R}\bigg)\,\dx
    \bigg)\\
 \nonumber  &\le
    |h|^{\varrho} \bigg(
    \kappa\int_{B_{\frac R4}}\phi(|\nabla w|)\,\dx
    +
    {c_\kappa\left(\frac{1}{R^{s_\phi'}}+\frac{1}{R^{i_\phi'}}\right)}
    \int_{B_{\frac R4}}\widetilde \phi(|\pi|)\,\dx
    \bigg).
\end{align}
Now, combine Equations \eqref{eta-pi-dual}, \eqref{aug20}, \eqref{aug21}, and make use of 
the inequality ~\eqref{Bogw} to estimate the integrals involving $w$ to deduce that 
\begin{align}\label{pressure-1}
  \bigg|\int_{B_{{\frac 34R}}}&\frac{\tau_h(\zeta\pi)}{|h|^\varrho}\varphi\,\dx\bigg|
\le
   c\kappa\int_{B_{\frac R2}}\phi(|\varphi|)\,\dx 
   \\ \nonumber & + 
   c_\kappa \Bigg[\sup_{0<|h|<\frac R2}
\int_{B_{\frac R2}}\frac{|\tau_hV(\E u)|^2}{|h|^{2\sigma}}\, \d x +\bigg(\int_{B_{\frac R2}}(\psi(|f|)+1)\,\dx\bigg)^{1+\frac{({1-\theta}) i_\phi'}{n}}
 +
\bigg(\int_{B_{\frac R2}}(\psi(|f|)+1)\,\dx\bigg)^{1+\frac{({1-\theta}) s_\phi'}{n}}\Bigg]
\\ \nonumber
& +  c_{\kappa,R} \Bigg[
 \int_{B_{R}}\phi(|\E u|)\,\dx +
\bigg(\int_{B_{\frac R2}}\big[\phi(|\E u|)\big]^{\frac{{m_\varrho}}{{m_\varrho}-i_\phi'}}\dx\bigg)^{\frac{{m_\varrho}-i_\phi'}{{m_\varrho}}}
\\ \nonumber & \qquad \qquad \qquad \qquad \qquad \qquad +  \bigg(\int_{B_{R}}{k}^{{m}}\dx\bigg)^{\frac{i_\phi'}{  {m}}}\bigg(\int_{B_{\frac R2}}\big[\phi(|\E u|)\big]^{\frac{  {m_\varrho}}{  {m_\varrho}-i_\phi'}}\dx\bigg)^{\frac{  {m_\varrho}-i_\phi'}{  {m_\varrho}}} +\int_{B_{\frac R2}}\widetilde \phi(|\pi|)\,\dx\Bigg]
\end{align}
for every $\varphi\in L^{\phi}(B_{\frac R2})$.
 Choose
\begin{equation}\label{aug23}
  \varphi=\frac{\xi}{|\xi|^2}\widetilde \phi\big(|\xi|\big),
  \quad\mbox{where } \,\,\,\xi=\frac{\tau_h(\zeta\pi)}{|h|^\varrho}.
\end{equation}
We claim that $\varphi\in L^\phi(B_{\frac R2})$, and hence 
 $\varphi$
is an admissible test function in~\eqref{pressure-1}.
Indeed,
\begin{equation*}
  \phi(|\varphi|)
  =
\phi\bigg(\frac{\widetilde \phi\big(|\xi|\big)}{|\xi|}
  \bigg)
  \le \widetilde 
 \phi\big(|\xi|\big),
\end{equation*}
where the inequality holds thanks to \eqref{complementary-N-functions}, after exchanging the roles of $\phi$ and $\widetilde \phi$. Since Equation \eqref{pressure-1} ensures that
$\xi\in
L^{\widetilde \phi}(B_{\frac R2})$, our claim follows.  
By our choice of $\varphi$, the inequality \eqref{pressure-1}
implies that
\begin{align}\label{oct10'}
   \int_{B_{\frac 34R}}&\widetilde \phi\bigg(\frac{|\tau_h(\zeta\pi)|}{|h|^\varrho}\bigg)\dx
\le
   c\kappa\int_{B_{\frac R2}}\widetilde \phi\bigg(\frac{|\tau_h(\zeta\pi)|}{|h|^\varrho}\bigg)\dx
   \\ \nonumber & + 
   c_\kappa \Bigg[\sup_{0<|h|<\frac R2}
\int_{B_{\frac R2}}\frac{|\tau_hV(\E u)|^2}{|h|^{2\sigma}}\, \d x +\bigg(\int_{B_{\frac R2}}(\psi(|f|)+1)\,\dx\bigg)^{1+\frac{({1-\theta}) i_\phi'}{n}}
 +
\bigg(\int_{B_{\frac R2}}(\psi(|f|)+1)\,\dx\bigg)^{1+\frac{({1-\theta}) s_\phi'}{n}}\Bigg]
\\ \nonumber
& +  c_{\kappa,R} \Bigg[
 \int_{B_{R}}\phi(|\E u|)\,\dx +
\bigg(\int_{B_{\frac R2}}\big[\phi(|\E u|)\big]^{\frac{{m_\varrho}}{{m_\varrho}-i_\phi'}}\dx\bigg)^{\frac{{m_\varrho}-i_\phi'}{{m_\varrho}}}
\\ \nonumber & \qquad \qquad \qquad \qquad \qquad \qquad +  \bigg(\int_{B_{R}}{k}^{{m}}\dx\bigg)^{\frac{i_\phi'}{  {m}}}\bigg(\int_{B_{\frac R2}}\big[\phi(|\E u|)\big]^{\frac{  {m_\varrho}}{  {m_\varrho}-i_\phi'}}\dx\bigg)^{\frac{  {m_\varrho}-i_\phi'}{  {m_\varrho}}} +\int_{B_{\frac R2}}\widetilde \phi(|\pi|)\,\dx\Bigg].
\end{align}
Observe that $\tau_h(\zeta\pi)(x)=\tau_h\pi(x)$ if $x\in B_{\frac {R}{16}}$  and
$|h|\le \frac{R}{16}$. 
Hence, by setting $h_1=\min\{1, \frac R{16}\}$ and $\kappa=\frac1{2c}$, and using the estimate \eqref{aug5}
one obtains:
\begin{align}\label{aug25}
    \sup_{|h|\leq h_1}&\int_{B_{{\frac{R}{16}}}
    }\widetilde \phi\bigg(\frac{|\tau_h\pi|}{|h|^\varrho}\bigg)\dx 
  \le
    \sup_{|h|\leq h_1}\int_{B_{\frac 34R}}\widetilde \phi\bigg(\frac{|\tau_h(\zeta\pi)|}{|h|^\varrho}\bigg)\dx
   \\ \nonumber &
\le
  {c_R}
    \bigg(1+
       \bigg(\int_{B_{ {{\frac 32 R}}}}{k}^{{m}}(x)\,\dx\bigg)^{{\frac{2m_\sigma}{{m(m_\sigma}-\frac{n}{\beta})}}}  \bigg)  \int_{B_{\frac{3}{2}R}}\phi(|\nabla u|)\,\dx
       \\ \nonumber &
       \quad + c_R \bigg(1+  \bigg(\int_{B_R}{k}^{{m}}(x)\,\dx\bigg)^{\frac{i_\phi'}{{m}}}\bigg)\bigg(\int_{B_{\frac R2}}\big[\phi(|\E u|)\big]^{\frac{  {m_\varrho}}{  {m_\varrho}-i_\phi'}}\dx\bigg)^{\frac{  {m_\varrho}-i_\phi'}{  {m_\varrho}}}  
       \\\nonumber 
      &\quad+ c_R\int_{B_{\frac R2}}\widetilde \phi(|\pi|)\,\dx
   +c
{\bigg(\int_{B_{\frac 32R}}(\psi(|f|)+1)\,\dx\bigg)^{1+\frac{({1-\theta}) i_\phi'}{n}}
+c\bigg(\int_{B_{\frac 32R}}(\psi(|f|)+1)\,\dx\bigg)^{1+\frac{({1-\theta}) s_\phi'}{n}}
       }.
\end{align}
 Consider a covering of $B_{\frac 34R}$ by balls of radius $\frac R{32}$,  apply the inequality \eqref{aug25} with $R$ replaced with $\frac R2$ to each of these balls and sum the resultant inequalities. So doing, one obtains an inequality like \eqref{aug25}, with $\frac R{16}$ replaced with $\frac 34 R$ in the integral on the leftmost side, and $h_1= \min\{1, \frac R{32}\}$.
Finally, if $h_1\leq |h|\leq \frac34 R$, then, by \eqref{homogeneity-phi-star} and \eqref{est-translation},
\begin{align}
    \label{oct14}
   \int_{B_{\frac 34R}}\widetilde \phi\bigg(\frac{|\tau_h\pi|}{|h|^\varrho}\bigg)\dx
   \leq 
   c\bigg(\frac 1{R^{\varrho i_\phi'}}+1\bigg)\int_{B_{\frac 34R}}\widetilde \phi\big(|\tau_h\pi|\big)\dx \leq c\bigg(\frac 1{R^{\varrho i_\phi'}}+1\bigg)\int_{B_{\frac 32R}}\widetilde \phi(|\pi|)\dx. 
\end{align}
 Thus,  the supremum in \eqref{aug25} can be extended to every $h$ such that $|h|\le \frac 34R$, provided that the integral of $\widetilde\phi(|\pi|)$ on the rightmost side is extended over $B_{\frac 32R}$. A replacement of $R$ with $\frac 43R$
in the resultant inequality shows that
\begin{align}
    \label{boundpi'}
    \sup_{|h|<R}    \int_{B_{R}}\widetilde \phi\bigg(\frac{|\tau_h\pi|}{|h|^\varrho}\bigg)\, \d x&\le
 c_1 
\bigg(\int_{B_{2R}}\psi(|f|)\,\dx\bigg)^\varkappa  +
c_2 \Bigg[\bigg(\int_{B_{2R}}\phi(|\nabla u|)^\varsigma\,\dx\bigg)^{\frac 1\varsigma} 
 + \int_{B_{2R}}\widetilde\phi(|\pi|)\,\dx\Bigg]
   +c_3
\end{align}
for some constants $c_1$, $c_2$ and $c_3$ depending on the same quantities as in Equation \eqref{boundpi}. Denote by $\Lambda$ the right-hand side of \eqref{boundpi'}. Owing to the property \eqref{{kt}}, we deduce from \eqref{boundpi} that
\begin{align}
        \label{boundpi''}
    \sup_{|h|<R}    \int_{B_{R}}\widetilde \phi\bigg(\frac{|\tau_h\pi|}{(\Lambda +1)|h|^\varrho}\bigg)\, \d x
    \leq \frac{1}{\Lambda +1} \sup_{|h|<R}    \int_{B_{R}}\widetilde \phi\bigg(\frac{|\tau_h\pi|}{|h|^\varrho}\bigg)\, \d x \leq 1.
\end{align}
Hence, the inequality 
\eqref{boundpi} follows from \eqref{boundpi'}, with $c_3$ replaced with $c_3+1$. The inclusion  \eqref{reg-pi} is just a consequence of \eqref{boundpi}.
\end{proof}

 \par\noindent {\bf Data availability statement.} Data sharing not applicable to this article as no datasets were generated or analysed during the current study.

\section*{Compliance with Ethical Standards}\label{conflicts}

\smallskip
\par\noindent
{\bf Funding}. This research was partly funded by:
\\ (i) GNAMPA   of the Italian INdAM - National Institute of High Mathematics (grant number not available)  (A.Cianchi);
\\ (ii) Research Project   of the Italian Ministry of Education, University and
Research (MIUR) Prin 2022 ``Partial differential equations and related geometric-functional inequalities'',
grant number 20229M52AS, cofunded by PNRR (A.Cianchi);
\\ (iii)  GNAMPA Project 2025, grant number E5324001950001, ``Regolarità di soluzioni di equazioni paraboliche a crescita nonstandard degeneri'' (F.Giannetti, A.Passarelli di Napoli);
\\(iv)   Centro Nazionale per la Mobilità Sostenibile (CN00000023) - Spoke 10 Logistica Merci, grant number E63C22000930007,  funded by PNRR (A.Passarelli di Napoli).

\bigskip
\par\noindent
{\bf Conflict of Interest}. The authors declare that they have no conflict of interest.

\end{document}